\definecolor{dblue}{rgb}{0,0,.6}
\numberwithin{equation}{section}
\newtheorem{theorem}{Theorem}[section]
\theoremstyle{plain} 
\newtheorem{corollary}[theorem]{Corollary}
\newtheorem{definition}[theorem]{Definition}
\newtheorem{lemma}[theorem]{Lemma}
\newtheorem{proposition}[theorem]{Proposition}
\theoremstyle{definition} 
\newtheorem{example}[theorem]{Example}
\newtheorem{remark}[theorem]{Remark}
\newcommand{\Z}{\mathbb Z}
\newcommand{\A}{\mathbb A}
\newcommand{\C}{\mathbb C}
\newcommand{\N}{\mathbb N}
\newcommand{\CP}{\mathbb P}
\newcommand{\Tor}{\operatorname{Tor}}
\newcommand{\coker}{\operatorname{coker}}
\newcommand{\im}{\operatorname{im}}
\newcommand{\id}{\operatorname{id}}
\newcommand{\Spec}{\operatorname{Spec}}
\newcommand{\pr}{\operatorname{pr}}
\newcommand{\SING}{\operatorname{Sing}}
\newcommand{\CH}{\operatorname{CH}}
\newcommand{\supp}{\operatorname{supp}}
\newcommand{\sing}{\operatorname{sing}}
\newcommand{\Frac}{\operatorname{Frac}}
 \newcommand{\sm}{\operatorname{sm}}
\newcommand{\specialization}{\operatorname{sp}}
\newcommand{\dashedlongrightarrow}{\xymatrix@1@=15pt{\ar@{-->}[r]&}}
\renewcommand{\longrightarrow}{\xymatrix@1@=15pt{\ar[r]&}}
\renewcommand{\mapsto}{\xymatrix@1@=15pt{\ar@{|->}[r]&}}
\renewcommand{\twoheadrightarrow}{\xymatrix@1@=15pt{\ar@{->>}[r]&}}
\newcommand{\hooklongrightarrow}{\xymatrix@1@=15pt{\ar@{^(->}[r]&}}
\newcommand{\congpf}{\xymatrix@1@=15pt{\ar[r]^-\sim&}}
\renewcommand{\cong}{\simeq}
\begin{document}   
\title[On the rationality problem for low degree hypersurfaces]{On the rationality problem for low degree hypersurfaces}

\author{Jan Lange} 
\address{Institute of Algebraic Geometry, Leibniz University Hannover, Welfengarten 1, 30167 Hannover, Germany.}
\email{lange@math.uni-hannover.de}

\author{Stefan Schreieder} 
\address{Institute of Algebraic Geometry, Leibniz University Hannover, Welfengarten 1, 30167 Hannover, Germany.}
\email{schreieder@math.uni-hannover.de}

\date{\today}
\subjclass[2020]{primary 14J70, 14E08; secondary 14M20, 14C25} 
%

\keywords{Hypersurfaces, Rationality Problem, Retract Rationality, Decomposition of the Diagonal.}

\begin{abstract}
We show that a very general hypersurface of degree $d \geq 4$ and dimension $N \leq (d+1)2^{d-4}$ over a field of characteristic $\neq 2$ does not admit a decomposition of the diagonal; hence, it is neither stably nor retract rational, nor $\mathbb{A}^1$-connected.  
    Similar results hold in characteristic $2$ under a slightly weaker degree bound.
    This improves earlier results in \cite{Sch-JAMS} and \cite{Moe}. 
\end{abstract}

\maketitle

\section{Introduction}

A variety $X$ over a field $k$ is retract rational if there is some integer $N\geq \dim X$ and rational maps $f\colon X\dashrightarrow \CP^N$ and $g\colon \CP^N\dashrightarrow X$ such that the composition $g\circ f$ is defined and agrees with $ \id_X$.
This notion is a direct analogue of retracts in topology; it was introduced into birational geometry by Saltman \cite{saltman,saltman-2} in the 1980s. 
Equivalently, $X$ is retract rational if there are an integer $N\geq \dim X$, open dense subsets $U\subset X$ and $V\subset \CP^N$, and a morphism $V\to U$ which is surjective on $L$-rational points for all field extensions $L/k$---this closely relates the notion to the classical question of parametrizing solutions of polynomial equations by rational functions. 
Rational or stably rational varieties are retract rational.

By the work of Asok--Morel \cite[Theorem 2.3.6]{asok-morel} and Kahn--Sujatha \cite[Theorem 8.5.1 and Proposition 8.6.2]{kahn-sujatha}, a smooth proper retract rational variety $X$ is $\mathbb{A}^1$-connected in the sense of $\mathbb{A}^1$-homotopy theory, i.e.\ $\pi_0^{\mathbb{A}^1}(X) = \{\ast \}$. 
By \cite[Remark 2.4.8]{asok-morel}, $\mathbb{A}^1$-connectedness is equivalent to separable $R$-triviality, which means that $X(L)/R=\{\ast\}$ for any separable field extension $L/k$, where $R$ denotes the equivalence relation on $X(L)$ generated by $x\sim y$ whenever $x,y\in X(L)$  lie on the same rational curve (defined over $L$).
In other words, $\mathbb{A}^1$-connectedness provides an arithmetic analogue of rational chain connectedness, 
which requires that any two $L$-rational points can be connected by a chain of rational curves for any algebraically closed field extension $L$ of $k$.
The following table illustrates the known implications for smooth proper varieties:
 $$
 \xymatrix{
\text{rational} \ar@{=>}[r]&   \text{stably rational}    \ar@{=>}[r]&  \text{retract rational} \ar@{=>}[d] 
 \ar@{=>}[r] &  \text{$\A^1$-connected}  \ar@{=>}[d] 
 \\ 
& & \text{unirational} \ar@{=>}[r] & \text{rationally chain connected}
 }
 $$
 The rationality problem for a given rationally connected variety $X$ asks `how rational it is', that is, which of the properties in the above diagram are satisfied.

Not every unirational variety is $\A^1$-connected \cite{artin-mumford} and not every stably rational variety is rational \cite{BCTSS}.
Moreover, there are retract rational varieties (over non-closed fields) that are not stably rational, see e.g.\ \cite[Theorem 1.5 and Theorem 2.3]{endo-miyata};
it is an open problem to produce such examples over algebraically closed fields. 
Whether any $\A^1$-connected variety is retract rational is open over any field. 

A smooth proper $\A^1$-connected variety  with a $k$-rational point has universally trivial Chow group of zero-cycles and hence admits a decomposition of the diagonal \cite{bloch-srinivas}, which is an interesting motivic and cycle-theoretic property in itself.

\subsection{Hypersurfaces}
A particularly interesting class of varieties for the rationality problem are smooth projective hypersurfaces $X\subset \CP^{N+1}_k$ of degree $d$ and dimension $N$ over a field $k$. 
The interesting range for the problem is when $d \leq N + 1$, in which case $X$ is Fano and thus rationally chain connected by \cite{campana,KMM}.

If $2^{d!}\leq N+1$ and $k=\C$, then $X$ is unirational, see \cite{BR,HMP}.  
If $d=N+1$ and $k=\C$, then $X$ is irrational (in fact birationally rigid) by a theorem of de Fernex \cite{deF1,deF2}, 
which extends earlier results by Iskovskikh--Manin \cite{IM} and Pukhlikov \cite{Pu1,Pu2}.
If $k=\C$ and $X\subset \CP^{N+1}_\C$ is very general of degree $d\geq 2\lceil \frac{N+3}{3} \rceil$, then it is not ruled and hence not rational by a theorem of Koll\'ar \cite{kollar}.
Under the slightly weaker bound $d\geq 2\lceil \frac{N+2}{3}\rceil$, Totaro \cite{totaro} showed that such hypersurfaces do not admit a decomposition of the diagonal, hence are neither stably nor retract rational, nor $\A^1$-connected. 
This used \cite{voisin,CTP}.
Totaro's result was improved in \cite{Sch-JAMS}, where the same result under the logarithmic bound $d\geq \log_2N+2$, $N\geq 3$ and over fields of characteristic $\neq 2$ was proven; a similar bound holds in characteristic 2 by \cite{Sch-torsion}.

The logarithmic degree bound in \cite{Sch-JAMS} is equivalent to $N\leq 2^{d-2}$. 
 In the case of stable rationality over fields of characteristic zero, Moe \cite{Moe} used the methods from \cite{NS,KT,NO} to improve this logarithmic bound  by a factor $(d+1)/4$ to cover the cases $N\leq (d+1)2^{d-4}$.
This paper generalizes Moe's result as follows:

\begin{theorem}\label{thm:main-irrationality-intro}
    Let $k$ be a field of characteristic different from $2$.
    Then a very general hypersurface $X\subset \CP^{N+1}_k$ of degree $d\geq 4$ and dimension $N\leq (d+1)2^{d-4}$ does not admit a decomposition of the diagonal, hence is neither stably nor retract rational, nor $\A^1$-connected.
\end{theorem}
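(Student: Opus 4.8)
The plan is to upgrade the stable-irrationality argument of \cite{Moe} to a statement about the decomposition of the diagonal, by replacing its motivic (Grothendieck-ring) obstruction with the cohomological obstruction of \cite{Sch-JAMS}, while keeping the efficient degenerations that produce the bound $(d+1)2^{d-4}$. Concretely, by spreading out and a specialization argument, if the very general hypersurface of degree $d$ and dimension $N$ over $k$ admitted a decomposition of the diagonal, then so would a suitable closed fibre $X_0$ of a projective family whose generic fibre is that hypersurface. Here one uses that ``admitting a Chow-theoretic decomposition of the diagonal'' specializes through degenerations $X_0$ which need only carry a resolution $\widetilde X_0 \to X_0$ meeting the weak requirement of \cite{Sch-JAMS} --- controlling the exceptional locus only generically over the singular locus of $X_0$ --- rather than a globally universally $\CH_0$-trivial resolution, which our singular $X_0$ will not admit. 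It therefore suffices to produce, for each $N$ in the stated range, one such $X_0$ carrying a class $0 \neq \alpha \in H^i_{\mathrm{nr}}(k(X_0)/k, \mu_\ell^{\otimes j})$ that is not pulled back from $H^i(k, \mu_\ell^{\otimes j})$: such an $\alpha$ obstructs a decomposition of the diagonal of $X_0$, hence of the very general hypersurface. Here $\ell$ is any prime with $\ell \neq \operatorname{char} k$; that $\ell = 2$ is unavailable in characteristic $2$ is exactly what forces the weaker bound there.

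\textbf{Construction of $X_0$ and of $\alpha$.} We build $X_0$ by an inductive degeneration in the spirit of \cite{Sch-JAMS}, made more efficient using the tropical/toric degeneration techniques of \cite{NO} (building on \cite{NS,KT}) as exploited in \cite{Moe}. The base case is a quadric (or conic) bundle over a projective space whose discriminant is chosen so that a suitable unramified cohomology group is nontrivial --- an Artin--Mumford-type class \cite{artin-mumford} --- which is precisely where $\operatorname{char} k \neq 2$ enters. In the inductive step one degenerates the very general degree-$d$, dimension-$N$ hypersurface to a variety $X_0$ carrying a fibration (over $\CP^1$, or more efficiently over a higher-dimensional toric base attached to a well-chosen regular subdivision as in the tropical method) whose generic fibre is governed by the degree-$(d-1)$ problem; propagating the nonzero class of the previous step along this fibration, and checking that it remains unramified on a resolution of $X_0$, gains a factor slightly larger than $2$ on the admissible $N$ at each of the $d-2$ steps. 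A bookkeeping of these factors through the combinatorics produces precisely $N \leq (d+1)2^{d-4}$. In characteristic $2$ the same scheme runs with a prime $\ell \neq 2$ but from a less favourable base case, which gives the stated weaker bound.

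\textbf{The main obstacle.} The crux is propagating $\alpha$ through the efficient \emph{but singular} degenerations. The argument of \cite{Moe} only tracks a class in $K_0(\mathrm{Var}_k)/(\mathbb{L})$, which is insensitive to the singularities of the central fibre beyond stable birational type; to track a genuine unramified class we must control the exceptional loci of the resolutions introduced at each step, verify the generic-point condition of \cite{Sch-JAMS} there, and compute the residues of $\alpha$ along every exceptional divisor, uniformly in $d$. Reconciling the flexibility of the tropical degenerations (which is what buys the factor $(d+1)/4$ over \cite{Sch-JAMS}) with this cohomological bookkeeping is the technical heart of the argument. Granting this, the passage from the non-existence of a decomposition of the diagonal to the failure of stable rationality, retract rationality and $\A^1$-connectedness is standard, as recalled in the introduction.
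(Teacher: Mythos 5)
Your proposal is not the paper's argument, and the step you flag as ``the technical heart'' and then wave through with ``granting this'' is precisely the step the paper's authors say they do \emph{not} know how to do. The introduction of the paper is explicit on this point: the tropical/toric degenerations of \cite{NO,Moe} have total spaces that are not strictly semi-stable, the special fibres have many components whose Chow groups are inaccessible, and the authors write that they ``do not see how to apply our obstruction \ldots systematically'' to those examples. So the plan of taking Moe's degenerations as is and propagating an unramified class $\alpha$ through them, controlling exceptional loci as in \cite{Sch-JAMS}, is not a proof sketch with a bookkeeping gap — it is the open problem that motivated the paper.

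What the paper actually does is structurally different in several ways. First, the obstruction is not an unramified cohomology class living on a single singular hypersurface $X_0$; it is the failure of surjectivity of a cycle-theoretic map
\[
\Psi_{Y_L^\circ}\colon \CH_1(Y_0^\circ\times_k L)\oplus \CH_1(Y_1^\circ\times_k L)\longrightarrow \CH_0(Z^\circ\times_k L),
\]
attached to a degeneration whose special fibre is a union of \emph{two rational} components $Y_0,Y_1$ meeting along a lower-dimensional hypersurface $Z$; the obstruction sits in the double intersection, not in a component, which is why none of the components needs to carry a nontrivial unramified class. Second, the key technical device is the notion of a decomposition of the diagonal \emph{relative to a closed subset} $W\subset X$ (Definition~\ref{def:decomposition-relative-to-W}) and the relative torsion order $\Tor^\Lambda(X,W)$; this is what lets the argument tolerate the (very singular, non-toric) total spaces. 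Third, the induction fixes the degree $d$ and raises the dimension $N$ by one at each step via the explicit ``double cone'' degeneration of Section~\ref{section:doublecone}; your description of an induction on degree via a degree-$(d-1)$ sub-problem matches neither the paper nor \cite{Moe}. Fourth, unramified cohomology enters only at the base case (Theorem~\ref{thm:InductionStart}, taken from \cite{Sch-torsion}), where the hypotheses on the characteristic ($\operatorname{char} k \neq 2$, or $\neq 3$) come from; it is not tracked through the induction at all. Finally, the paper's degenerations are explicitly not toric — the authors note that the methods of \cite{KT,NS,NO,Moe} do not apply to them even in characteristic zero — so the ``reconciling the flexibility of the tropical degenerations'' framing does not describe the route the paper takes.

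In short: you have correctly identified \emph{why} Moe's result does not immediately upgrade to retract irrationality (the motivic invariant is blind to the singularities of the central fibre, and controlling a genuine unramified class through those degenerations is hard), but the proposal offers no mechanism to overcome this, and the paper's solution bypasses it entirely by switching obstruction (from $H^\ast_{\mathrm{nr}}$ on components to $\coker\Psi$ on double intersections), switching degenerations (from tropical/toric to the double cone construction), and introducing the relative torsion order to handle the resulting non-proper/singular setting.
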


While stable irrationality in characteristic zero follows in the above degree range from \cite[Theorem 5.2]{Moe}, the assertion on retract rationality and $\A^1$-connectedness are new.
In positive characteristic, for all $ N\leq (d+1)2^{d-4}$ not covered by \cite{Sch-JAMS}, even rationality was previously open.
For fixed degree $d$, a proportion of roughly $\frac{d-3}{d+1}$ cases are new.
The first new case concerns quintics of dimension $N=10$.

By a very general hypersurface $X$ over a field $k$ we mean one where the coefficients of a defining equation are algebraically independent over the prime field. 
With this definition, very general hypersurfaces exist over any field (not necessarily uncountable) of sufficiently large transcendence degree over the prime field, cf.\ Lemma \ref{lem:very-general} below.

In characteristic 2, we obtain an analogous result under a slightly weaker bound:

\begin{theorem}\label{thm:main-irrationality-intro-char2}
    Let $k$ be a field of characteristic $2$.
    Then a very general hypersurface $X\subset \CP^{N+1}_k$ of degree $d\geq 5$ and dimension $N\leq \frac{d+1}{3}2^{d-4}$ does not admit a decomposition of the diagonal, hence is neither stably nor retract rational, nor $\mathbb{A}^1$-connected.
\end{theorem}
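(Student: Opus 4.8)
The plan is to run the degeneration method for decompositions of the diagonal of \cite{Sch-JAMS}, in its positive-characteristic form \cite{Sch-torsion}, while feeding into it the more efficient degenerations of hypersurfaces introduced by Moe \cite{Moe} (following \cite{NS, KT, NO}). First I would reduce the theorem to the construction of a single, very special hypersurface. Since the coefficients of a very general hypersurface $X$ are algebraically independent over the prime field, $X$ occurs, in a suitable one-parameter family, as the generic fibre of a flat projective family whose special fibre is any prescribed hypersurface of the same degree and dimension over $\overline{\F}_2$; and since a decomposition of the diagonal specialises along such a family provided the special fibre admits a universally $\CH_0$-trivial resolution (see \cite{CTP, totaro, Sch-torsion}), it suffices to exhibit, for every $d \geq 5$ and every $N \leq \tfrac{d+1}{3}2^{d-4}$, a hypersurface $X_0 \subset \CP^{N+1}$ of degree $d$ over $\overline{\F}_2$ — the algebraic closure suffices, since the class we construct survives extension of the ground field — such that $X_0$ has only mild, explicitly resolvable singularities admitting a universally $\CH_0$-trivial resolution $\widetilde{X}_0 \to X_0$, and $\widetilde{X}_0$ admits no decomposition of the diagonal.

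To build such an $X_0$ I would perform an explicit multi-step degeneration of the generic hypersurface of degree $d$, in the spirit of the toric (tropical) degenerations of \cite{NO} and \cite{Moe}, arriving at a hypersurface birational to a quadric bundle $\pi \colon Q \to B$ — or to a short tower of such bundles — over a rational base $B$ birational to $\CP^n$, whose discriminant is forced to have a prescribed, highly reducible shape. In characteristic $2$ this fibrewise geometry must be expressed through the theory of quadratic forms in characteristic $2$: the relevant invariants of the generic fibre of $\pi$ are its Arf invariant, an Artin--Schreier class in $H^1(\overline{\F}_2(B), \Z/2)$, and its Clifford invariant in $\Br(\overline{\F}_2(B))[2]$. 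Accordingly, the obstruction to a decomposition of the diagonal is detected not in \'etale cohomology with $\Z/2$-coefficients, as in Theorem \ref{thm:main-irrationality-intro}, but in the Kato complex of $\widetilde{X}_0$ with coefficients in the logarithmic de Rham--Witt sheaves $\nu(n) = W_1\Omega^n_{\log}$, as developed in \cite{Sch-torsion}.

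With $X_0$ in hand, I would write down an explicit class in $H^n_{\mathrm{nr}}(\widetilde{X}_0, \nu(n))$ — a length-$n$ symbol built from the logarithmic classes of the branch divisors of the successive double covers occurring in the degeneration, together with an Artin--Schreier contribution from the Arf invariant of $\pi$ — and verify that it is unramified at every divisor of $\widetilde{X}_0$ and nonzero. Since unramified cohomology is a birational invariant of smooth proper varieties, this can be checked on any smooth proper model of $Q$, to which the characteristic-$2$ analogue of Schreieder's nonvanishing criterion for the unramified cohomology of quadric bundles applies; the nonvanishing itself would be obtained by restricting the symbol to a judiciously chosen subvariety on which it survives. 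By the characteristic-$2$ form of the Bloch--Srinivas/Merkurjev argument (see \cite{bloch-srinivas, Sch-torsion}), a nonzero unramified class of this kind is incompatible with a decomposition of the diagonal of $\widetilde{X}_0$, and with the reduction above this yields Theorem \ref{thm:main-irrationality-intro-char2}.

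The numerical bound comes out of optimising the degeneration, by an induction on $d$: at each stage one peels off a quadric fibration, which, as in \cite{Sch-JAMS}, roughly doubles the admissible relative dimension per unit increase of $d$ and recovers the logarithmic bound $N \leq 2^{d-2}$, whereas the more efficient toric-type degenerations of \cite{NO} exploited by Moe \cite{Moe} yield a further linear-in-$d$ factor, giving $N \leq (d+1)2^{d-4}$ in odd characteristic and $N \leq \tfrac{d+1}{3}2^{d-4}$ in characteristic $2$. The main obstacle — and the reason characteristic $2$ costs a factor $3$ relative to Theorem \ref{thm:main-irrationality-intro} — is to satisfy the last two requirements of the reduction simultaneously: the degeneration must be degenerate enough that the explicit symbol is genuinely unramified and nonzero, yet tame enough that the singularities of $X_0$ still admit a universally $\CH_0$-trivial resolution. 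Since resolution of singularities is unavailable in characteristic $2$ in general, the singularity types that arise must be resolved by hand (or treated via alterations, with an extra check that the obstruction is not destroyed), and the characteristic-$2$ quadratic-form bookkeeping — Arf and Clifford invariants, totally singular versus nondefective forms, Artin--Schreier in place of Kummer symbols — is precisely what forces the weaker constant.
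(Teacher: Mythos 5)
Your proposal describes a fundamentally different strategy from the paper's, and as written it would not go through. Three concrete problems.

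First, you propose to reduce the theorem to constructing a single degenerate hypersurface $X_0 \subset \CP^{N+1}_{\overline{\F}_2}$ admitting a universally $\CH_0$-trivial resolution $\widetilde X_0 \to X_0$ with a nonzero unramified class on $\widetilde X_0$. That is exactly the classical scheme of Voisin, Colliot-Th\'el\`ene--Pirutka, Totaro, \cite{Sch-JAMS}, and \cite{Sch-torsion}, and it does deliver the logarithmic bound $N \leq 2^{d-3}$ in characteristic $2$. But nobody knows how to push that strategy past the logarithmic regime: the Moe/Nicaise--Ottem bound requires degenerations whose special fibre has many components, whose Chow groups are inaccessible, and for which no workable strictly semi-stable model or universally $\CH_0$-trivial resolution has been produced. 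The paper's whole point is to \emph{avoid} this. It introduces the notion of a decomposition of the diagonal relative to a closed subset $W$ and an obstruction map $\Psi$ on the pair $(\mathcal X, W_{\mathcal X})$ (Theorem \ref{thm:Obstruction-intro} / Theorems \ref{thm:obstruction-general}, \ref{thm:obstruction-proper}), so that the singular locus of the family, the non-snc locus of the special fibre, and all but two components are thrown into $W$ and never need to be resolved or otherwise controlled. Your proposal contains none of this machinery and would run straight into the obstacle the paper was written to circumvent.

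Second, the characteristic-$2$ bookkeeping you describe (Arf invariants, Clifford invariants, Artin--Schreier symbols, logarithmic de Rham--Witt sheaves $\nu(n)$, Kato complexes) is not what is happening here, and it is not why the bound degrades by a factor of $3$. The base case (Theorem \ref{thm:InductionStart}, taken from \cite{Sch-torsion}) uses ordinary \'etale unramified cohomology $H^n_{nr}(\cdot, \mu_m^{\otimes n})$ with $m$ invertible in $k$. Over a field of characteristic $2$ one cannot take $m = 2$, so one takes $m = 3$; the Fermat--Pfister-type polynomials $c_j$ and the class from \eqref{eq:gfromSch-torsion} then produce a nontrivial $\mu_3$-symbol. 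The factor $\frac{1}{3}$ in Theorem \ref{thm:main-irrationality-intro-char2} versus Theorem \ref{thm:main-irrationality-intro} comes purely from the arithmetic bound $s \leq \sum_l \binom{n}{l} \lfloor (n-l)/m \rfloor$ (Theorem \ref{thm:main-torsion}) evaluated at $m=3$ rather than $m=2$, as made explicit by Lemma \ref{lem:bound-for2and3}; no quadratic form theory in characteristic $2$ enters anywhere.

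Third, you lean on the degenerations and optimisation of \cite{NS, KT, NO, Moe}, but as the paper notes (in the remark on Hotchkiss--Stapleton), those arguments rely on weak factorization and hence are confined to characteristic zero. The paper's inductive cycle-theoretic argument (Proposition \ref{prop:inductionstep} applied iteratively in the proof of Theorem \ref{thm:main-torsion}, starting from the base case and increasing the dimension by one through repeated double-cone degenerations, while tracking the \emph{relative} $\Z/m$-torsion order) is what makes the positive-characteristic statement possible. That inductive argument, the relative torsion orders, and the obstruction map $\Psi$ are the content your proposal is missing.
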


In characteristic 2, the logarithmic bound in \cite{Sch-torsion} is given by $N\leq 2^{d-3}$.
The above theorem improves this by a factor $(d+1)/6$; for fixed $d$, the proportion of new cases is  given by $(d-5)/(d+1)$. 

Slightly better numerical bounds than in Theorems \ref{thm:main-irrationality-intro} and \ref{thm:main-irrationality-intro-char2} can be extracted from Theorem \ref{thm:main-torsion} (see also Theorem \ref{thm:main-torsion-order-intro}) below, which is our main result.

Our arguments allow us to bound the torsion order $\Tor(X)$ of the above hypersurfaces $X\subset \CP^{N+1}_k$, i.e.\ the smallest positive integer $e$ such that
$e\cdot \Delta_X$ decomposes in the Chow group of $X\times X$ (or $e=\infty$ if no such integer exists). 
If $\Tor(X)>1$, then $X$ does not admit a decomposition of the diagonal, hence is not $\A^1$-connected.
Moreover, any dominant generically finite map $f\colon \CP^{\dim X}\dashrightarrow X$ has degree $\deg f $ divisible by $\Tor(X)$ and so the torsion order yields an interesting lower bound on the possible degrees of unirational parametrizations of $X$.

If $X\subset \CP^{N+1}_k$ is a smooth Fano hypersurface of degree $d$ over some field $k$, then $\Tor(X)$ always divides $d!$, see  \cite{roitman} and  \cite[Proposition 5.2]{CL}.
 This yields an upper bound for the possible torsion orders of Fano hypersurfaces.
We then have the following result, which improves the previously known lower bounds from \cite{CL,Sch-torsion}. 

\begin{theorem}\label{thm:main-torsion-order-intro}
    Let $k$ be a field and let $m \geq 2$ be an integer invertible in $k$. 
    Let $n \geq 2$, $r \leq 2^n-2$, and $s \leq \left(\left\lfloor\frac{n}{m}\right\rfloor - 1 \right) (2^{n-1}-1)$ be non-negative integers and write $N\coloneq  n+r+s$.
    Then the torsion order of a very general Fano hypersurface $X \subset \CP^{N+1}_k$ of degree $d \geq m + n$ is divisible by $m$.
\end{theorem}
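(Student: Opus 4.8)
The plan is to establish the divisibility of the torsion order by degenerating $X$ to a highly singular hypersurface whose resolution carries nontrivial $m$-torsion in an unramified cohomology group, and then to invoke the specialization method for the torsion order in the style of \cite{totaro,Sch-JAMS,Sch-torsion}. Concretely, by the standard specialization argument it suffices to produce a \emph{single} hypersurface $X_0 \subset \CP^{N+1}_K$ over some field $K$ (with $K$ containing $k$, or of the same characteristic) of degree $d$ and dimension $N$, together with a desingularization (or an alteration of degree prime to $m$) $\widetilde{X_0} \to X_0$, such that $\widetilde{X_0}$ has a nontrivial obstruction to the $m$-divisibility of the diagonal — for instance a nonzero class in $H^{*}_{\mathrm{nr}}(\widetilde{X_0}, \mu_m^{\otimes *})$ that survives base change to a universal domain. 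The specialization step is exactly the mechanism by which \cite{Sch-JAMS} upgraded Totaro's degeneration, so I would quote it rather than reprove it; the real content is the construction of $X_0$.

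The construction I would carry out is an iterated version of the cone/join and "double-cover" building blocks underlying the numerical bound. Write $d = m + n$ and $N = n + r + s$. Starting from a variety that already carries an $m$-torsion unramified class — the natural candidate being (a resolution of) a degree-$m$ cyclic cover branched along a generic hypersurface, which contributes the factor $\lfloor n/m \rfloor - 1$ and the $(2^{n-1}-1)$-type dimension count via the Koll\'ar-type cyclic-cover mechanism used in \cite{Sch-torsion} — I would successively take cones and generic hyperplane/quadric sections, or apply the degree-doubling trick $d \mapsto 2d$ that is responsible for the exponential factor $2^{n-2}$ and the parameter $r \le 2^n - 2$ (this is the "$\log_2 N + 2$" mechanism of \cite{Sch-JAMS}, reorganized). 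Each such operation must be checked to (i) stay within degree $d$ and dimension $N$, (ii) preserve the nonvanishing of the relevant unramified class — this uses the injectivity of unramified cohomology under the relevant geometric operations (generic hyperplane sections, projective cones, cyclic covers), together with a careful bookkeeping of how the cohomological degree of the surviving class grows, and (iii) admit a controlled resolution so that the specialization method applies. The parameters $n, r, s$ and the constraints $r \le 2^n - 2$, $s \le (\lfloor n/m\rfloor - 1)(2^{n-1}-1)$ should fall out precisely as the maximal amount of "room" these three operations allow while keeping the $m$-torsion class alive.

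The main obstacle I anticipate is (ii): controlling the unramified cohomology through the degenerations. Cones introduce singularities whose resolutions have exceptional divisors that can kill or alter unramified classes, and one needs a clean vanishing/injectivity statement — presumably a lemma saying that for a sufficiently generic cone or section the restriction/pullback map on the relevant $H^i_{\mathrm{nr}}(-,\mu_m^{\otimes j})$ is injective, and that the "interesting" class does not become a boundary. Getting the cohomological degree accounting exactly right so that the final class is unramified of the correct weight (and nonzero over a universal domain, so that it obstructs the diagonal and not merely rational equivalence over the non-closed field) is where the bulk of the technical work lies; everything else (the numerics $d \ge m+n$, the Fano condition $d \le N+1$ which one should check is automatic in the stated range, and the final specialization) should be comparatively formal given the earlier sections of the paper.
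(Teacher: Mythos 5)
Your plan is to construct a single degeneration $X_0$ of degree $d$ and dimension $N$ carrying a nontrivial $m$-torsion unramified cohomology class, resolve or alter it, and conclude by the specialization method. That is essentially the strategy of \cite{Sch-JAMS,Sch-torsion}, and it is exactly what gives the bound with $s=0$ (i.e.\ $N=n+r$, $r\le 2^n-2$). The whole content of this theorem is the extra factor $s\le(\lfloor n/m\rfloor -1)(2^{n-1}-1)$, and your proposal does not supply a mechanism to produce it: the numerology you sketch (cyclic covers for the $\lfloor n/m\rfloor-1$, degree-doubling for the $2^{n-1}-1$) misattributes where these quantities come from, and you yourself flag the fatal obstacle at point (ii) --- controlling unramified cohomology through iterated cone-type degenerations is precisely where such an approach breaks down, because the resolutions of the singular special fibres destroy the class.

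The paper does something genuinely different. It does \emph{not} track unramified cohomology across an iterated construction. Instead it (1) introduces the relative torsion order $\Tor^{\Lambda}(X,W)$, a decomposition of the diagonal relative to a closed subset $W\subset X$; (2) proves a cycle-theoretic obstruction (Theorems \ref{thm:obstruction-general}, \ref{thm:obstruction-proper}) via the map $\Psi$ which is surjective mod $m$ whenever $\bar X$ admits a $\Z/m$-decomposition of the diagonal relative to $W_{\bar X}$ --- the key novelty is that one may delete a closed subset $W_{\mathcal X}$ containing the singular/non-snc locus, so the total space need not be strictly semi-stable and the special fibre need not be globally snc; (3) sets up the double cone degeneration of Section \ref{section:doublecone} in which the special fibre is a union of two \emph{rational} components meeting along a lower-dimensional degree-$d$ hypersurface $Z$, so that the whole obstruction lives in the codimension-one stratum $Z$, not in the components (here the $\CH_1$'s of the two rational components are controlled by Lemma \ref{lem:CH1}, and a further specialization $\lambda\to 0$ kills the unwanted $\CH_1$ contribution); (4) runs an induction on dimension (Proposition \ref{prop:inductionstep}, then the proof of Theorem \ref{thm:main-torsion}): the unramified cohomology computation is done only once, at the base of the induction (Theorem \ref{thm:InductionStart}), to show the Schreieder hypersurface $Z_0$ satisfies $\Tor^{\Z/m}(Z_0,W_0)=m$ relative to a divisor $W_0$. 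The parameter $s$ counts exactly how many times the induction step can be performed, and the number of available steps is $\sum_j e_j^{(0)} = \sum_{l=1}^n\binom{n}{l}\lfloor (n-l)/m\rfloor$, which Lemma \ref{lem:bound} bounds below by $(\lfloor n/m\rfloor -1)(2^{n-1}-1)$. None of this machinery --- relative decompositions, the obstruction map $\Psi$, the double-cone induction --- appears in your proposal, and without it the improvement over \cite{Sch-torsion} cannot be obtained.
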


In Theorem \ref{thm:main-torsion} below we prove the above result under the weaker upper bound on $s$ given by
$ s \leq \sum\limits_{l = 1}^n \binom{n}{l} \left\lfloor \frac{n-l}{m} \right\rfloor$. 
 Previously, the best known bound on the torsion orders of hypersurfaces was contained in \cite{Sch-torsion} and  corresponds to the case $s=0$.

\subsection{Outline of the argument}

This paper provides a flexible cycle-theoretic analogue of the motivic obstruction from \cite{NS,KT}, which applies to degenerations into unions of varieties such that the obstruction lies in some lower-dimensional strata, and not in the components, as in \cite{voisin,CTP,Sch-Duke}. 

Previously, a solution to this problem has been proposed by the second named author with Pavic in \cite{PavicSch}, with an important recent generalization by the first named author in \cite{Lange}.
The main weakness of our previous approach is the fact that one has to compute an explicit strictly semi-stable model of the degeneration in question, control the combinatorics of the dual complex of the special fibre and control the Chow groups of 0- and 1-cycles of all components, which itself is a notoriously difficult task for almost any given smooth projective variety.
These tasks have been solved in a computationally involved manner for quartic fivefolds \cite{PavicSch} and $(3,3)$-complete intersections in $\CP^7$ \cite{LangeSkauli23}.
However, we do not see how to apply our obstruction from \cite{PavicSch,Lange} systematically to examples of higher dimensions, such as to the higher-dimensional complete intersections or hypersurfaces treated via the aforementioned motivic method in \cite{NO,Moe}; 
the total space of these degenerations are not strictly semi-stable (they have toric singularities) and the special fibre has a large number of components whose Chow groups seem inaccessible. 

The main improvement proposed in this paper is an extension of our previous method from \cite{PavicSch,Lange} to 
the non-proper case and hence in effect to 
(very) singular degenerations, that we could not handle before.
On a technical level, the idea is to work with pairs of a variety $X$ and a closed subset $W\subset X$.
 To state our obstruction, we say that a variety $X$ admits a decomposition of the diagonal with respect to a closed subset $W\subset X$, if the diagonal point $\delta_X\in \CH_0(X_{k(X)})$ lies in the image of $\CH_0(W_{k(X)})\to \CH_0(X_{k(X)})$, see Section \ref{sec:torsion-order} below. 
With this terminology, an ordinary decomposition of the diagonal corresponds to one with respect to a zero-dimensional closed subset.

The obstruction to rationality that we introduce and exploit reads as follows.

\begin{theorem}\label{thm:Obstruction-intro}
 Let $R$ be a discrete valuation ring with algebraically closed residue field $k$ and fraction field $K$. Let $\mathcal{X} \to \Spec R$ be a  proper flat 
 $R$-scheme with geometrically integral generic fibre $X=\mathcal X\times_RK$ and special fibre $Y=\mathcal X\times_Rk$. 
    Let $W_{\mathcal{X}} \subset \mathcal{X}$ be a closed subscheme and let $W_X \coloneq  W_{\mathcal{X}} \cap X$ and $W_Y \coloneq  W_{\mathcal{X}} \cap Y$ be the intersections with the generic fibre and the special fibre, respectively. 
    Assume that the following conditions are satisfied:
    \begin{enumerate}[(1)]
        \item $\mathcal{X}^\circ \coloneq  \mathcal{X} \setminus W_{\mathcal{X}}$ is strictly semi-stable over $R$ (see Definition \ref{def:strictly-semi-stable} below); 
        \item $Y^\circ \coloneq  Y \setminus W_Y$ consists of two components $Y_0^\circ, Y_1^\circ$, with intersection $Z^\circ=Y_0^\circ\cap Y_1^\circ$.  
    \end{enumerate}
    If the geometric generic fibre $\bar X=X\times \bar K$ admits a decomposition of the diagonal relative to the closed subset $W_{\bar{X}} \coloneq  W_{X} \times_K \bar{K}$, then, for any field extension $L/k$, the map
    $$
        \Psi_{Y_L^\circ} \colon \CH_1(Y_0^\circ \times_k L) \oplus \CH_1(Y_1^\circ \times_k L) \longrightarrow \CH_0(Z^\circ \times_k L),\quad (\gamma_0,\gamma_1) \mapsto  
        \gamma_0|_{Z^\circ}-\gamma_1|_{Z^\circ}
    $$
    is surjective modulo any integer $m$ that is invertible in $k$, where 
 $\gamma_i|_{Z^\circ}$ denotes the pullback of $\gamma_i$ along the regular embedding $Z^\circ\hookrightarrow Y_i^\circ$, see \cite[Remark 2.3]{fulton}.  
\end{theorem}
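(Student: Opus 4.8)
My strategy is to spread out the relative decomposition of the diagonal over the discrete valuation ring, specialise it to the special fibre, and then extract the surjectivity of $\Psi_{Y^\circ}$ from the combinatorics of the semi-stable locus together with the properness of $\mathcal X$; this is a non-proper analogue of the obstruction for proper strictly semi-stable degenerations in \cite{PavicSch,Lange}. By the localisation sequence and continuity of Chow groups, the hypothesis is equivalent to a cycle identity
\[
 [\Delta_{\bar X}] \;=\; \Gamma_W + \Gamma_D \;\in\; \CH_n(\bar X\times_{\bar K}\bar X),\qquad n=\dim X,
\]
with $\Gamma_W$ supported on $\bar X\times_{\bar K} W_{\bar X}$ and $\Gamma_D$ supported on $\bar D\times_{\bar K}\bar X$ for a proper closed subset $\bar D\subsetneq\bar X$. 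Such an identity is already defined over a finite subextension $K'/K$ of $\bar K/K$, so I would choose a discrete valuation ring $R'$ dominating $R$ with fraction field $K'$; as $k$ is algebraically closed its residue field is again $k$, and the fibre of $\mathcal X':=\mathcal X\times_RR'$ over the closed point of $\Spec R'$ is again $Y$, so that $Y_0^\circ,Y_1^\circ,Z^\circ$ and the homomorphism $\Psi_{Y^\circ}$ do not change. The assertion for an arbitrary extension $L/k$ reduces to the case $L=k$ by the same construction with the unramified base change along a discrete valuation ring dominating $R$ with residue field $L$ (which preserves strict semi-stability, the relative decomposition of the diagonal surviving the corresponding extension of $\bar K$). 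So I may assume $L=k$ and work over $R'$, where the decomposition of the diagonal relative to $W_{X'}$ now holds over $K'$ itself.

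The extension $K'/K$ is in general totally ramified of some index $e$, so $\mathcal X'^\circ:=\mathcal X^\circ\times_RR'$ acquires transverse $A_{e-1}$-singularities along $Z^\circ$ and is only semi-stable. I would replace it by the minimal (toric) resolution $\widetilde{\mathcal X}\to\Spec R'$, with a proper total space obtained by compactifying away from $W$, whose special fibre is a chain $Y_0^\circ-E_1-\cdots-E_{e-1}-Y_1^\circ$ with end components $Y_0^\circ,Y_1^\circ$, intermediate components $E_i$ that are $\PP^1$-bundles over $Z^\circ$, and all $e$ double loci isomorphic to $Z^\circ$; in particular the two Gysin maps $\CH_1(Y_j^\circ)\to\CH_0(Z^\circ)$ are exactly those occurring in $\Psi_{Y^\circ}$. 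Now take closures in $\widetilde{\mathcal X}\times_{R'}\widetilde{\mathcal X}$ of $\Gamma_W$, of $\Gamma_D$ and of the diagonal of $X'$ (whose closure is $\Delta_{\widetilde{\mathcal X}}$, being flat over $R'$), and restrict everything along the Gysin map for the Cartier divisor cut out by a uniformiser of $R'$. This produces on the special fibre an identity expressing the diagonals of the chain components through a class supported on $W$ and a class supported on a proper closed subset; chasing it through the localisation sequences for each double locus inside its ambient chain component and the associated Gysin maps — and using properness of $\widetilde{\mathcal X}$ to stop zero-cycles from escaping into $W$ — yields, modulo any $m$ invertible in $k$, the surjectivity of the chain obstruction sending a tuple $(\gamma_i)_{i=0}^{e}$ in $\CH_1(Y_0^\circ)\oplus\bigoplus_{i=1}^{e-1}\CH_1(E_i)\oplus\CH_1(Y_1^\circ)$ to the family $(\gamma_i\cdot D_j-\gamma_{i+1}\cdot D_j)_{j=0}^{e-1}$. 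Finally I would collapse the chain: as each $E_i$ is a $\PP^1$-bundle over $Z^\circ$, the two boundary restrictions of a $1$-cycle on $E_i$ differ only by terms of the form (a curve on $Z^\circ$)$\,\cdot\,$(a line bundle on $Z^\circ$), which already lie in the image of $\Psi_{Y^\circ}$; telescoping along the chain then reduces the chain obstruction to surjectivity modulo $m$ of $\Psi_{Y^\circ}$ itself.

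The main obstacle is the chase in the second step. One must make the specialisation Gysin homomorphism well behaved on the open semi-stable locus, control precisely the error terms that arise because $\mathcal X^\circ$ is non-proper — this is exactly what the closed set $W$, together with the properness of the full $\mathcal X$, is designed to absorb — and locate where the construction forces the conclusion to hold only modulo integers invertible in $k$; the latter is where one has to match the specialisation of $[\Delta]$ with an honest cycle relation on the special fibre, which in positive residue characteristic requires alterations of degree prime to $m$. By comparison the reformulation, the descent, and the collapsing of the chain are formal, whereas this chase carries essentially all the content of the theorem.
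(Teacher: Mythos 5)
Your overall strategy — spread out, pass to a finite ramified base change, resolve to a chain of $\PP^1$-bundles over $Z^\circ$, specialise, and collapse the chain — is indeed the same shape of argument the paper runs. But there are two real gaps, and they sit exactly where you yourself flag the argument as carrying "essentially all the content."

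First, and most seriously, the collapsing step. You claim that for a $1$-cycle on a middle component $E_i$, the difference of its two boundary restrictions is a class of the form (curve on $Z^\circ$)$\cdot$(line bundle on $Z^\circ$) "which already lies in the image of $\Psi_{Y^\circ}$." This is not evident: the image of $\Psi_{Y^\circ}$ consists of classes $\gamma_0\cdot Z^\circ-\gamma_1\cdot Z^\circ$ with $\gamma_i\in\CH_1(Y_i^\circ)$, and an arbitrary curve $\zeta$ on $Z^\circ$ paired with an arbitrary line bundle is not a priori of this form. The paper's Proposition \ref{prop:basechange+resolution-general} handles this by a two-step argument: first one modifies the chain cycle (without changing $\Phi$) so that the middle components carry only pullback classes $q^*\alpha$ — this is where the "section pieces" $s_*\zeta$ are pushed along the chain to the end components — and then one applies the explicit telescoping identity \eqref{eq:Phi-Psi}, $\sum_{n=1}^{r-1}n\,\Phi_{Y',Y'_{(e,n)}}(\gamma)=\Psi_{Y,Y_e}(q_*\gamma)$, which \emph{only holds modulo the characteristic $c$ of $\Lambda$, and crucially needs the ramification index $r$ to be chosen divisible by $c$}. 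You never impose any divisibility condition on the ramification of $K'/K$, so your collapsing step cannot be made to work as written; if it did work integrally, this would contradict the paper's explicit Remark after Proposition \ref{prop:basechange+resolution-general} that they do not know how to carry out this step over $\Z$.

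Second, the "chase in the second step" is not the paper's mechanism and is left too vague to be assessed. You propose to close up the correspondences $\Gamma_W,\Gamma_D,\Delta$ in $\widetilde{\mathcal X}\times_{R'}\widetilde{\mathcal X}$ and Gysin-restrict. What the paper actually does is quite different and more concrete: a zero-cycle on the double loci of $Y'$ is lifted, via Hensel's lemma, to a horizontal $1$-cycle $h$ on $\widetilde{\mathcal X}_A$; then Lemma \ref{lem:weak-decomposiiton-of-diagonal} (which absorbs the decomposition-of-the-diagonal hypothesis and the alteration that introduces the invertibility of $m$) shows the generic fibre has $m$-torsion $\CH_0$ after any field extension, so $m\cdot h$ dies on the generic fibre and is hence rationally equivalent to a cycle supported on $Y'$; finally Lemma \ref{lem:alternativePhi} identifies the result with $\Phi_{Y'}$ of that supported cycle. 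Your "chasing through the localisation sequences for each double locus inside its ambient chain component" with "properness to stop zero-cycles escaping into $W$" does not obviously reproduce this, and the passage from a correspondence identity on $Y'\times Y'$ to a statement about $\Psi_{Y^\circ}$ on $Z^\circ$ is precisely the subtle point in the non-proper setting that the paper's Lemma \ref{lem:weak-decomposiiton-of-diagonal} and the Hensel-lift are designed to handle. To make your route precise you would need to justify that the closed diagonal restricts to something controllable after removing $W$ — which is close to re-deriving Lemma \ref{lem:weak-decomposiiton-of-diagonal} — so you would not actually save work.

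In short: the chain-and-collapse blueprint is right, but you are missing the divisibility condition $c\mid r$ and the telescoping identity \eqref{eq:Phi-Psi} that together make the collapse legal, and the central Hensel-lift/torsion-of-$\CH_0$ mechanism is replaced by a sketch that has not been shown to work.
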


Theorem \ref{thm:Obstruction-intro} admits a generalization to the case where $Y^\circ$ is an snc scheme without triple intersections, see Theorem \ref{thm:obstruction-proper} below.
Our arguments do not generalize to degenerations where the obstruction lies in deeper strata, see Remark \ref{rem:triple-intersection} below.

The obstruction map $\Psi_{Y_L^\circ}$ from Theorem \ref{thm:Obstruction-intro} is a refined version of the one in \cite{PavicSch,Lange}.
The presence of $W_{\mathcal X}$ in the above theorem yields the extra flexibility that was missing in \cite{PavicSch,Lange}.
In applications we will declare the singular locus of $\mathcal X$, the non-snc locus of $Y$, as well as all but two components of $Y$, to be contained in $W_{\mathcal X}$. 
In particular, the family $\mathcal X$ in the above theorem may be quite singular and the combinatorics of the special fibre $Y$ can be complicated.

To explain the mechanism of the above theorem, assume  that $Z^\circ$ is integral and let $Z\subset Y$ be the closure of $Z^\circ$.
Assume further that $\CH_1(Y_i^\circ \times_k L) = 0$ for $i=0,1$ and any field extension $L/k$.
(This will not hold on the nose in practice, but we will be able to achieve this after degeneration and show that this suffices for the argument.) 
In particular, the map $\Psi_{Y_L^\circ}$ in the above theorem is the zero map for every field extension $L/k$.  
Applying this to the function field $L=k(Z)$ of $Z$ and assuming that $\bar X$ admits a decomposition of the diagonal with respect to $W_{\bar X}$, 
we get that the image of the diagonal point $\delta_Z$ in $\CH_0(Z^\circ\times k(Z))$ vanishes, and so, by the localization sequence, $Z$ admits a decomposition with respect to $Z \cap W_Y$.
In other words,  $\bar X$ admits no decomposition of the diagonal with respect to $W_{\bar X}$ as long as  $Z$ admits no decomposition  with respect to $Z \cap W_Y$.
Examples of this strategy are illustrated in Examples \ref{ex:quartic-surface} and \ref{ex:quartic5folds} below. 

Since $\dim Z=\dim X-1$, the above reasoning sets the stage for an inductive argument where one increases the dimension by one in each step.
What makes this work is the observation that the examples of Fano hypersurfaces without a decomposition of the diagonal in \cite{Sch-JAMS,Sch-torsion} can in fact be shown to have no decomposition of the diagonal with respect to a large class of divisors, see Theorem \ref{thm:InductionStart} below.
This will serve as the start of our induction. 
For the induction step we degenerate a given hypersurface of degree $d$ to a union of two rational varieties which meet along the lower-dimensional hypersurface of degree $d$ that we have produced in the previous step of the induction, see Section \ref{section:doublecone} below for the precise degeneration we pick.
This step is inspired by \cite{Moe}.   
The total space of our degeneration as well as the fibres and their components will be (very) singular.
The singularities are not toric and so even in characteristic zero, the method in \cite{KT,NS,NO} does not seem to apply to our degeneration.

\begin{remark}
After completion of this paper, 
    James Hotchkiss and David Stapleton  informed us that they have independently obtained a different argument which shows that, over fields of characteristic zero, the hypersurfaces in Theorem \ref{thm:main-irrationality-intro} are not $\A^1$-connected and hence not retract rational,  see \cite{hotchkiss-stapleton}.
   Their approach relies on a homotopy-theoretical lift of the obstruction from \cite{NS,KT}. 
    As in \cite{NS,KT,NO,Moe}, the assumption on the characteristic is needed to be able to apply weak factorization.
\end{remark}

\section{Preliminaries}
\subsection{Conventions}
Rings are understood to be commutative with 1.
The characteristic of a ring $\Lambda$ is the smallest positive integer $c\in \Z_{\geq 1} $ such that any element in $\Lambda$ is $c$-torsion; it is zero if no such integer exists. 
The exponential characteristic of a field $k$ is $1$ if $k$ has characteristic zero and it is equal to the characteristic of $k$ otherwise.

An algebraic scheme is a separated scheme of finite type over a field.
A variety is an integral algebraic scheme.
Let $Y$ be an algebraic scheme, then we denote by $\CH_i(Y)$ the Chow groups of dimension $i$ cycles.
For a ring $\Lambda$, we let $\CH_i(Y,\Lambda)\coloneq \CH_i(Y) \otimes_\Z \Lambda$. 

Let $R$ be a ring. 
By an $R$-scheme we always mean a separated $R$-scheme of finite type, unless stated otherwise.
For an $R$-scheme $X$ and an $R$-algebra $A$, we denote the fibre product by $X \times_R A \coloneq  X \times_{\Spec R} \Spec A$ or simply by $X_A$.
We sometimes omit the ring $R$, if it is clear from context.

\subsection{Strictly semi-stable degenerations}

 \begin{definition} \label{def:snc}
    Let $k$ be a field. An snc scheme of dimension $n$ over $k$ is a geometrically reduced algebraic scheme $Y$ over $k$ with irreducible components $Y_i$, $i\in I$, such that for any subset $J\subset I$, the (scheme-theoretic) intersection $Y_J\coloneq \bigcap_{j\in J}Y_j$ is smooth over $k$ and,  if non-empty,  equidimensional of dimension $n+1-|J|$.
\end{definition}

We recall the definition of strictly semi-stable schemes over a discrete valuation ring, see e.g.\  \cite[Definition 1.1]{hartl}.

\begin{definition} \label{def:strictly-semi-stable}
Let $R$ be a discrete valuation ring with fraction field $K$ and residue field $k$.
A strictly semi-stable $R$-scheme $\mathcal X\to \Spec R$ is an irreducible, reduced, separated scheme which is flat and of finite type over $R$ with the following properties:
\begin{itemize}
    \item the generic fibre $X=\mathcal X\times_R K$ is smooth over $K$;
    \item the special fibre $Y=\mathcal X\times_R k$ is an snc scheme over $k$;
    \item each component of the special fibre $Y$ is a Cartier divisor on $\mathcal X$.
\end{itemize} 
\end{definition}

\subsection{Fulton's specialization map} \label{subsec:fulton}

Let $R$ be a discrete valuation ring with fraction field $K$ and residue field $k$.
Let $\mathcal X\to \Spec R$ be a flat $R$-scheme of finite type with generic fibre $X=\mathcal X\times_RK$ and special fibre $Y=\mathcal X\times_Rk$.
Then, for any ring $\Lambda$, there is a specialization map on Chow groups
$$
\specialization:\CH_i(X,\Lambda)\longrightarrow \CH_i(Y,\Lambda),
$$
defined as follows.
If  $\Lambda=\Z$ and $z=[Z]\in Z_i(X)$ is represented by an $i$-dimensional subvariety $Z\subset X$, then $\specialization(z)$ is represented by the restriction of the closure of $Z$ in $\mathcal X$ to $Y$.
(This restriction could be empty, in which case $\specialization(z)=[\emptyset]=0$.)
This extends $\Z$-linearly to a well-defined map by an argument of Fulton (see \cite[\S 4.4]{fulton75}, \cite[\S 20.3]{fulton}, or \cite[proof of Theorem 8.2]{Sch-survey}).
The case of arbitrary coefficients follows from this by functoriality of the tensor product.
If $k$ is algebraically closed, then the above map induces a well-defined map
$$
\specialization\colon \CH_i(\bar X,\Lambda)\longrightarrow \CH_i(Y,\Lambda) ,
$$
where $\bar X=X\times_K\bar K$ denotes the base change to an algebraic closure.
(In \cite[\S 4.4]{fulton75}, \cite[\S 20.3]{fulton}, this is shown for the completion $\hat R$ of $R$; the above case then follows via precomposing with the natural map $\CH_i(\bar X)\to \CH_i(X\times_K \overline{\Frac(\hat R)})$.)

We will need the following specific result on Fulton's specialization map.

 \begin{lemma}\label{lem:specialization-functoriality}
 Let $\Lambda$ be a ring and let $R$ be a discrete valuation ring with fraction field $K$ and residue field $k$.
Let $p:\mathcal X\to \Spec R$ and $q:\mathcal Y\to \Spec R$ be flat $R$-schemes of finite type.
Denote by $X_\eta,Y_\eta$ and $X_0,Y_0$ the generic and special fibres of $p$, $q$, respectively.
Assume that $Y_\eta$ is geometrically integral and that there is a geometrically integral component $Y_0'\subset Y_0$, such that 
 $A=\mathcal O_{\mathcal Y,Y'_0}$ is a discrete valuation ring and consider the flat $A$-scheme $\mathcal X_A\to \Spec A$, given by base change of $p$.
Then Fulton's specialization map induces a specialization map
$$
\specialization\colon\CH_i( X_\eta \times_{K} \bar{K}(Y_\eta),\Lambda )\longrightarrow \CH_i( X_0 \times_{k} \bar{k}(Y'_0),\Lambda ),
$$
where $\bar K$ and $\bar k$ denote algebraic closures of $K$ and $k$, respectively, such that the following holds:
\begin{enumerate}[(1)]
\item $\specialization$  commutes with pushforwards along proper maps and pullbacks along regular embeddings;
\item If $\mathcal X=\mathcal Y$ and $X_0$ is integral, then $\specialization(\delta_{X_\eta})=\delta_{X_0}$, where $\delta_{X_\eta}\in \CH_0( X_\eta \times_{K} \bar{K}(X_\eta),\Lambda)$ and $\delta_{X_0}\in \CH_0( X_0 \times_{k} \bar{k}(X_0),\Lambda)$ denote the diagonal points.
\end{enumerate} 
\end{lemma}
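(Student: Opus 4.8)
The plan is to construct the specialization map as a composition of Fulton's specialization for the base change $\mathcal{X}_A \to \Spec A$ with appropriate base-field extensions, and then to check the two functoriality properties by reducing to the already-established properties of Fulton's map recalled in Section \ref{subsec:fulton}. First I would observe that since $Y_\eta$ is geometrically integral, its generic point has residue field $K(Y_\eta)$ with $K(Y_\eta) \otimes_K \bar{K}$ a domain, so that $\bar{K}(Y_\eta) := \Frac(K(Y_\eta) \otimes_K \bar K)$ makes sense; similarly $\bar{k}(Y_0')$ is well-defined because $Y_0'$ is geometrically integral. The discrete valuation ring $A = \mathcal{O}_{\mathcal{Y}, Y_0'}$ has fraction field $K(Y_\eta)$ (the generic fibre $Y_\eta$ of $q$ being integral with function field $K(Y_\eta)$, this is the local ring at its generic point) and residue field $k(Y_0')$. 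The completion (or strict henselization) argument from \cite{fulton75,fulton}, exactly as in the last paragraph of Section \ref{subsec:fulton}, then produces Fulton's specialization map
$$
\specialization \colon \CH_i(X_\eta \times_K \bar{K}(Y_\eta), \Lambda) \longrightarrow \CH_i(X_0 \times_k \bar{k}(Y_0'), \Lambda)
$$
for the flat $A$-scheme $\mathcal{X}_A$, whose generic fibre is $X_\eta \times_K K(Y_\eta)$ and whose special fibre is $X_0 \times_k k(Y_0')$; the extra passage to algebraic closures of the residue fields is handled by precomposing with $\CH_i(X_\eta \times_K \bar{K}(Y_\eta)) \to \CH_i(X_\eta \times_{K} \overline{\Frac(\hat A)})$ just as in the cited construction, after checking that $\overline{\Frac(\hat A)}$ contains a copy of $\bar{K}(Y_\eta)$ and that the residue field of $\hat A$ has algebraic closure containing $\bar{k}(Y_0')$.

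For part (1), compatibility with proper pushforward and with pullback along regular embeddings is a known property of Fulton's specialization map: the proper pushforward statement is in \cite[\S 20.3]{fulton} (the closure of a cycle and restriction to the special fibre commute with proper pushforward by \cite[Proposition 1.7, Theorem 6.2(a)]{fulton}), and compatibility with Gysin pullback along a regular embedding follows because the closure construction and flat restriction commute with refined Gysin homomorphisms; since our map is obtained from Fulton's by the tensor–product and base-change formalism, these compatibilities are inherited. I would spell this out by noting that a proper map $f \colon \mathcal{X}_A' \to \mathcal{X}_A$ (resp. a regular embedding) over $A$ restricts to proper maps (resp. regular embeddings) on both fibres, and that the diagram relating closures in $\mathcal{X}_A'$ and $\mathcal{X}_A$ commutes on the level of cycles.

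For part (2), when $\mathcal{X} = \mathcal{Y}$ we have $X_\eta = Y_\eta$, $X_0 = Y_0$, and $Y_0'$ is a component of $X_0$; the extra hypothesis that $X_0$ is integral forces $Y_0' = X_0$, so $A = \mathcal{O}_{\mathcal{X}, X_0}$ and the base change $\mathcal{X}_A \to \Spec A$ has generic fibre $X_\eta \times_K K(X_\eta)$ and integral special fibre $X_0 \times_k k(X_0)$. The diagonal point $\delta_{X_\eta} \in \CH_0(X_\eta \times_K \bar{K}(X_\eta))$ is represented by the base change to $\bar{K}(X_\eta)$ of the generic point of $X_\eta$, i.e. by (the closure of) the diagonal section $\Spec K(X_\eta) \to X_\eta \times_K K(X_\eta)$. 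Its closure in $\mathcal{X}_A$ is precisely the section $\Spec A \to \mathcal{X}_A$ coming from the tautological morphism $\Spec \mathcal{O}_{\mathcal{X}, X_0} \to \mathcal{X}$, and restricting this to the special fibre gives the generic point of $X_0 \times_k k(X_0)$, whose class after base change to $\bar{k}(X_0)$ is exactly $\delta_{X_0}$. Hence $\specialization(\delta_{X_\eta}) = \delta_{X_0}$, as claimed.

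The main obstacle I anticipate is bookkeeping rather than conceptual: one must carefully track the several field extensions ($K \to K(Y_\eta) \to \bar{K}(Y_\eta)$ on the generic side, $k \to k(Y_0') \to \bar{k}(Y_0')$ on the special side, and the completion $\hat A$ interpolating them) and verify that Fulton's construction is genuinely functorial under the base change $R \to A$ in the way needed, in particular that passing to $\hat A$ does not disturb the identification of the special fibre and that the resulting map is independent of the choices made. Once the formalism of Section \ref{subsec:fulton} is set up relative to an arbitrary discrete valuation ring — which the cited references do — all three assertions reduce to the corresponding statements there, so no new geometric input is required.
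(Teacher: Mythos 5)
Your proposal is essentially a reconstruction of the argument underlying the paper's proof, which is itself only a two-line citation to \cite[Lemma 5.8]{PavicSch} (together with the remark that the properness and connectedness hypotheses there are not used). The key structural moves you make are correct and are the same as in that reference: identify $A=\mathcal O_{\mathcal Y,Y_0'}$ as a DVR with fraction field $K(Y_\eta)$ and residue field $k(Y_0')$; observe that the flat $A$-scheme $\mathcal X_A$ has generic fibre $X_\eta\times_K K(Y_\eta)$ and special fibre $X_0\times_k k(Y_0')$; apply Fulton's specialization to $\mathcal X_A$; deduce (1) from the known compatibilities of Fulton's map with proper pushforward and Gysin pullback; and for (2), track the closure of the diagonal section, which is exactly the canonical section $\Spec A\to\mathcal X_A$ and restricts to the generic point of the special fibre.

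The one place where your argument is genuinely incomplete is the passage to the algebraic closures. The device in Section \ref{subsec:fulton} --- precompose Fulton's map over $\hat A$ with $\CH_i(X_\eta\times_K\bar K(Y_\eta))\to\CH_i(X_\eta\times_K\overline{\Frac(\hat A)})$ --- only enlarges the source, and it works in Section \ref{subsec:fulton} because the residue field there is already algebraically closed, so the target is unchanged. In the present situation the residue field of $\hat A$ is $k(Y_0')$, which is not algebraically closed, so this recipe alone produces a map into $\CH_i(X_0\times_k k(Y_0'),\Lambda)$ rather than into $\CH_i(X_0\times_k\bar k(Y_0'),\Lambda)$. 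You note that the residue field of $\hat A$ has algebraic closure containing $\bar k(Y_0')$, but that observation by itself does nothing to change the target of Fulton's map. The missing step is to enlarge the residue field: either postcompose with the flat base-change $\CH_i(X_0\times_k k(Y_0'),\Lambda)\to\CH_i(X_0\times_k\bar k(Y_0'),\Lambda)$ and check that this composition still satisfies (1) and (2), or first pass to an unramified extension $A'/A$ with residue field $\bar k(Y_0')$ (via Bourbaki's inflation of local rings, exactly as the paper does in the proof of Lemma \ref{lem:Lambda-torsion-order-specialize} and Theorem \ref{thm:obstruction-general}) and then apply Fulton's construction over $A'$. Either way requires a little more than the single precomposition you describe; since the paper delegates this entirely to \cite{PavicSch}, your proposal is a reasonable account of what must be done, but as written this step does not close.
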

\begin{proof}
By functoriality of the tensor product, it suffices to prove the lemma in the case where $\Lambda=\Z$.
The lemma  is then stated under the assumption that $p$ and $q$ are proper with connected fibres  in \cite[Lemma 5.8]{PavicSch}, but the proof does not need those assumptions.
\end{proof}

\subsection{Very general hypersurfaces and their degenerations}
\begin{definition}\label{def:very-general}
    A hypersurface $X\subset \CP^{N+1}_k$ over a field $k$ is called very general, if the coefficients of a defining equation are algebraically independent over the prime field of $k$.
\end{definition}

We say that a variety $X$ over a field $L$ degenerates to a variety $Y$ over an algebraically closed field $k$ if there is a discrete valuation ring $R$ with residue field $k$ and fraction field $K$ with $K\subset L$ and a flat $R$-scheme $\mathcal X\to \Spec R$ of finite type whose special fibre is $Y$ and such that $\mathcal X\times _R L\cong X$.

\begin{lemma} \label{lem:very-general}
Let $\mathcal X\to B\coloneq \CP ( H^0(\CP^{N+1}_k,\mathcal O(d)))$ be the universal family of degree $d$ hypersurfaces of dimension $N$ over a field $k$.
Then the following hold:
\begin{enumerate}[(1)]
    \item The locus of very general hypersurfaces $B_{vg}\subset B$ (as a set of schematic points) is the complement of a countable union of closed subsets; it is non-empty if the transcendence degree of $k$ over the prime field is $\geq \dim B$.
    \item Let $X$ be a very general hypersurface, namely $X = X_b$, where $b \in B_{vg}$.
    There are (algebraically closed) field extensions $L/k(B)$ and $K/k$ together with an isomorphism of fields $\varphi:K\to L$, such that $\varphi$ induces an isomorphism of schemes
    $$
    X\times_kK\stackrel{\sim}\longrightarrow \mathcal X\times_{k(B)}L.
    $$
    In particular, up to a base change, $X$ degenerates to any other hypersurface $Y\subset \CP_k^{N+1}$ of degree $d$ in the above sense.
\end{enumerate}
\end{lemma}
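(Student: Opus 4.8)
The plan is to treat the two parts separately, the first being essentially formal and the second being a statement about function fields of irreducible varieties.

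For part (1), I would argue as follows. Fix an affine chart of $B$ and coordinates $a = (a_\alpha)$ on it; the very general locus is, by Definition \ref{def:very-general}, the set of points where the coordinates $a_\alpha$ (equivalently, the ratios of the projective coordinates) are algebraically independent over the prime field $k_0$ of $k$. A point $b$ fails to be very general precisely when some nonzero polynomial $P \in k_0[x_\alpha]$ satisfies $P(b) = 0$. Since $k_0$ is the prime field, $k_0[x_\alpha]$ is countable, so the bad locus is the union over the countably many such $P$ of the closed subsets $\{P = 0\} \subset B$ (discarding those $P$ which vanish identically, if any — there are none since $B$ is a variety over $k_0$). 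Hence $B_{vg}$ is the complement of a countable union of proper closed subsets. For non-emptiness when $\operatorname{trdeg}(k/k_0) \geq \dim B$: choose $\dim B$ elements of $k$ algebraically independent over $k_0$ and use them as the coordinates of a point $b$ in the chosen affine chart; by construction no nonzero $P \in k_0[x_\alpha]$ vanishes at $b$, so $b \in B_{vg}$. (One should note $\dim B = \binom{N+1+d}{d} - 1$.)

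For part (2), the key point is that for $b \in B_{vg}$ the field $k(b)$ generated over $k_0$ by the coordinates of $b$ is a purely transcendental extension $k_0(x_\alpha)$ of transcendence degree $\dim B$, hence abstractly isomorphic to the residue field $k_0(B) \subset k(B)$ at the generic point of $B$ (both are rational function fields in $\dim B$ variables over $k_0$). More precisely: the residue field $\kappa(b)$ of the point $b \in B$ contains $k$ and also the subfield $k_0(x_\alpha) \cong k_0(B)$; since $b$ is very general, $k$ and this copy of $k_0(B)$ are linearly disjoint over $k_0$ inside $\kappa(b)$. I would then build the common overfield by taking $L$ to be an algebraic closure of the compositum $k(B) \cdot k$ (formed inside an algebraic closure of $k(B)$, using that $k/k_0$ and $k_0(B)/k_0$ can be amalgamated), and $K$ an algebraic closure of $\kappa(b) = k \cdot k_0(x_\alpha)$; the isomorphism $k_0(x_\alpha) \xrightarrow{\sim} k_0(B)$ fixing $k_0$ and matching $x_\alpha \mapsto$ (coordinate functions on $B$), extended by the identity on $k$ and then to algebraic closures, gives $\varphi : K \xrightarrow{\sim} L$. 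By construction $\varphi$ carries the defining equation of $X = X_b$ (whose coefficients are the $x_\alpha$, i.e. the coordinates of $b$) to the defining equation of the universal hypersurface $\mathcal{X}$ base-changed to $L$, whence the claimed isomorphism $X \times_k K \xrightarrow{\sim} \mathcal{X} \times_{k(B)} L$. The final sentence about degeneration then follows: given any hypersurface $Y \subset \CP^{N+1}_k$ of degree $d$, it corresponds to a $k$-point $b_0 \in B$, and choosing a discrete valuation ring $R \subset k(B)$ (e.g. the local ring of a curve in $B$ through $b_0$ and specializing $\mathcal{X}$ to $Y$, or more robustly an $R$ with residue field $k$ and fraction field inside $L$) realizes $\mathcal{X}$ as a flat family over $R$ with special fibre $Y$ and generic fibre isomorphic, after the base change $K/k$ above, to $X$; one may need to pass to a further extension of $R$ to arrange the residue field to be exactly $k$, which is harmless.

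The main obstacle I expect is the bookkeeping of fields in part (2): one must carefully amalgamate $k$ and $k_0(B)$ over $k_0$ (this is where linear disjointness, guaranteed by $b$ being very general, is used), and one must ensure the discrete valuation ring $R$ in the last sentence has residue field exactly $k$ and fraction field embeddable in the relevant algebraically closed field — this typically forces a harmless further base change of $R$, which should be acknowledged. The scheme-theoretic isomorphism itself is then immediate because both sides are cut out in $\CP^{N+1}$ by the ``same'' equation under $\varphi$.
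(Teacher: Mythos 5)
Your part (1) is correct and is the same argument as the paper's: the non–very-general locus is the union of the $k$-points of the countably many proper $k_0$-subvarieties of $B$, and a point with algebraically independent coordinates avoids all of them.

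Part (2) has a genuine error. You ask $\varphi$ to simultaneously (a) restrict to the identity on $k$, and (b) send the coordinates $x_\alpha \in k$ of the very general point $b$ to the coordinate functions of $B$ sitting inside $k(B)$. These are incompatible: since $b$ is a $k$-point, the $x_\alpha$ lie in $k$, so (a) forces $\varphi(x_\alpha)=x_\alpha \in k \subset L$, whereas (b) forces $\varphi(x_\alpha)$ to be the generic coordinate function, which is transcendental over $k$ in $k(B) \subset L$. Relatedly, with your choices $K=\overline{\kappa(b)}=\bar k$ and $L=\overline{k(B)}$, no isomorphism $K\to L$ can exist at all: $\operatorname{trdeg}_{k_0}L = \operatorname{trdeg}_{k_0}k + \dim B > \operatorname{trdeg}_{k_0}K$.

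The correct version (and what the paper actually does) drops the requirement that $\varphi$ fix $k$. One identifies $F_0 := k_0(B_0)$, the function field of the universal family over the prime field, with the subfield $k_0(x_\alpha)\subset k$ generated by the coordinates of $b$; then $X$ (forgetting its $k$-structure) and the generic fibre $\mathcal X_\eta$ (forgetting its $k(B)$-structure) are both base changes of the same abstract scheme $(\mathcal X_0)_\eta$ over $F_0$, along two \emph{different} embeddings $F_0\hookrightarrow k\subset K$ and $F_0 \hookrightarrow k(B)\subset L$. One then enlarges $K$ and $L$ to algebraically closed fields with the same transcendence degree over $F_0$ (e.g. take $K\supset k$ with $\operatorname{trdeg}(K/k)\geq\dim B$ and $L=\overline{k(B)}$, or just take both of sufficiently large transcendence degree) so that the identity on $F_0$ extends to an isomorphism $\varphi:K\to L$. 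This $\varphi$ moves $k$ and does not fix it, which is harmless: the statement only asks that $\varphi$ induce an isomorphism of abstract schemes, not of $k$-schemes.
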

\begin{proof}
We have $B=\CP^{N'}$ for some integer $N'$.
By definition, the  complement of $B_{vg}\subset B$ is given by the union of all hypersurfaces in $B=\CP^{N'}$ that are defined over the prime field $k_0$ of $k$.
This is a countable union and the complement contains a point as soon as $\operatorname{trdeg}_{k_0} k\geq \dim B$. This proves the first assertion.

Consider the universal family
$\mathcal X_0\to B_0=\CP ( H^0(\CP^{N+1}_{k_0},\mathcal O(d)))$ of degree $d$ hypersurfaces of dimension $N$ over the prime field $k_0$ of $k$.
The second item follows from the observation that any hypersurface of dimension $N$ and degree $d$ over a field extension of $k_0$, such that the coefficients of a defining equation are algebraically independent over $k_0$, is as an abstract scheme (i.e.\ without any structure morphism) a base change of the generic fibre of $\mathcal X_0\to B_0$.
This concludes the proof of the lemma.
\end{proof}

\section{Torsion orders and decompositions of the diagonal relative to a closed subset} \label{sec:torsion-order}

Let $X$ be a variety over a field $k$.
We say that $e$ times the diagonal of $X$ decomposes if there is a zero-cycle $z\in \CH_0(X)$ such that $$
e\cdot \Delta_X=z\times X+Z\in \CH_{\dim X}(X\times_k X),
$$
for some cycle $Z$ whose support does not dominate the second factor of $X\times_k X$, see \cite{bloch-srinivas}.
The torsion order of $X$, denoted by $\Tor(X)$, is the smallest positive integer $e$ such that a decomposition as above exists; it is $\infty$ if no such integer exists, see e.g.\ \cite{CL,kahn,Sch-torsion}.
We say that $X$ admits a decomposition of the diagonal if $\Tor(X)=1$.
By the localization exact sequence \cite[\S 1.8]{fulton}, this is equivalent to saying that
$ 
\delta_X\in \im(\CH_0(W_{k(X)})\to \CH_0(X_{k(X)}))
$ 
for a zero-dimensional closed subset $W\subset X$, where $\delta_X\in X_{k(X)}$ denotes the point induced by the diagonal $\Delta_X\subset X\times_k X$. 
This leads to the following simple but useful variant.

\begin{definition} \label{def:decomposition-relative-to-W}
Let $X$ be a variety over a field $k$ and let $\Lambda$ be a ring.
We say that $X$ admits a $\Lambda$-decomposition of the diagonal relative to a closed subset $W\subset X$ if
$$
\delta_X\in \im\left(\CH_0(W_{k(X)},\Lambda)\to \CH_0(X_{k(X)},\Lambda)\right) ,
$$
where $\delta_X$ denotes the diagonal point induced by the diagonal $\Delta_X \subset X \times_k X$.
If $\Lambda = \Z$, we also say that $X$ admits a decomposition (or integral decomposition) of the diagonal relative to $W$. 
\end{definition}

Variants of the notion of a decomposition of diagonal relative to a closed subscheme appear for example in \cite{bloch-srinivas}, \cite[Definition 1.2]{Voisin-CH0}, and \cite[Definition 1.1]{CL}.

\begin{remark}
By the above discussion, $X$ admits a decomposition of the diagonal if and only if it admits a decomposition relative to a closed subset of dimension zero.
\end{remark} 
 
By the localization sequence, the condition on $\delta_X$ in Definition \ref{def:decomposition-relative-to-W} is equivalent to
$$
\delta_X\in \ker\left(\CH_0(X_{k(X)},\Lambda)\to \CH_0(U_{k(X)},\Lambda)\right),
$$
where $U=X\setminus W$. 
It follows that a $\Lambda$-decomposition of the diagonal relative to $W\subset X$ is the same thing as a $\Lambda$-decomposition of the diagonal of $U$, relative to the empty set.
These observations lead  us to the following relative version of the aforementioned torsion order studied for instance in \cite{CL,kahn,Sch-torsion}.

\begin{definition} \label{def:torsion-order-relative-to-W}
    Let $X$ be a variety over a field $k$ and let $\Lambda$ be a ring.
    The $\Lambda$-torsion order of $X$ relative to a closed subset $W\subset X$, denoted by $\Tor^\Lambda(X,W)$, is the order of the element
    $$
    \delta_X|_U=\delta_U \in \CH_0(U_{k(X)},\Lambda) ,
    $$
    where $U=X\setminus W$.
\end{definition}

By definition, $\Tor^\Lambda(X,W)\in \N\cup \{\infty\}$.
If $\Lambda$ has characteristic $c\neq 0$, then $\Tor^\Lambda(X,W)$ divides $c$ and hence is finite.

We remark that the torsion order of proper varieties relative to the empty set has somewhat pathological behaviour. 
For instance, the $\Z$-torsion order of a proper variety with respect to the empty set is always $\infty$, but it may or may not be finite relative to a non-empty closed subset $W\subset X$.

\begin{remark} \label{rem:torsion-order-def}
The $\Lambda$-torsion order of $X$ relative to $W$ is $1$ if and only if $X$ admits a $\Lambda$-decomposition of the diagonal relative to $W$.
Moreover, the $\Lambda$-torsion order of $X$ relative to $W$ is nothing but the $\Lambda$-torsion order of $X\setminus W$, relative to the empty set.
\end{remark}
 
\begin{remark} \label{rem:torsion-order-X-reducible}
   If $X$ is an algebraic scheme over $k$ (not necessarily irreducible) and $W\subset X$ is a closed subset such that $U=X\setminus W$ is integral, then we can, in view of Remark \ref{rem:torsion-order-def}, still define $\Tor^\Lambda(X,W)$ via the order of the element 
    $$
    \delta_U \in \CH_0(U_{k(U)},\Lambda) .
    $$ 
\end{remark} 

\begin{lemma}\label{lem:torsion-order-observations}
Let $X$ be a variety over a field $k$ and let $W\subset X$ be closed.
Then the following hold:
\begin{enumerate}[(a)]
    \item For all $m \in \Z$, $\Tor^{\Z/m}(X,W) \mid \Tor^{\Z}(X,W)$. \label{item:torsion-order-Z/m-Z}
    \item Let $W' \subset W \subset X$ be a closed subset, then $\Tor^\Lambda(X,W) \mid \Tor^\Lambda(X,W')$. \label{item:torsion-order-decrease-for-larger-sets}
    \item $\Tor(X)$ is the minimum of the relative torsion orders $\Tor^{\Z}(X,W)$ where $W\subset X$ runs through all closed subsets of dimension zero.\label{item:3:torsion-order-observations}
    \item If $X$ is proper and $\deg\colon \CH_0(X) \to \Z$ is an isomorphism, then $\Tor(X)=\Tor^{\Z}(X,W)$ for any closed subset $W\subset X$ of dimension zero, which contains a zero-cycle of degree 1.\label{item:4:torsion-order-observations}
\item If $k=\bar k$ is algebraically closed, then $\Tor^\Lambda(X,W)=\Tor^\Lambda(X_L,W_L)$ for any ring $\Lambda$ and any field extension $L/k$.\label{item:0:torsion-order-observations}
\end{enumerate} 
\end{lemma}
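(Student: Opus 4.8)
The plan is to deduce all five items from the definition $\Tor^\Lambda(X,W)=\operatorname{ord}(\delta_U)$ in $\CH_0(U_{k(X)},\Lambda)$, $U:=X\setminus W$, together with the localization sequence and Fulton's specialization map. Items (a) and (b) are pure naturality. For (a), reduction of coefficients is a $\Z$-linear map $\CH_0(U_{k(X)})\to \CH_0(U_{k(X)},\Z/m)$ sending $\delta_U$ to $\delta_U$, so $e\cdot \delta_U=0$ over $\Z$ implies $e\cdot\delta_U=0$ over $\Z/m$, whence the divisibility (the case $\Tor^\Z(X,W)=\infty$ being vacuous). For (b), $W'\subset W$ yields an open immersion $X\setminus W\hookrightarrow X\setminus W'$ under which $\delta_{X\setminus W'}$ restricts to $\delta_{X\setminus W}$, so $e\cdot\delta_{X\setminus W'}=0$ forces $e\cdot\delta_{X\setminus W}=0$.

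For (c) I would use the Bloch--Srinivas correspondence \cite{bloch-srinivas}, in the form extending the case $e=1$ recalled before Definition \ref{def:decomposition-relative-to-W}: for $e\geq 1$, the class $e\cdot \Delta_X$ decomposes in $\CH_{\dim X}(X\times X)$ if and only if $e\cdot \delta_X$ lies in $M_W:=\im(\CH_0(W_{k(X)})\to\CH_0(X_{k(X)}))$ for some zero-dimensional closed $W\subset X$ --- the ``only if'' by restricting a decomposition to the generic point of the second factor (there $Z$ dies and the zero-cycle $z$ is supported on a zero-dimensional closed subset), the ``if'' by spreading out. Together with the localization sequence $\CH_0(W_{k(X)})\to\CH_0(X_{k(X)})\to\CH_0(U_{k(X)})\to 0$ this identifies $\Tor^\Z(X,W)$ with the order of $\delta_X$ modulo $M_W$, and $\Tor(X)$ with the order of $\delta_X$ modulo $\bigcup_W M_W$; since the order of a class modulo a union $\bigcup_i M_i$ is the minimum of its orders modulo the individual $M_i$, (c) follows. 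Item (d) is then a short complement: by (c) it remains to show $\Tor^\Z(X,W)\leq \Tor(X)$, so assume $e:=\Tor(X)<\infty$ and write $e\cdot\Delta_X=z\times X+Z$; restricting to the generic point of the second factor gives $e\cdot\delta_X=z_{k(X)}$ with $\deg z=e$, and since $\deg$ is an isomorphism and the class $[z_W]$ of a degree-one zero-cycle on $W$ is the unique degree-one class, we get $z=e\cdot[z_W]$ in $\CH_0(X)$, hence $e\cdot\delta_X\in M_W$ and $\Tor^\Z(X,W)\mid e$ by localization.

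The main work is (e). The divisibility $\Tor^\Lambda(X_L,W_L)\mid\Tor^\Lambda(X,W)$ is immediate from the flat base-change map $\CH_0(U_{k(X)},\Lambda)\to\CH_0((U_L)_{L(X_L)},\Lambda)$, which carries $\delta_U$ to $\delta_{U_L}$. For the converse I would first use continuity of Chow groups under filtered inverse limits of fields to reduce to $L=k(V)$ finitely generated over $k$, and then induct on $\dim V$; the case $\dim V=0$ is trivial as $k=\bar k$ forces $L=k$. In the inductive step, pick (after replacing $V$ by its normalization) a prime divisor $D\subset V$; then $R:=\mathcal{O}_{U\times_k V,\,U\times_k D}$ is a discrete valuation ring with $\Frac R=L(X_L)$ and residue field $k(D)(X_{k(D)})$, where $\dim D=\dim V-1$. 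Fulton's specialization map for the flat $R$-scheme $U\times_k\Spec R$ then produces $\specialization\colon\CH_0((U_L)_{L(X_L)},\Lambda)\to\CH_0((U_{k(D)})_{k(D)(X_{k(D)})},\Lambda)$ with $\specialization(\delta_{U_L})=\delta_{U_{k(D)}}$ --- the un-barred analogue of the compatibility in Lemma \ref{lem:specialization-functoriality}(2) --- and combining this with the inductive hypothesis for $k(D)$ gives $\Tor^\Lambda(X,W)=\Tor^\Lambda(X_{k(D)},W_{k(D)})\mid\Tor^\Lambda(X_L,W_L)$. The only genuinely delicate point I expect here is the bookkeeping of this step: choosing the discrete valuation rings so that the function field $k(X)$ of $X$ stays intact while the transcendence degree of $L/k$ decreases by one at a time, and checking that $\specialization$ respects the diagonal point; the rest is formal manipulation of the localization sequence.
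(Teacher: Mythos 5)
Your arguments for (a)--(d) are correct and match what the paper has in mind; the authors simply say these ``follow easily from the definitions and the above discussions,'' and your unpacking (restriction of scalars for (a), restriction along the open immersion $X\setminus W\hookrightarrow X\setminus W'$ for (b), the Bloch--Srinivas ``restrict to the generic point of the second factor / spread out'' equivalence plus the localization sequence for (c), and the degree computation for (d)) is exactly the intended content.

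For (e) the paper is terse --- it gives only the hint ``straightforward spreading out and specialization at a $k$-point'' --- and your argument takes a recognizably different route to the same end. Rather than spreading the rational-equivalence data over a finitely generated $k$-algebra $A\subset L$ and specializing at a single closed $k$-point of $\Spec A$ (which exists because $k=\bar k$), you reduce by continuity to $L=k(V)$ finitely generated and then run an induction on $\dim V$, specializing one prime divisor at a time via the DVR $R=\mathcal O_{U\times_k V,\,U\times_k D}$ and Fulton's map. Both are ``spreading out and specialization'' in spirit; the one-step $k$-point version avoids the induction, while your divisorial version stays closer to the form of Lemma~\ref{lem:specialization-functoriality} and makes the DVR explicit at each step. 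The one place you should be a bit more careful is the claim $\specialization(\delta_{U_L})=\delta_{U_{k(D)}}$: this does not literally follow from Lemma~\ref{lem:specialization-functoriality}(2), because your $R$ is not of the form $\mathcal O_{\mathcal Y,Y_0'}$ for a family over a base DVR --- there is no base DVR here, only the field $k$. What makes it work is the direct computation: the graph $\Gamma_f\subset U\times_k\Spec R$ of the morphism $f\colon\Spec R\to U$ (first projection of $U\times_k V$ localized at the generic point of $U\times_k D$) is the closure of $\delta_{U_L}$, and its scheme-theoretic intersection with the special fibre is precisely $\Spec\kappa(R)$, giving $\delta_{U_{k(D)}}$ with multiplicity one. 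You should also record that $R$ is indeed a DVR (it is the localization of the integral scheme $U\times_k V$ at the generic point of the integral Cartier divisor $U\times_k D$, where the uniformizer of $\mathcal O_{V,D}$ still generates the maximal ideal) and that $X_{k(D)}$ is integral (automatic since $k=\bar k$ makes $X$ geometrically integral). With those points supplied, your argument is complete.
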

\begin{proof}
Items \eqref{item:torsion-order-Z/m-Z}--\eqref{item:4:torsion-order-observations} follow easily from the definitions and the above discussions.
To prove item \eqref{item:0:torsion-order-observations}, note that $\Tor^\Lambda(X_L,W_L)\mid \Tor^\Lambda(X,W)$ (even without asking that $k$ is algebraically closed).
The converse divisibility statement follows via a straightforward  ``spreading out and specialization at a $k$-point'' argument.
This concludes the proof of the lemma.
\end{proof} 

The next lemma explains the geometric meaning of $\Lambda$-torsion orders.

\begin{lemma} \label{lem:weak-decomposiiton-of-diagonal}
Let $X$ be a proper variety over a field $k$ and let $\Lambda$ be a ring.
Assume that $X$ admits a resolution of singularities or that the exponential characteristic of $k$ is invertible in $\Lambda$. 
If for some closed subset $W\subset X$ the complement $U\coloneq X\setminus W$ is smooth, then 
$\CH_0(U_L,\Lambda)$
is $\Tor^\Lambda(X,W)$-torsion for all field extensions $L/k$.
\end{lemma}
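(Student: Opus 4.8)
The statement to prove is Lemma \ref{lem:weak-decomposiiton-of-diagonal}: if $X$ is proper over $k$, $U = X\setminus W$ is smooth, and either $X$ admits resolution of singularities or the exponential characteristic of $k$ is invertible in $\Lambda$, then $\CH_0(U_L,\Lambda)$ is killed by $e:=\Tor^\Lambda(X,W)$ for every field extension $L/k$. The plan is to run the standard Bloch--Srinivas ``decomposition of the diagonal forces torsion zero-cycles'' argument, but carried out relative to $W$ and with $\Lambda$-coefficients. I would first unwind the definition: by Definition \ref{def:torsion-order-relative-to-W}, $e\cdot \delta_U = 0$ in $\CH_0(U_{k(X)},\Lambda)$, where $\delta_U$ is the image of the diagonal point. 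The point of the proof is to propagate this one relation at the generic point to a universal statement over all $L$.

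\textbf{Main steps.} First I would spread out: the vanishing $e\cdot\delta_U = 0 \in \CH_0(U_{k(X)},\Lambda)$ together with the localization sequence \cite[\S 1.8]{fulton} means there is a dense open $V\subset X$ (we may take $V\subset U$) such that, writing $\Gamma$ for the closure of the diagonal in $X\times X$ restricted over $V$ in the second factor, one has a rational-equivalence
$$
e\cdot \Gamma = \Gamma' \in \CH_{\dim X}\bigl((X\times V)\otimes\Lambda\bigr),
$$
where $\Gamma'$ is supported on $D\times V$ for some closed $D\subsetneq X$ (indeed $D\subset W\cup(X\setminus V)$). This is the usual ``generic-point-to-open-set'' passage via the exact sequence $\CH_{\dim X}(X\times(X\setminus V))\to \CH_{\dim X}(X\times X)\to \CH_{\dim X}(X\times V)\to 0$ combined with continuity of Chow groups of the function field $k(X)=\varinjlim_V \mathcal O(V)$. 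Next, base change everything to $L$ and pull back along the morphism $\operatorname{Spec} L(U)\to \operatorname{Spec} k(X)$ induced by a point; more precisely, for a point of $U_L$ one takes the correspondence action. The correspondence $\Gamma$, being the (restricted) diagonal over $V$, acts as the identity on $\CH_0(U_L,\Lambda)$ \emph{up to} cycles supported over $X\setminus V$, while $\Gamma'$, being supported on $D\times V$ with $\dim D<\dim X$, acts through $\CH_0(D_L,\Lambda)$; but as $D$ may still meet $U$ one reorganizes so the ``error'' part lands in the image of $\CH_0(W_L,\Lambda)$, which dies upon restriction to $U_L$. The conclusion is $e\cdot z = 0$ in $\CH_0(U_L,\Lambda)$ for every $z$, i.e.\ $\CH_0(U_L,\Lambda)$ is $e$-torsion.

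\textbf{Where the hypotheses enter.} The subtlety — and the reason for the ``resolution of singularities or $\Lambda$ invertible'' dichotomy — is that for the correspondence machinery (composition of correspondences, the action of $\Gamma$ on zero-cycles, the identity $\Gamma_{\mathrm{diag}}\circ\alpha=\alpha$) one needs either $X\times X$ to be smooth, or to have enough of Fulton's intersection theory with $\Lambda$-coefficients to make sense of the relevant intersection products on the singular scheme $X$. Since only $U$ is assumed smooth, one works on $U\times U$ (smooth, so correspondences act) for the identity part, and handles the passage from $X\times X$ to $U\times U$ via the localization sequence; the smoothness of $U$ is exactly what lets the restricted diagonal act as identity on $\CH_0(U_L,\Lambda)$. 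Alternatively, if $X$ admits a resolution $\widetilde X\to X$, one pulls the decomposition back to $\widetilde X$ (which is smooth and proper), runs the classical argument there for $\CH_0(\widetilde X_L,\Lambda)$, and then pushes forward, using that $\widetilde X\to X$ is an isomorphism over $U$ so that $\CH_0(U_L,\Lambda)$ is a quotient of $\CH_0(\widetilde X_L,\Lambda)$. The invertibility of the exponential characteristic in $\Lambda$ is the substitute for resolution: it allows one to use alterations or de Jong-type arguments / the fact that $\CH_0(\cdot,\Lambda)$ descends along proper surjective morphisms after inverting $p$, so that singular $X$ behaves, $\Lambda$-linearly, like a smooth one. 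I expect the main obstacle to be bookkeeping the ``error terms'': tracking carefully that every contribution outside the restricted diagonal lands, after restriction to $U_L$, in the image of $\CH_0(W_L,\Lambda)\to\CH_0(X_L,\Lambda)$ and hence vanishes in $\CH_0(U_L,\Lambda)$ — i.e.\ making precise that ``relative to $W$'' in the hypothesis matches ``$U=X\setminus W$'' in the conclusion. This is routine but needs the localization sequence applied one more time on the product.
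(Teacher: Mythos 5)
Your conceptual plan is the right one: spread out the vanishing $e\cdot\delta_U = 0 \in \CH_0(U_{k(X)},\Lambda)$ into a cycle relation on a product, apply it as a correspondence to an arbitrary $z\in\CH_0(U_L,\Lambda)$, and use localization to discard the $W$-supported error. You also correctly flag the role of resolutions and alterations as substitutes for smoothness of $X$. These are indeed the ingredients of the paper's proof.

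There are, however, two genuine gaps. First, the route you present as primary — letting correspondences act ``on $U\times U$ (smooth, so correspondences act) for the identity part'' — does not go through as stated, because $U$ is an open subset of a proper variety and hence not proper: the proper pushforward $p_\ast$ along the first projection, needed for the correspondence action $\Gamma^\ast z = p_\ast(\Gamma\cdot q^\ast z)$, is unavailable on $U\times U$. Correspondence calculus needs a smooth \emph{and proper} ambient product. The paper's fix, which you mention only as an ``alternative,'' is to choose a proper alteration $\tau\colon X'\to X$ via Temkin (of degree $1$ if $X$ is resolvable, and of degree a power of the exponential characteristic otherwise), transport the decomposition to the smooth proper $X'\times X'$, and run the whole argument there. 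Second, even in the resolution case you cannot simply run ``the classical argument for $\CH_0(\widetilde X_L,\Lambda)$'' and then push forward: the hypothesis is only a decomposition of $\delta_X$ \emph{relative to} $W$, not a genuine decomposition of the diagonal of $\widetilde X$, so what one actually extracts is that $(\deg\tau)^2\cdot m\cdot z$ lies in the image of $\CH_0(W,\Lambda)\to\CH_0(X,\Lambda)$, and the desired conclusion lives in $\CH_0(U_L,\Lambda)$ only after one more application of localization — which is precisely why the statement is about $\CH_0(U_L,\Lambda)$ and not $\CH_0(X_L,\Lambda)$. Finally, your sketch omits two necessary technical points that the paper makes explicit: Chow's moving lemma to arrange $\supp z\subset V$ (the \'etale locus of $\tau$) and $\supp z\cap\tau(D)=\emptyset$ before the computation, and the tracking of the factor $(\deg\tau)^2$, which is exactly where the hypothesis ``exponential characteristic invertible in $\Lambda$'' is used at the last step. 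With these corrections your argument coincides with the paper's.
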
 
\begin{proof}
Since $\Tor^\Lambda(X_L,W_L)$ divides $\Tor^\Lambda(X,W)$,
we can assume without loss of generality that $L=k$. 
By work of Temkin \cite{temkin}, we can pick an alteration $\tau:X'\to X$ whose degree is a power of the exponential characteristic.
Moreover, we can choose $\tau$ to be of degree $1$ if $X$ admits a resolution of singularities.
We then let $W'\coloneq \tau^{-1}(W)$ and $U'=X'\setminus W'$.
We further let $V\subset U$ be the locus over which $\tau$ is \'etale and define $V'\coloneq \tau^{-1}(V)$.

Let $m\coloneq \Tor^\Lambda(X,W)$.
By assumptions, $m\cdot \delta_X\in \CH_0(X_{k(X)},\Lambda)$ vanishes when restricted to $U$. 
Hence,
$$
m\cdot \tau^\ast \delta_U=0\in \CH_0(U'_{k(U)},\Lambda).
$$
The base change of this class to the field extension $k(U')$ of $k(U)$ still vanishes.
If we spread this out and use the localization sequence, we find that
\begin{align} \label{eq:Gamma_tau}
m\cdot \Gamma = [Z_1]+[Z_2]\in \CH_{\dim X'}(X'\times X',\Lambda),
\end{align}
where $\Gamma\subset X'\times X'$ denotes the closure of the locus 
$$
(\tau|_{V'}\times \tau|_{V'})^{-1}(\Delta_V) = \{(x,y)\in V'\times V'\mid \tau(x)=\tau(y)\in V\}\subset X'\times X' ,
$$  
and $Z_1$, $Z_2$ denote some cycles with
$$
\supp Z_1\subset W'\times X'\ \ \text{and}\ \ \supp Z_2\subset X'\times D
$$ for some nowhere dense closed subset $D\subsetneq X'$.  

Let now $z\in \CH_0(U,\Lambda)$.
By Chow's moving lemma, we can assume that $\supp z\subset V$ and $\supp z\cap \tau(D)=\emptyset$.
We aim to show $m\cdot z=0\in \CH_0(U,\Lambda)$.
To this end, let $p:X'\times X'\to X'$ and $q:X'\times X'\to X'$ denote the projections to the first and second factors, respectively.  
Since $X'$ is smooth and proper over $k$, we can define pullbacks along correspondences in $\CH^{\dim X}(X'\times X',\Lambda)$.

By assumption, the support of $z$ lies in the locus over which $\tau$ is \'etale: $\supp z\subset V$.
We can thus define the zero-cycle  $\tau^\ast z$ on $X'$ on the level of cycles via the preimages of the points in the support of $z$.  
We then apply the correspondence $\Gamma$ and get 
$$
m\cdot \Gamma^\ast(\tau^\ast z)=p_\ast (m\Gamma \cdot q^\ast \tau^\ast z)\in \CH_0(X',\Lambda) .
$$ 
Since the support of $z$ is contained in the locus $V$
 over which $\tau$ is \'etale, a direct computation shows that the above zero-cycle has the property that
\begin{align} \label{eq:Gamma-lem-weak-dec}
 \tau_\ast (m\cdot \Gamma^\ast(\tau^\ast z))=(\deg \tau )^2 \cdot m\cdot  z\in \CH_0(X,\Lambda).
\end{align}
Conversely, by \eqref{eq:Gamma_tau}, we know that this cycle is rationally equivalent to
$$
\tau_\ast( \Gamma^\ast(m\cdot \tau^\ast z))= \tau_\ast ( [Z_1]^\ast(\tau^\ast z)+[Z_2]^\ast(\tau^\ast z))\in \CH_0(X,\Lambda).
$$
Since $ \supp Z_2\subset X'\times D$ and $\supp z\cap \tau(D)=\emptyset$, we have $[Z_2]^\ast(\tau^\ast z)=0$ and so
$$
\Gamma ^\ast(m\cdot \tau^\ast z)=[Z_1]^\ast(\tau^\ast z) \in \CH_0(X',\Lambda).
$$
Since $\supp Z_1\subset W'\times X'$, the above class lies in the image of $\CH_0(W',\Lambda)\to  \CH_0(X',\Lambda)$.
Hence,
$$
\tau_\ast \Gamma^\ast(m\cdot \tau^\ast z)\in \im(\CH_0(W,\Lambda)\to \CH_0(X,\Lambda)) .
$$ 
By \eqref{eq:Gamma-lem-weak-dec}, it follows that 
$$
(\deg \tau )^2 \cdot m\cdot  z \in \im (\CH_0(W,\Lambda)\longrightarrow \CH_0(X,\Lambda)) .
$$ 
By the localization sequence, this implies
$$
\deg(\tau)^2 \cdot m\cdot z|_U=0\in \CH_0(U,\Lambda) .
$$
Since $\deg(\tau)^2$ is either one or a power of the exponential characteristic of $k$, it is invertible in $\Lambda$ by assumption. 
We deduce, as we want, that $z \in \CH_0(U,\Lambda)$ is $m$-torsion. 
This concludes the proof of the lemma.
\end{proof}

We will use that 
$\Lambda$-torsion orders relative to closed subsets are well-behaved under specialization, as shown by the following lemma. 

\begin{lemma} \label{lem:Lambda-torsion-order-specialize}
Let $R$ be a discrete valuation ring with fraction field $K$ and  residue field $k$.
Let $\mathcal X\to \Spec R$ be a separated flat $R$-scheme of finite type with  generic fibre $X=\mathcal X\times  K$ and  special fibre $Y=\mathcal X\times k$.
Let $\bar X=X\times \bar K$ and $\bar Y=Y\times \bar k$ be the base changes to algebraic closures of $K$ and $k$, respectively. 
Let $W_{\mathcal X}\subset \mathcal X$ be a closed subset with $W_{\bar X}\coloneq W_{\mathcal X}\times \bar K\subset \bar X$ and $W_{\bar Y}\coloneq W_{\mathcal X}\times \bar k\subset \bar Y$.
Assume that the fibres of $U=\mathcal X\setminus W_{\mathcal X}$ over $R$  are non-empty and geometrically integral. 

Then we have
$$
\Tor^\Lambda(\bar Y,W_{\bar Y}) \mid \Tor^\Lambda(\bar X,W_{\bar X}) .
$$
\end{lemma}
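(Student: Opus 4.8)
The plan is to reduce the statement to a known specialization property of Fulton's map, which the paper has already packaged as Lemma \ref{lem:specialization-functoriality}. Set $m := \Tor^\Lambda(\bar X, W_{\bar X})$; we must show that $m \cdot \delta_{\bar Y}|_{U_{\bar Y}} = 0$ in $\CH_0(U_{\bar Y} \times_{\bar k} \bar k(U_{\bar Y}), \Lambda)$, where $U_{\bar Y} = \bar Y \setminus W_{\bar Y}$. First I would record that, by hypothesis, the $R$-scheme $\mathcal U := \mathcal X \setminus W_{\mathcal X} \to \Spec R$ is flat (being an open subscheme of a flat scheme) with non-empty geometrically integral generic fibre $U_{\bar X}$ and special fibre $U_{\bar Y}$; in particular $U_{\bar Y}$ has a unique geometrically integral component $U_{\bar Y}' = U_{\bar Y}$ itself, and $A = \mathcal O_{\mathcal U, U_{\bar Y}}$ is a DVR. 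Replacing $R$ by a suitable localization/extension as in the discussion preceding Lemma \ref{lem:specialization-functoriality}, I would then apply that lemma with $\mathcal X = \mathcal Y = \mathcal U$, the diagonal case, to get a specialization map
$$
\specialization \colon \CH_0\bigl(U_{\bar X} \times_K \bar K(U_{\bar X}), \Lambda\bigr) \longrightarrow \CH_0\bigl(U_{\bar Y} \times_k \bar k(U_{\bar Y}), \Lambda\bigr)
$$
sending $\delta_{U_{\bar X}}$ to $\delta_{U_{\bar Y}}$.

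Next I would exploit the linearity and functoriality of $\specialization$. By Definition \ref{def:torsion-order-relative-to-W}, $m \cdot \delta_{U_{\bar X}} = 0$ in $\CH_0(U_{\bar X} \times_K \bar K(U_{\bar X}), \Lambda)$ — here I should be careful that $\bar X$ in the statement is $X \times_K \bar K$, whereas the diagonal point lives over $\bar K(X)$, but since $U_{\bar X}$ is geometrically integral, $\bar K(U_{\bar X}) = \overline{K(U_X)}$ and the element $\delta_{\bar X}|_{U_{\bar X}}$ is exactly $\delta_{U_{\bar X}}$ up to the identification used throughout Section \ref{sec:torsion-order}. Applying $\specialization$ to the relation $m \cdot \delta_{U_{\bar X}} = 0$ and using $\Z$-linearity (hence $\Lambda$-linearity after tensoring) of Fulton's specialization map, together with $\specialization(\delta_{U_{\bar X}}) = \delta_{U_{\bar Y}}$ from item (2) of Lemma \ref{lem:specialization-functoriality}, gives $m \cdot \delta_{U_{\bar Y}} = 0$ in $\CH_0(U_{\bar Y} \times_k \bar k(U_{\bar Y}), \Lambda)$. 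This says precisely that $\Tor^\Lambda(\bar Y, W_{\bar Y}) \mid m = \Tor^\Lambda(\bar X, W_{\bar X})$, which is the claim.

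The step I expect to be the main obstacle is the bookkeeping around base change of the DVR and the identification of the various diagonal points, rather than any substantive new argument. Concretely: Lemma \ref{lem:specialization-functoriality} is stated for $\mathcal O_{\mathcal Y, Y_0'}$ a DVR, and one has to check that shrinking $\mathcal X$ to $\mathcal U$ preserves flatness and finite type, that the generic fibre stays geometrically integral (given), and that no component issues arise in the special fibre of $\mathcal U$. One also needs to match the notational conventions: the lemma produces specialization into $\CH_i(X_0 \times_k \bar k(Y_0'), \Lambda)$ with $\mathcal X = \mathcal Y$ forcing $X_0 = U_{\bar Y}$, and one must verify this agrees with the target group in Definition \ref{def:torsion-order-relative-to-W} for $\bar Y$. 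A minor point is that the statement base changes $W_{\mathcal X}$ to $\bar K$ resp.\ $\bar k$; since $\Tor^\Lambda$ over an algebraically closed field is insensitive to further field extension by Lemma \ref{lem:torsion-order-observations}\,\eqref{item:0:torsion-order-observations}, and since taking complements commutes with base change, these reductions are harmless. Once these identifications are in place, the proof is a one-line application of linearity of $\specialization$.
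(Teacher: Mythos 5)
Your proposal is essentially the same as the paper's: reduce to the complement $\mathcal U=\mathcal X\setminus W_{\mathcal X}$, apply Lemma \ref{lem:specialization-functoriality} with $\mathcal X=\mathcal Y=\mathcal U$ to the local ring of $\mathcal U$ at the generic point of the (geometrically integral) special fibre, and compare the diagonal points by linearity of specialization. The one difference is organizational: the paper first reduces to $k=\bar k$ (by inflation of local rings via Bourbaki, invoking Lemma \ref{lem:torsion-order-observations}\eqref{item:0:torsion-order-observations}) and then to a finite extension $K'/K$ realizing $m=\Tor^\Lambda(X_{K'},W_{X_{K'}})$ (by localizing the integral closure of $R$ in $K'$), after which it applies the bare Fulton specialization map $\CH_0(U_\eta\times K(U_\eta),\Lambda)\to\CH_0(U_0\times k(U_0),\Lambda)$ directly; you instead invoke the ``barred'' form of Lemma \ref{lem:specialization-functoriality} in one shot, which packages the two closures. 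Both routes work, but be careful with one claim: you write $\bar K(U_{\bar X})=\overline{K(U_X)}$, which is false in general. The notation $\bar K(U_\eta)$ in Lemma \ref{lem:specialization-functoriality} denotes the function field of the geometric generic fibre $U_\eta\times_K\bar K$, i.e.\ the compositum $\bar K\cdot K(U_\eta)$, which is a proper subfield of the algebraic closure $\overline{K(U_\eta)}$ once $\dim U_\eta>0$. Had the lemma genuinely landed in $\CH_0(U_0\times_k\overline{k(U_0)},\Lambda)$ one would need a nontrivial descent from $\overline{k(U_0)}$ back down to $\bar k\cdot k(U_0)$; with the correct reading (function field of the base change) the source and target are exactly the groups that compute $\Tor^\Lambda(\bar X,W_{\bar X})$ and $\Tor^\Lambda(\bar Y,W_{\bar Y})$, so your one-line conclusion goes through cleanly.
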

\begin{proof}
By inflation of local rings \cite[Chapter IX, Appendice \S 2, Corollaire du Th\'eor\`eme 1 and Exercice 4]{bourbaki},
there is an unramified extension of discrete valuation rings $R'/R$ such that the residue field of $R'$ is $\bar k$.
Up to a base change along $R'/R$ we can thus assume that $k$
 is algebraically closed.
 (This uses item \eqref{item:0:torsion-order-observations} in Lemma \ref{lem:torsion-order-observations}.)

    Let $m\coloneq \Tor^\Lambda(\bar X,W_{\bar X})$.
    Then there is a finite extension $K'/K$ such that the $\Lambda$-torsion order of $X\times K'$ relative to $W_{X}\times K'$ is $m$.
    Let $R_{K'}\subset K'$ be the integral closure of $R$ in $K'$ and let $R'\subset K'$ be the localization of $R_{K'}$ at a maximal ideal lying over the maximal ideal of $R$.
    Then $R'$ is a discrete valuation ring with $\Frac R'=K'$ and $R\subset R'$.
    Up to a base change along $\Spec R'\to \Spec R$, we can then assume that $K=K'$.
    Hence, $m=\Tor^\Lambda( X,W_{X})$ and it remains to show that 
   \begin{align} \label{eq:m-divides-Tor(Y)}
\Tor^\Lambda( Y,W_{Y})\mid m  .
\end{align}

 Let $U=\mathcal X\setminus W_{\mathcal X}$ with generic fibre $U_\eta\coloneq U\times K$ and special fibre $U_0\coloneq U\times k$.
   By assumptions, $U\to \Spec R$ is flat with non-empty geometrically integral fibres. 
    Let $A$ be the local ring of $U$ at the generic point of the special fibre $U_0$.
    Since $U_0$ is integral, $A$ is a discrete valuation ring with residue field
    $k(U_0)$ and fraction field $K(U_\eta)$. 
    We then apply Fulton's specialization map on Chow groups to the base change $U_A\coloneq U\times_RA$ and obtain a group homomorphism
    $$
    \operatorname{sp}: \CH_0(U_\eta\times K(U_\eta),\Lambda)\longrightarrow \CH_0(U_0\times k(U_0),\Lambda)
    $$
    with $\operatorname{sp}(\delta_{U_\eta})=\delta_{U_0}$, see Lemma \ref{lem:specialization-functoriality}. 
    This implies \eqref{eq:m-divides-Tor(Y)}, because $m=\Tor^\Lambda( X,W_{X})$ is the order of $\delta_{U_\eta}$, while $\Tor^\Lambda( Y,W_{Y})$ is the order of $\delta_{U_0}$.
\end{proof}

\section{Obstruction map}

    Let $Y = \bigcup_{i \in I} Y_i$ be an snc scheme over $k$ (see Definition \ref{def:snc}).
    We say that $Y$ has no triple intersections if $Y_i \cap Y_j \cap Y_l = \emptyset$ for all pairwise different $i,j,l \in I$.
    
    The following map is the key player in Theorem \ref{thm:Obstruction-intro}, stated in the introduction.

    \begin{definition}
        Let $Y = \bigcup_{i \in I} Y_i$ be an snc scheme over $k$ which has no triple intersections. We fix a total order `$<$' on $I$. Let $\Lambda$ be a ring. Then we define the obstruction map
        \begin{equation}\label{eq:Psi}
            \Psi_Y^\Lambda \colon \bigoplus\limits_{l \in I} \CH_1(Y_l,\Lambda) \longrightarrow \bigoplus\limits_{\substack{i,j \in I \\ i < j}} \CH_0(Y_{ij},\Lambda), \quad (\gamma_l)_l \mapsto \left(\gamma_{i}|_{Y_{ij}} - \gamma_{j}|_{Y_{ij}} \right)_{i,j},
        \end{equation}
        where $Y_{ij} = Y_i \cap Y_j$ for $i,j \in I$.
    \end{definition}

In the above definition, $\gamma_i|_{Y_{ij}}$ is the intersection with the divisor $Y_{ij} \subset Y_i$, see \cite[\S 2.3]{fulton}. 
In particular, $\gamma_{i}|_{Y_{ij}}$ is represented by a $0$-cycle supported on the set-theoretic intersection $|\gamma_j|\cap Y_{ij}$ and hence can be viewed as a class in $\CH_0(Y_{ij},\Lambda)$, see also \cite[Convention 1.4]{fulton}.
We further note that we have $\CH_0(Y_{ij},\Lambda) = 0$ if $Y_{ij} = \emptyset$ and we set $\gamma_{i}|_{Y_{ij}}=0$ in this case.

    \begin{theorem}\label{thm:obstruction-general}
    Let $R$ be a discrete valuation ring with algebraically closed residue field $k$ and fraction field $K$. Let $\Lambda$ be a ring of positive characteristic $c \in \Z_{\geq 1}$ such that the exponential characteristic of $k$ is invertible in $\Lambda$. Let $\mathcal{X}  \to \Spec R$ be a strictly semi-stable $R$-scheme (see Definition \ref{def:snc}) with geometrically integral generic fibre $X  = \mathcal{X}  \times_R K$ and special fibre $Y  = \mathcal{X}  \times_R k$. Assume that $Y = \bigcup_{i \in I} Y_i$ has no triple intersections and fix a total order `$<$' on $I$. Then the cokernel of the map
    $$
        \Psi^\Lambda_{Y_L} \colon \bigoplus\limits_{l \in I} \CH_1(Y_l \times_k L,\Lambda) \longrightarrow \bigoplus\limits_{\substack{i,j \in I \\ i < j}} \CH_0(Y _{ij} \times_k L,\Lambda)
    $$
    from \eqref{eq:Psi} is $\Tor^\Lambda(\bar{X},\emptyset)$-torsion for every field extension $L/k$.
    \end{theorem}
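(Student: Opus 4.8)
The plan is to deduce the statement from a single geometric input — the specialization of a decomposition of the diagonal of the geometric generic fibre into a correspondence living on the special fibre — combined with the intersection-theoretic combinatorics of $Y$; we may assume $m:=\Tor^\Lambda(\bar X,\emptyset)<\infty$, there being nothing to prove otherwise. \emph{First I would reduce to $L=k$.} For a general field extension $L/k$, inflation of local rings (Bourbaki, as in the proof of Lemma~\ref{lem:Lambda-torsion-order-specialize}) produces an extension of discrete valuation rings $R_L/R$ whose residue field is $L$; base change along $R_L/R$ carries $\mathcal X$ to a strictly semi-stable $R_L$-scheme with special fibre $Y\times_k L$, and since $\bar K$ is already algebraically closed, Lemma~\ref{lem:torsion-order-observations}\eqref{item:0:torsion-order-observations} shows that the geometric generic fibre of the new family still has $\Lambda$-torsion order $m$. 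As $\Psi$ is compatible with this base change, it suffices to prove that $\coker(\Psi^\Lambda_{Y})$ is $m$-torsion under the extra assumption $L=k=\bar k$.

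\emph{Next I would specialize the diagonal.} By definition of $m$, the diagonal point of $\bar X$ is $m$-torsion in the Chow group of zero-cycles of $\bar X$ over its own function field, so after spreading out there is a cycle $\bar\Gamma$ on $\bar X\times_{\bar K}\bar X$ whose support does not dominate the second factor and with $[\bar\Gamma]=m[\Delta_{\bar X}]$ in $\CH_{\dim X}(\bar X\times_{\bar K}\bar X,\Lambda)$. Applying to the flat $R$-scheme $\mathcal X\times_R\mathcal X$ the specialization map $\specialization\colon\CH_\ast(\bar X\times_{\bar K}\bar X,\Lambda)\to\CH_\ast(Y\times_k Y,\Lambda)$ of Section~\ref{subsec:fulton}, and using that the Zariski closure of $\Delta_X$ in $\mathcal X\times_R\mathcal X$ is the relative diagonal $\Delta_{\mathcal X}\simeq\mathcal X$ — which is flat over $R$, hence restricts to the cycle $[\Delta_Y]=\sum_{i}[\Delta_{Y_i}]$ on the Cartier special fibre $Y\times_k Y$ (each $Y_i$ being of dimension $\dim Y$, as a Cartier divisor on the regular scheme $\mathcal X$) — I obtain an identity
$$
m\sum_{i\in I}[\Delta_{Y_i}]=\specialization([\bar\Gamma])=:[W]\in\CH_{\dim X}(Y\times_k Y,\Lambda),\qquad\supp W\subseteq Y\times_k E,
$$
for a closed subset $E\subset Y$ with $\dim E<\dim Y$ (coming from the fact that $\bar\Gamma$ does not dominate the second factor). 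This is the analogue, with coefficients and for an arbitrary number of components, of the key specialization in \cite{PavicSch,Lange}.

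\emph{Finally I would convert this identity into surjectivity of $\Psi^\Lambda_Y$ modulo $m$ — and this is the step I expect to be the main obstacle.} Following the method of \cite{PavicSch,Lange}, one translates the cycle identity above, pair by pair, into the statement that for every $i_0<j_0$ and every zero-cycle on $Y_{i_0j_0}$ (which, as $k=\bar k$, reduces to a $k$-rational point) $m$ times its class lies in the image of $\Psi^\Lambda_Y$, i.e.\ can be written as $\gamma_{i_0}\cdot_{Y_{i_0}}Y_{i_0j_0}-\gamma_{j_0}\cdot_{Y_{j_0}}Y_{i_0j_0}$ (and zero in the other slots) for suitable $1$-cycles $\gamma_l\in\CH_1(Y_l,\Lambda)$. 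The two structural inputs of strict semi-stability that make this work are: (i) $\sum_iY_i=\operatorname{div}(t)$ is principal for a uniformizer $t$, whence $Y_i\cdot_{\mathcal X}Y_i=-\sum_{j\ne i}[Y_{ij}]$ inside $\CH_{\dim X-1}(Y_i)$; and (ii) the absence of triple intersections forces $[Y_{ij}]\cdot_{\mathcal X}Y_l=0$ for $l\notin\{i,j\}$, which is exactly the vanishing that makes the obstruction decouple along the edges of the dual complex of $Y$. The contribution of the error term $[W]$ (supported on $Y\times_k E$ with $\dim E<\dim Y$) is absorbed by a devissage over the strata of $Y$, i.e.\ a Noetherian induction on $\dim Y$ (base case $\dim Y=0$ trivial), using at each stage that the relative $\Lambda$-torsion orders of the strata of $Y$ divide $m$, which follows from Lemma~\ref{lem:Lambda-torsion-order-specialize} applied with $W_{\mathcal X}$ the union of the irrelevant components with the singular locus. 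I expect the real work — as in \cite{PavicSch,Lange} — to lie precisely in making this last translation rigorous: controlling the a priori uncontrolled error $[W]$, verifying that all error loci are strictly lower-dimensional so that the induction closes, checking the compatibility with the coefficient ring $\Lambda$ (the exponential-characteristic hypothesis entering at the points where Lemma~\ref{lem:weak-decomposiiton-of-diagonal} or an alteration is invoked), and noting that the same decoupling fails as soon as there are triple intersections, which is why the method does not extend to deeper strata (cf.\ Remark~\ref{rem:triple-intersection}).
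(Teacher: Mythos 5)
Your strategy diverges from the paper's in a way that exposes a genuine gap, and the gap is precisely the step you yourself flag as the ``main obstacle.'' The intersection-theoretic translation you sketch in the final paragraph --- specializing $m\Delta_{\bar X}$ to $m\sum_i[\Delta_{Y_i}]$ plus an error term, and then decoupling along edges via $Y_i\cdot_{\mathcal X}Y_i=-\sum_{j\ne i}[Y_{ij}]$ and the vanishing of triple intersections --- is exactly the mechanism behind the map $\Phi^\Lambda_Y$ of \cite{PavicSch}, whose target is $\bigoplus_j\CH_0(Y_j,\Lambda)$. Pushed through carefully, it yields that $\coker\Phi^\Lambda_Y$ is $m$-torsion. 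But the theorem concerns $\Psi^\Lambda_Y$, whose target is $\bigoplus_{i<j}\CH_0(Y_{ij},\Lambda)$, and the passage from a statement about $\Phi$ to one about $\Psi$ is not formal. It is the content of Proposition~\ref{prop:basechange+resolution-general}, which requires a ramified base change $R'/R$ of index $r$ divisible by $c=\operatorname{char}\Lambda$, a resolution of $\mathcal X\times_R R'$ by blowing up non-Cartier components, and the identity $\sum_{n=1}^{r-1}n\cdot\Phi^\Lambda_{Y',Y'_{(e,n)}}(\gamma)=\Psi^\Lambda_{Y,Y_e}(q_\ast\gamma)$, which holds only because $c\mid r$. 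Your argument never invokes the hypothesis that $\Lambda$ has positive characteristic; that is the tell-tale sign something essential is missing, since the paper explicitly records that the $\Phi$-to-$\Psi$ comparison is unknown integrally (see the remark after Proposition~\ref{prop:basechange+resolution-general}).

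Beyond that, the paper's actual proof also avoids specializing a decomposition of the diagonal: after the reduction to $\Phi$ on the resolved base change $\tilde{\mathcal X}$, it lifts an arbitrary collection of zero-cycles on the special fibre to horizontal $1$-cycles via Hensel's lemma (\cite[Theorem 18.5.17]{EGA4.4}), multiplies by $m$, observes using Lemma~\ref{lem:weak-decomposiiton-of-diagonal} that this dies on the generic fibre because $\CH_0$ there is $m$-torsion, and then applies localization and Lemma~\ref{lem:alternativePhi} to land in $\operatorname{im}\Phi$. This is cleaner than your diagonal-specialization route and sidesteps the ``error term $[W]$'' and the Noetherian induction you propose to absorb it --- neither of which appears in the paper, and neither of which is fleshed out in your sketch. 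So the proposal does not close: the key step (how the specialized cycle identity produces elements of $\operatorname{im}\Psi$, rather than $\operatorname{im}\Phi$, and what role the characteristic of $\Lambda$ plays) is absent.
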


    Note that the family $\mathcal{X} \to \Spec R$ in the above theorem is not assumed to be proper.
    We can always choose a relative Nagata compactification of this morphism, see  \cite[\href{https://stacks.math.columbia.edu/tag/0F41}{Tag 0F41}]{stacks-project}.
    Replacing $\Tor^\Lambda(\bar{X},\emptyset)$ by the relative torsion order of the compactification (cf.\ Definition \ref{def:torsion-order-relative-to-W} and Remark \ref{rem:torsion-order-def}), we can then rephrase the above theorem  as follows. 

    \begin{theorem}\label{thm:obstruction-proper}
        Let $R$ be a discrete valuation ring with algebraically closed residue field $k$ and fraction field $K$. Let $\Lambda$ be a ring of positive characteristic $c \in \Z_{\geq 1}$ such that the exponential characteristic of $k$ is invertible in $\Lambda$. Let $\mathcal{X} \to \Spec R$ be a proper flat 
        $R$-scheme with geometrically integral generic fibre $X=\mathcal X\times_RK$ and special fibre $Y=\mathcal X\times_Rk$.
        Let $W_{\mathcal{X}} \subset \mathcal{X}$ be a closed subscheme and let $W_X \coloneq  W_{\mathcal{X}} \cap X$ and $W_Y \coloneq  W_{\mathcal{X}} \cap Y$ be the respective intersections (i.e.\ fibre products). 
        Assume that the following conditions are satisfied:
        \begin{enumerate}[(1)]
            \item $\mathcal{X}^\circ \coloneq  \mathcal{X} \setminus W_{\mathcal{X}}$ is a strictly semi-stable $R$-scheme (see Definition \ref{def:strictly-semi-stable}); \label{item:thm:obstruction-proper-strictlysemistable}
            \item $Y^\circ \coloneq  Y \setminus W_Y = \bigcup\limits_{i \in I} Y_i^\circ$ has no triple intersections. \label{item:thm:obstruction-proper-3intersection}
        \end{enumerate}
        Given a total order `$<$' on $I$, the cokernel of the map
        $$
            \Psi^\Lambda_{Y_L^\circ} \colon \bigoplus\limits_{l \in I} \CH_1(Y_l^\circ \times_k L,\Lambda) \longrightarrow \bigoplus\limits_{\substack{i,j \in I \\ i < j}} \CH_0(Y_{ij}^\circ \times_k L,\Lambda)
        $$
        from \eqref{eq:Psi} is $\Tor^\Lambda(\bar{X},W_{\bar X})$-torsion for every field extension $L/k$, where $W_{\bar{X}} \coloneq  W_{X} \times_K \bar{K}$.
\end{theorem}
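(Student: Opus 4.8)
The plan is to deduce Theorem~\ref{thm:obstruction-proper} from Theorem~\ref{thm:obstruction-general} by a localization-and-induction argument that moves the information from the smooth locus $\mathcal{X}^\circ$ back to the full (possibly very singular) family $\mathcal{X}$. First I would observe that the hypothesis says $\mathcal{X}^\circ\to\Spec R$ is strictly semi-stable, so Theorem~\ref{thm:obstruction-general} applies verbatim to $\mathcal{X}^\circ$: for every field extension $L/k$ the cokernel of $\Psi^\Lambda_{Y^\circ_L}$ is $\Tor^\Lambda(\overline{X^\circ},\emptyset)$-torsion, where $X^\circ$ is the (open) generic fibre of $\mathcal{X}^\circ$. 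So the entire content of the theorem reduces to the identification
$$
\Tor^\Lambda(\overline{X^\circ},\emptyset)=\Tor^\Lambda(\bar X,W_{\bar X}).
$$
But $X^\circ=X\setminus W_X$, so $\overline{X^\circ}=\bar X\setminus W_{\bar X}$, and by the discussion following Definition~\ref{def:decomposition-relative-to-W} (and Definition~\ref{def:torsion-order-relative-to-W}) the $\Lambda$-torsion order of $\bar X$ relative to $W_{\bar X}$ is by definition the order of $\delta_{\bar X}|_{\overline{X^\circ}}$ in $\CH_0(\overline{X^\circ}_{k(\bar X)},\Lambda)$, i.e. exactly $\Tor^\Lambda(\overline{X^\circ},\emptyset)$ (note $k(\bar X)=k(\overline{X^\circ})$ since $W_{\bar X}$ is a proper closed subset and $X$ is geometrically integral, using properness of $\mathcal X$ to guarantee $W_X\neq X$, or ruling out the degenerate case $W_X=X$ where the statement is vacuous). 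This is the key point that makes the ``non-proper extension'' work: replacing the closed subset $W$ by the empty set at the cost of passing to an open subvariety is exactly the trade recorded in Section~\ref{sec:torsion-order}.

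Concretely the steps are: (i) Reduce to the case $W_X\subsetneq X$; if $W_X=X$ then $W_{\bar X}=\bar X$, $\Tor^\Lambda(\bar X,W_{\bar X})=1$ by convention (every $\gamma$ maps to $\delta$ trivially), and there is nothing to prove. Also observe $Y^\circ$ is non-empty: since $\mathcal{X}^\circ$ is strictly semi-stable it is in particular flat and dominant over $R$, hence has non-empty special fibre. (ii) Record that $\overline{X^\circ}=\bar X\times_K\bar K\setminus W_{\bar X}=(\bar X)\setminus W_{\bar X}$, a non-empty open subvariety of the geometrically integral $\bar X$, hence itself integral with the same function field. (iii) By Definition~\ref{def:torsion-order-relative-to-W}, $\Tor^\Lambda(\bar X,W_{\bar X})$ is the order of $\delta_{\bar X}|_{\overline{X^\circ}}$, which is the order of $\delta_{\overline{X^\circ}}$, which is $\Tor^\Lambda(\overline{X^\circ},\emptyset)$. (iv) Apply Theorem~\ref{thm:obstruction-general} to the strictly semi-stable family $\mathcal{X}^\circ\to\Spec R$, whose special fibre $Y^\circ=\bigcup_{i\in I}Y^\circ_i$ has no triple intersections by hypothesis~\eqref{item:thm:obstruction-proper-3intersection}, to conclude that $\coker\Psi^\Lambda_{Y^\circ_L}$ is $\Tor^\Lambda(\overline{X^\circ},\emptyset)$-torsion for every $L/k$. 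Combining (iii) and (iv) gives the theorem.

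The only mild subtlety — and the step I expect to need the most care — is the passage from ``relative Nagata compactification exists'' (the remark preceding the statement) to the clean identification of torsion orders: one must check that $\mathcal{X}$ being proper over $R$ is genuinely compatible with $\mathcal{X}^\circ$ being merely an open subscheme whose generic fibre is $X^\circ=X\setminus W_X$, and that no hypothesis of Theorem~\ref{thm:obstruction-general} (positive characteristic of $\Lambda$, invertibility of the exponential characteristic of $k$) is lost. All of these are inherited directly: $\mathcal{X}^\circ$ is strictly semi-stable by~\eqref{item:thm:obstruction-proper-strictlysemistable}, its generic fibre is $X^\circ$ which is smooth over $K$ (being an open subscheme of the smooth $X=\mathcal X^\circ\times_R K$) and geometrically integral since $\bar X$ is integral and $W_{\bar X}\subsetneq\bar X$, and the ring $\Lambda$ is unchanged. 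Thus there is no real obstacle; the proof is essentially a bookkeeping translation, and I would present it in four or five lines citing Theorem~\ref{thm:obstruction-general}, Definition~\ref{def:torsion-order-relative-to-W}, and the discussion in Section~\ref{sec:torsion-order}.
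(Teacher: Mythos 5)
Your proposal is correct and is essentially identical to the paper's intended argument: the paper treats Theorem \ref{thm:obstruction-proper} as a direct rephrasing of Theorem \ref{thm:obstruction-general} (applied to the strictly semi-stable open piece $\mathcal{X}^\circ$), via the identification $\Tor^\Lambda(\overline{X^\circ},\emptyset)=\Tor^\Lambda(\bar X,W_{\bar X})$ recorded in the discussion around Definitions \ref{def:decomposition-relative-to-W} and \ref{def:torsion-order-relative-to-W}. The remark preceding the theorem statement in the paper is precisely this bookkeeping step that you spelled out.
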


\begin{corollary} \label{cor:Psi}
    Let the assumptions be as in Theorem \ref{thm:obstruction-proper}. 
    If $\bar{X}$ admits a $\Lambda$-decomposition of the diagonal relative to $W_{\bar{X}}$, then the map $$
        \Psi^\Lambda_{Y_L^\circ} \colon \bigoplus\limits_{l \in I} \CH_1(Y_l^\circ \times_k L,\Lambda) \longrightarrow \bigoplus\limits_{\substack{i,j \in I \\ i < j}} \CH_0(Y_{ij}^\circ \times_k L,\Lambda)
    $$
    is surjective for every field extension $L/k$.
\end{corollary}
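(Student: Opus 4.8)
The plan is to deduce the corollary formally from Theorem~\ref{thm:obstruction-proper}, using the reformulation of ``admitting a $\Lambda$-decomposition of the diagonal relative to a closed subset'' in terms of relative torsion orders recorded in Section~\ref{sec:torsion-order}. Concretely, I would first recall the observation made right after Definition~\ref{def:torsion-order-relative-to-W}: for a variety $X$ over a field and a closed subset $W\subset X$, one has $\Tor^\Lambda(X,W)=1$ if and only if $X$ admits a $\Lambda$-decomposition of the diagonal relative to $W$. This is immediate from the definition of $\Tor^\Lambda(X,W)$ as the order of $\delta_X|_U=\delta_U\in\CH_0(U_{k(X)},\Lambda)$ with $U=X\setminus W$, together with the localization sequence, which identifies the vanishing of $\delta_U$ with the condition $\delta_X\in\im(\CH_0(W_{k(X)},\Lambda)\to\CH_0(X_{k(X)},\Lambda))$ of Definition~\ref{def:decomposition-relative-to-W}. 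Applying this to the geometrically integral variety $\bar X$ and the closed subset $W_{\bar X}=W_X\times_K\bar K$, the hypothesis of the corollary is precisely the statement $\Tor^\Lambda(\bar X,W_{\bar X})=1$.

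Next I would invoke Theorem~\ref{thm:obstruction-proper}, whose hypotheses coincide with those of the corollary: it yields, for every field extension $L/k$, that the cokernel of $\Psi^\Lambda_{Y_L^\circ}$ is $\Tor^\Lambda(\bar X,W_{\bar X})$-torsion. Combined with the previous step, this cokernel is $1$-torsion, hence the zero group, so $\Psi^\Lambda_{Y_L^\circ}$ is surjective for every $L/k$, which is the assertion.

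There is essentially no obstacle at the level of this corollary: it is a formal translation of Theorem~\ref{thm:obstruction-proper} to the special case in which the relevant relative torsion order equals $1$. The genuine content lies entirely in Theorem~\ref{thm:obstruction-proper} itself---equivalently, via a relative Nagata compactification together with the monotonicity of relative torsion orders under enlarging $W$ (Lemma~\ref{lem:torsion-order-observations}\eqref{item:torsion-order-decrease-for-larger-sets}) and the specialization statement of Lemma~\ref{lem:Lambda-torsion-order-specialize}, in Theorem~\ref{thm:obstruction-general}---whose proof is where the real work of bounding the cokernel of the obstruction map by the relative torsion order takes place.
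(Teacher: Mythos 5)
Your argument is correct and is exactly the intended one: the hypothesis translates to $\Tor^\Lambda(\bar X,W_{\bar X})=1$ by the discussion following Definition~\ref{def:torsion-order-relative-to-W}, and Theorem~\ref{thm:obstruction-proper} then forces the cokernel of $\Psi^\Lambda_{Y_L^\circ}$ to be $1$-torsion, i.e.\ zero. The paper states the corollary without proof precisely because it is this immediate specialization of the theorem.
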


We illustrate the mechanism of the above corollary in the following example, where we show formally that the fact that elliptic curves admit no decomposition of the diagonal with respect to a finite set of points implies that the same holds for certain quartic surfaces.
The result itself is of course a well-known consequence of \cite{bloch-srinivas} and the emphasis lies in the mechanism of the argument.

\begin{example} \label{ex:quartic-surface}
Let $\Lambda$ be a ring of characteristic $c\geq 2$.
Let $k$ be an algebraically closed field such that the exponential characteristic of $k$ is invertible in $\Lambda$ and let $f,g_0,g_1\in k[x_0,x_1,x_2,x_3]$ be general homogeneous polynomials of degrees $\deg f=4$ and $\deg g_i=2$.
Consider the degeneration $\mathcal X=\{t\cdot f+ g_0g_1=0\}$ over $R=k[[t]]$ of a smooth quartic surface into the union $Y_0\cup Y_1$ of two smooth quadric surfaces $Y_i=\{g_i=0\}$ which meet along a smooth elliptic curve $Z\coloneq Y_{01}=Y_0\cap Y_1$.
Let $S \subset \mathcal X$ be a closed reduced subscheme which is finite and surjective over $\Spec R$.
We further let $T_i\subset Y_i$ be the union of two lines contained in the two different rulings of the quadric surface $Y_i$ and set
$$
W_{\mathcal X}\coloneq S\cup T_0\cup T_1\cup \mathcal X^{\sing} ,
$$
where we note that the singular locus $\mathcal X^{\sing}=\{f=g_0=g_1=t=0\}$ is a finite number of points on $Z$.
In the above notation, $Y_i^\circ=Y_i\setminus W_{\mathcal X}$ is   an open subset of $Y_i\setminus T_i\cong \A^2$, hence  $\CH_1(Y_i^\circ \times_k L,\Lambda) = 0$ for $i=0,1$ and any field extension $L/k$.
By construction, $W_{\mathcal X}$ meets $Z$ in finitely many points.
Since the elliptic curve $Z$ does not admit a $\Lambda$-decomposition of the diagonal with respect to a finite collection of points, 
the group $\CH_0(Z^\circ\times_kL,\Lambda)$ is nontrivial for $L=k(Z)$, where $Z^\circ=Z\setminus W_{\mathcal X}$.
It follows that $\Psi^\Lambda_{Y^\circ_L}$ is not surjective and so Corollary \ref{cor:Psi} implies that the geometric generic fibre $\bar X$ of our family does not admit a $\Lambda$-decomposition of the diagonal with respect to the finite set of points $S\times_R \bar K$. 
\end{example}

The following example extends the argument in the previous example to the nontrivial case of quartic fivefolds, which was the main result in  \cite{PavicSch}.
We leave some details to the reader; similar constructions and arguments will be given with full details in the proof of Theorem \ref{thm:main-irrationality-intro} below.

 \begin{example}\label{ex:quartic5folds}
     Let $k$ be an algebraically closed field of characteristic different from $2$ and let $\Lambda=\Z/2$. 
     Consider the homogeneous polynomial
     $$
         f = x_0^2 y_1^2 + x_1x_2y_2^2 + x_0x_2y_3^2 + x_0x_1 g + x_0^3 z + x_0^2y_1 w\in k[x_0,x_1,x_2,y_1,y_2,y_3,z,w],
     $$ 
     where $g\coloneq x_0^2+x_1^2+x_2^2- 2x_0x_1-2x_0x_2 - 2x_1x_2$. 
     We consider the degeneration $$
         \mathcal{X} \coloneq  \{t x_0^2 + zw = f = 0\} \subset \CP^7_R \to \Spec R
     $$
     over $R = k[[t]]$ of a singular $(2,4)$-complete intersection $X$ in $\CP^7_{k((t))}$ into a union $Y = Y_0 \cup Y_1$ of two singular quartic fivefolds intersecting in the singular integral quartic fourfold
     $$
         Z \coloneq  Y_{01} = \{x_0^2 y_1^2 + x_1x_2y_2^2 + x_0x_2y_3^2 + x_0x_1g=0\} \subset \CP^5_{k},
     $$
     which is birational to the quadric surface bundle example in \cite[Example 8]{HPT}.
 Let $S\subset \mathcal X$ be the closure of a finite set of points in $X$ and let 
 $$
 W_\mathcal{X} \coloneq  S\cup \{x_0 x_1x_2 y_1 = 0\} \cup \SING Z \subset \mathcal{X}.
 $$
 One can check that $\mathcal{X}^\circ \coloneq  \mathcal{X} \setminus W_{\mathcal{X}}$ is strictly semi-stable.  
     The singular quartic fivefold $Y_i$ ($i=0,1$) is birational to $\CP^5$ via the projection from the singular point of multiplicity $3$ given by $x_i=0$ and $y_j = 0$ for all $i,j$ and $z=1$ (resp.\ $w=1$). 
     The restriction of this projection to the open subset $Y_i^\circ = Y_i \setminus W_\mathcal{X}$ gives an isomorphism with an open subset of $\mathbb{A}^5 = \CP^5 \setminus \{x_0 = 0\}$. 
     Hence, $\CH_1(Y_i^\circ \times_k L) = 0$ for every field extension $L/k$.
     Recall the unramified class $\alpha=(x_1/x_0,x_2/x_0)\in H^2_{nr}(k(Z),\Z/2)$ from \cite[Proposition 11]{HPT}. 
     A direct computation (using e.g.\ \cite[Theorem 2.3]{Sch-JAMS}) shows that this class vanishes on the generic points of the divisors $\{x_0 x_1x_2=0\}$ and $\{y_1=0\}$ (the latter uses that $\langle \frac{g}{x_0^2} \rangle$ is a subform of the quadratic form $\langle 1, \frac{x_1}{x_0} \rangle$ over $k(\CP^2)$).
     We then pass to a resolution or alteration of $Z$
 and apply the vanishing result in \cite[Proposition 5.1]{Sch-JAMS} to the unramified class from \cite[Proposition 11]{HPT}, to conclude via an ``action of correspondences'' argument that $Z$ does not admit a $\Lambda$-decomposition of the diagonal relative to $W_Z = W_\mathcal{X} \cap Z = (S \cap Z) \cup \{x_0 x_1 x_2 y_1 = 0\} \cup \SING Z \subset Z$; see Theorem \ref{thm:InductionStart} below for more details on this type of argument.
     Thus, $\CH_0(Z^\circ \times k(Z),\Lambda)$ is nontrivial, where $Z^\circ=Z\setminus W_Z$.
     It follows that the map $\Psi^\Lambda_{Y^\circ_{k(Z)}}$ in Corollary \ref{cor:Psi} is not surjective.
     Consequently, $\bar{X}$ does not admit a $\Lambda$-decomposition of the diagonal relative to $W_{\bar X} = (W_\mathcal{X} \cap X)_{\bar K}$.
     The substitution $y_1=zy_1/x_0$ and $w=-tx_0^2/z $ shows that
     the $(2,4)$ complete intersection $X$ is birational to the singular quartic hypersurface
     $$
         X' = \{z^2 y_1^2 + x_1x_2y_2^2 + x_0x_2y_3^2 + x_0x_1g + x_0^3 z - tx_0^3y_1  = 0\} \subset \CP^6_{k((t))}.
     $$ 
     One checks that the restriction of the induced birational map $X \dashrightarrow X'$ to $X \setminus W_\mathcal{X}$ yields an isomorphism onto its image. 
     It follows that, even after base change to an algebraic closure of $k((t))$, $X'$ does not admit a decomposition of the diagonal relative to the union of $\{x_0x_1x_2 y_1 z = 0\} \subset X'$ with a finite set of points.
     This produces a singular quartic fivefold that does not admit a $\Lambda$-decomposition of the diagonal relative to a finite set of points (union its singular locus).
     We can then apply Lemma \ref{lem:Lambda-torsion-order-specialize} to conclude that any smooth quartic fivefold that degenerates to $X'$ does not admit a decomposition of the diagonal relative to a finite set of points.
     Altogether, this yields a simplified argument for the main result in \cite{PavicSch}. 
 \end{example}

 \begin{remark}
    In order to prove Theorem \ref{thm:main-irrationality-intro}, we would like to replace in the above argument the example \cite[Example 8]{HPT} with the higher dimensional examples in \cite[Section 4 and 7]{Sch-JAMS}. 
    This does not work directly as the unramified class $\alpha$ from \cite[Proposition 5.1]{Sch-JAMS} does not vanish along the divisor $\{y_1 = 0\}$. 
    We circumvent this problem by introducing an additional parameter $\lambda$ in the family, see Section \ref{section:doublecone}.
    Further technical difficulties appear because we aim to apply the above argument inductively, where we degenerate to $Y_0\cup Y_1$ such that $Y_0\cap Y_1$ is birational to the example of lower dimension we had treated before.
 \end{remark}

\subsection{Comparison to the map in \cite{PavicSch}}

We recall the obstruction map from \cite{PavicSch} in the generality needed in this paper.
Let $\Lambda$ be a ring and let $Y = \bigcup\limits_{i \in I} Y_i$ be an snc scheme over a field $k$. Then we consider for $i,j \in I$ the homomorphism
\begin{equation}\label{eq:Phi}
    \Phi^\Lambda_{Y_i,Y_j} \colon \CH_1(Y_i,\Lambda) \longrightarrow \CH_0(Y_j,\Lambda), \quad \gamma_i \mapsto \begin{cases}
        \iota_{ij,j\, \ast} (\gamma_i|_{Y_{ij}}) & \text{if } i \neq j, \\
        - \sum\limits_{l \neq i} \iota_{il,i\, \ast} (\gamma_i|_{Y_{il}}) & \text{if } i = j,
    \end{cases}
\end{equation}
where $\iota_{ij,i} \colon Y_{ij} \coloneq  Y_i \cap Y_j \to Y_i$ is the natural inclusion and $\gamma_i|_{Y_{ij}} \coloneq  \iota_{ij,i}^\ast \gamma$ is the pullback along the regular embedding $\iota_{ij,i}$ of the Cartier divisor $Y_{ij}$ in $Y_i$, 
see \cite[Definition 3.1 and Lemma 3.2]{PavicSch}. 
We additionally set $$
        \Phi^\Lambda_{Y,Y_j} \coloneq  \sum\limits_{i \in I} \Phi^\Lambda_{Y_i,Y_j} \colon \bigoplus\limits_{i \in I} \CH_1(Y_i,\Lambda) \longrightarrow \CH_0(Y_j,\Lambda)
$$
for $j \in I$ and $$
    \Phi^\Lambda_{Y} \coloneq  \sum\limits_{j \in I} \Phi^\Lambda_{Y,Y_j} \colon \bigoplus\limits_{i \in I} \CH_1(Y_i,\Lambda) \longrightarrow \bigoplus\limits_{j \in I} \CH_0(Y_j,\Lambda).
$$

\begin{lemma}\label{lem:alternativePhi}
    Let $R$ be a discrete valuation ring with residue field $k$ and let $\mathcal{X} \to \Spec R$ be a strictly semi-stable $R$-scheme with special fibre $Y \coloneq  \mathcal{X} \times_R k$. 
    Then for $i,j \in I$ and  $\gamma_i \in \CH_1(Y_i,\Lambda)$, we have
    $$ 
        \Phi^\Lambda_{Y_i,Y_j}(\gamma_i) = \iota_j^\ast \iota_{i \, \ast} \gamma_i \in \CH_0(Y_j,\Lambda),
    $$ 
    where $\iota_i \colon Y_i \to \mathcal{X}$ is the natural inclusion.
\end{lemma}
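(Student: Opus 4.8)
The plan is to express both sides of the claimed identity in terms of Fulton's Gysin homomorphisms for the regular embeddings $\iota_i\colon Y_i\hookrightarrow \mathcal X$ and $\iota_j\colon Y_j\hookrightarrow\mathcal X$. These are closed immersions of codimension one, because by definition of a strictly semi-stable $R$-scheme each component of the special fibre is a Cartier divisor on $\mathcal X$; in particular, pushforward along $\iota_i$ and $\iota_j$ requires no properness hypothesis, so we never need $\mathcal X$ (or $Y_i$, $Y_j$) to be proper. Since every map in sight is obtained from its integral analogue by tensoring with $\Lambda$ over $\Z$, it suffices to treat the case $\Lambda=\Z$, the general case following by $\Z$-linearity.

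First I would treat the case $i\neq j$. The scheme-theoretic intersection $Y_{ij}=Y_i\cap Y_j$ is the fibre product $Y_i\times_{\mathcal X}Y_j$, and compatibility of the refined Gysin homomorphism with proper pushforward (see \cite[\S 6]{fulton}) gives
$$
\iota_j^\ast\iota_{i\,\ast}\gamma_i=\iota_{ij,j\,\ast}\bigl(\iota_j^!\gamma_i\bigr)\in\CH_0(Y_j),
$$
where $\iota_j^!\colon\CH_1(Y_i)\to\CH_0(Y_{ij})$ is the refined Gysin homomorphism of the Cartier divisor $Y_j$ along $\iota_i$. It then remains to identify $\iota_j^!\gamma_i$ with $\gamma_i\cdot_{Y_i}Y_{ij}$. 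The refined Gysin homomorphism of a Cartier divisor is intersection with the pulled-back pseudo-divisor $\iota_i^\ast Y_j$ on $Y_i$, and the \'etale-local normal form of a strictly semi-stable scheme---namely $\Spec R[x_1,\dots,x_n]/(x_1\cdots x_r-\pi)$ with $Y_l=\{x_l=0\}$---shows that $\iota_i^\ast Y_j$ is the reduced divisor $Y_{ij}$ (here we use $i\neq j$, so $Y_i\not\subset Y_j$). Since $Y_{ij}$ is a smooth divisor on the smooth scheme $Y_i$, intersection with it is exactly the intersection product $\cdot_{Y_i}$, so $\iota_j^\ast\iota_{i\,\ast}\gamma_i=\iota_{ij,j\,\ast}(\gamma_i\cdot_{Y_i}Y_{ij})=\Phi^\Z_{Y_i,Y_j}(\gamma_i)$, as wanted.

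Next I would treat the diagonal case $i=j$. Here the self-intersection formula (see again \cite[\S 6]{fulton}) yields $\iota_i^\ast\iota_{i\,\ast}\gamma_i=c_1\bigl(N_{Y_i/\mathcal X}\bigr)\cap\gamma_i$, with $N_{Y_i/\mathcal X}=\mathcal O_{\mathcal X}(Y_i)|_{Y_i}$ the normal bundle of the Cartier divisor $Y_i$. The crucial observation is that the whole special fibre $Y=\sum_{l\in I}Y_l$ equals the divisor of a uniformizer $\pi\in R$ pulled back to $\mathcal X$: since $\mathcal X\to\Spec R$ is flat with reduced special fibre, the local ring $\mathcal O_{\mathcal X,\eta_l}$ at the generic point $\eta_l$ of each $Y_l$ is a discrete valuation ring in which $\pi$ is a uniformizer, so all multiplicities equal one. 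Hence $\mathcal O_{\mathcal X}(Y)\cong\mathcal O_{\mathcal X}$, which gives $\mathcal O_{\mathcal X}(Y_i)\cong\mathcal O_{\mathcal X}\bigl(-\sum_{l\neq i}Y_l\bigr)$ and therefore $N_{Y_i/\mathcal X}\cong\mathcal O_{Y_i}\bigl(-\sum_{l\neq i}Y_{il}\bigr)$, where as before $Y_l|_{Y_i}=Y_{il}$ with multiplicity one and $Y_{il}=\emptyset$ is read as the zero divisor. Taking first Chern classes and capping with $\gamma_i$ then gives
$$
\iota_i^\ast\iota_{i\,\ast}\gamma_i=-\sum_{l\neq i}\iota_{il,i\,\ast}\bigl(\gamma_i\cdot_{Y_i}Y_{il}\bigr)=\Phi^\Z_{Y_i,Y_i}(\gamma_i),
$$
finishing the argument.

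The step I expect to be the only genuine obstacle---everything else being bookkeeping with Gysin and refined Gysin homomorphisms---is the identification, via the local structure theory of strictly semi-stable schemes, of $\iota_i^\ast Y_l$ with the \emph{reduced} divisor $Y_{il}$ on $Y_i$, together with the triviality of $\mathcal O_{\mathcal X}(Y)$ and the resulting description of $N_{Y_i/\mathcal X}$; both reduce to the \'etale-local normal form recalled above and to flatness of $\mathcal X$ over $R$.
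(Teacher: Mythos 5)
Your proof is correct and is essentially an unpacking of the paper's one-line citation of \cite[Theorem 6.2]{fulton} (compatibility of refined Gysin maps with proper pushforward, which is exactly what you use in the $i\neq j$ case) together with \cite[Lemma 3.2]{PavicSch}, where the verification you give --- including the self-intersection formula and the principality of $\mathcal O_{\mathcal X}(Y)=\mathcal O_{\mathcal X}(\operatorname{div}\pi)$ for the $i=j$ case --- is carried out in detail. The two small facts you flag at the end (that $\iota_i^\ast Y_l$ is the reduced divisor $Y_{il}$, and that $\mathcal O_{\mathcal X}(Y)$ is trivial) are indeed the only genuinely geometric inputs, and your justifications via the \'etale-local normal form and via flatness plus reducedness of the special fibre are the standard ones; so the argument matches the paper's intended route.
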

\begin{proof}
    This follows from \cite[Theorem 6.2]{fulton}, see also \cite[Lemma 3.2]{PavicSch}.
\end{proof}

The effect of base changes for $\Phi$ has previously been studied in \cite[Section 4]{PavicSch} and \cite[Theorem 1.1]{Lange}; some version of $\Psi$ appeared implicitly in the proof of \cite[Proposition 4.4]{PavicSch}.

Proposition \ref{prop:basechange+resolution-general} below compares $\Phi$ and $\Psi$ and shows that the morphism $\Psi$ introduced in \eqref{eq:Psi} has excellent properties under base changes.
This will play an important role in the proof of Theorem \ref{thm:obstruction-general} below.

Recall that the dual graph $G = (V,E)$ of an snc scheme $Y$ without triple intersections is the graph whose vertices $V$ correspond to the irreducible components of $Y$ and whose edges $E$ encode the codimension $1$ subvarieties of $Y$. Moreover, we explicitly fix a total order `$<$' on the set of vertices $V$.
We denote for a vertex $v \in V$ and an edge $e \in E$ the corresponding irreducible subvariety by $Y_v$ and $Y_e$, respectively.
For an edge $e \in E$, we denote its end points by $v(e),w(e) \in V$ and assume that $v(e) < w(e)$. (Note that every edge in $G$ has two distinct vertices.)

\begin{lemma} \label{lem:basechange+resolution-general}
        Let $\Lambda$ be a ring of finite characteristic $c \in \Z_{\geq 1}$. Let $R$ be a discrete valuation ring with fraction field $K$ and algebraically closed residue field $k$. Let $\mathcal{X} \to \Spec R$ be a strictly semi-stable $R$-scheme whose special fibre $Y \coloneq  \mathcal{X} \times_R k$ has no triple intersections with dual graph $G = (V,E)$. 
    Let $R'/R$ be a finite ramified extension of discrete valuation rings such that the ramification index $r$ is divisible by $c$. Let $\mathcal{X}' \to \mathcal{X} \times_R R'$ be a resolution given by repeatedly blowing up the non-Cartier components of the special fibre as in \cite{hartl}. 
    Then the following holds:
    \begin{enumerate}[(1)]
        \item $\mathcal{X}'$ is strictly semi-stable and its special fibre $Y'$ has no triple intersections. 
        The dual graph $G' = (V',E')$ of $Y'$ consists of the vertices $V' = V \cup (E \times \{1,\dots,r-1\})$ and edges $(v',w') \in V' \times V'$ of the form $$
        \begin{aligned}
            v' &= (e,n), &&w'= (e,n+1) &&\text{for } e \in E,\ n \in \{1,\dots,r-2\}, \\
            v' &= v(e), &&w'= (e,1) &&\text{for } e \in E, \text{ or} \\
            v' &= (e,r-1), &&w'= w(e) &&\text{for } e \in E.
        \end{aligned}
        $$ \label{item:prop:basechange+res-general:dualgraph}
        \item The components $Y'_v$ for $v \in V$ are isomorphic to $Y_v$ and the components $Y'_{(e,n)}$ for $e \in E$ and $1 \leq n < r$ are $\CP^1$-bundles over $Y_{e}$ with two disjoint sections \begin{align*}
            &s_{(e,n)} \colon Y'_{(e,n)} \cap Y'_{(e,n+1)} \longrightarrow Y'_{(e,n)}, \\
            &s'_{(e,n)} \colon Y'_{(e,n-1)} \cap Y'_{(e,n)} \longrightarrow Y'_{(e,n)},
        \end{align*}
        given by the natural inclusion, where we set $Y'_{(e,0)} = Y'_{v(e)}$ and $Y'_{(e,r)} = Y'_{w(e)}$.
        \label{item:prop:basechange+res-general:components}
    \end{enumerate}
\end{lemma}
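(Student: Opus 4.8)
The plan is to reduce both assertions to an explicit computation on the standard local model of a strictly semi-stable scheme, using that everything in sight is compatible with \'etale localisation. The formation of $\mathcal X\times_R R'$, the blow-ups constituting Hartl's resolution, the properties of being strictly semi-stable and of having no triple intersections, and the full description of the special fibre (its components, their smoothness, their pairwise intersections, and the $\CP^1$-bundle structure of the $Y'_{(e,n)}$ with its two sections) can all be checked \'etale-locally on $\mathcal X$. Now, by Definition \ref{def:strictly-semi-stable} together with the hypothesis that $Y$ has no triple intersections, $\mathcal X$ is, \'etale-locally, either (a) smooth over $R$ --- which occurs precisely over the complement of the codimension-two strata $Y_e$, $e\in E$ --- or (b) isomorphic to $\Spec R[x_0,x_1,z_1,\dots,z_m]/(x_0x_1-\pi)$ near such a stratum $Y_e=\{x_0=x_1=0\}$, with local branches $Y_{v(e)}=\{x_0=0\}$ and $Y_{w(e)}=\{x_1=0\}$. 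In case (a), $\mathcal X\times_R R'$ is again smooth over $R'$, Hartl's procedure is trivial there, and the local component of $Y$ extends unchanged; so the content of the lemma is concentrated in case (b).

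In case (b), writing $\pi=u(\pi')^{r}$ with $u\in(R')^{\times}$ and absorbing the unit, the base change becomes \'etale-locally $\Spec R'[x_0,x_1,\mathbf z]/(x_0x_1-(\pi')^{r})$, i.e.\ an $A_{r-1}$-singularity along $Y_e$. I would then carry out Hartl's procedure explicitly --- repeatedly blowing up the ideal of a non-Cartier component of the special fibre --- computing in the two standard affine charts of each blow-up. A direct calculation shows that one such blow-up replaces the $A_{r-1}$-model by the gluing of a smooth chart, on which one reads off the strict transform of the opposite branch (isomorphic to $Y_{w(e)}$ locally) together with one affine chart of a new exceptional divisor, and of a chart carrying an $A_{r-2}$-singularity, on which one reads off an exceptional component mapping isomorphically onto the blown-up branch and the other affine chart of that new exceptional divisor. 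Iterating $r-1$ times one reaches a model that is smooth over $R'$; concretely it is the minimal resolution of $A_{r-1}$ times $\A^{m}_{\mathbf z}$, whose exceptional locus is a chain of $r-1$ copies of $\CP^1\times\A^{m}$. Its special fibre is then the reduced snc divisor forming the chain: a copy of $Y_{v(e)}$, then $r-1$ exceptional $\CP^1$-bundles over $Y_e$, then a copy of $Y_{w(e)}$; in particular there are no triple intersections, and the two sections of each exceptional bundle are exactly its intersections with the two neighbouring chain components, given by the natural inclusions and disjoint precisely because the chain has no triple point.

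Gluing these local models over $\mathcal X$ --- using that away from $\bigcup_e Y_e$ the resolution is an isomorphism --- one obtains that $\mathcal X'$ is regular, flat over $R'$, with smooth generic fibre, and with special fibre $Y'$ an snc scheme whose components are Cartier divisors; that is, $\mathcal X'$ is strictly semi-stable. Moreover $Y'$ has the asserted shape: the dual graph $G'$ has vertices $V'=V\cup(E\times\{1,\dots,r-1\})$ and each old edge $e$ is subdivided into the length-$r$ path $v(e)-(e,1)-\dots-(e,r-1)-w(e)$; the components $Y'_v$ for $v\in V$ are isomorphic to $Y_v$; and the $Y'_{(e,n)}$ are the claimed $\CP^1$-bundles over $Y_e$ with their two disjoint sections $s_{(e,n)},s'_{(e,n)}$. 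That $Y'$ has no triple intersections is visible locally: near each $Y_e$ it is the chain above, and elsewhere it is isomorphic to $Y$.

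The main obstacle is the bookkeeping in the local computation of the second step. One must check that Hartl's procedure --- which a priori involves a choice of which non-Cartier component to blow up at each stage --- terminates and reproduces exactly the $A_{r-1}$ chain, with the correct identification of the exceptional divisors as $\CP^1$-bundles and of their mutual intersections as the two sections; and, more delicately, that the strict transforms of the old components $Y_v$ are genuinely \emph{unchanged} rather than merely birationally modified. Since the centres being blown up are non-Cartier Weil divisors, these blow-ups are not automatically harmless on $Y_v$, and one has to verify on charts that each centre restricts to a Cartier divisor on $Y_v$, so that the induced blow-up of $Y_v$ is an isomorphism. The remaining ingredients --- the \'etale-local reduction and the final patching --- are formal.
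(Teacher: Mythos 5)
Your proposal is correct and takes the same route as the references the paper invokes: the paper's own proof is just a pointer to \cite[Section 4.2]{PavicSch} and \cite[Proposition 4.11]{Lange}, which carry out exactly the \'etale-local reduction to the model $\Spec R[x_0,x_1,\mathbf z]/(x_0x_1-\pi)$ and the $A_{r-1}$ chain-resolution computation you sketch. Your flag about verifying that each blow-up centre restricts to a Cartier divisor on the old components $Y_v$ (so that their strict transforms are unchanged) is precisely the subtlety those references address.
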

\begin{proof}
        The geometry of the resolution after finite base-change is explained for example in \cite[Section 4.2]{PavicSch} for chains.
        The same arguments work for any special fibre that has no triple intersections, see also \cite[Proposition 4.11]{Lange}.
\end{proof}

\begin{remark}
    The graph $(V',E')$ is obtained by subdividing each edge of $(V,E)$ into $r$ pieces, see for example \cite[Example A.1]{Lange}.
\end{remark}

\begin{proposition}\label{prop:basechange+resolution-general}
In the notation of Lemma \ref{lem:basechange+resolution-general},
let $L/k$ be a field extension. If the cokernel of $$
        \Phi^{\Lambda}_{Y'_L} \colon \bigoplus\limits_{v' \in V'} \CH_1(Y'_{v'} \times_k L,\Lambda) \longrightarrow  \bigoplus\limits_{w' \in V'} \CH_0(Y'_{w'} \times_k L,\Lambda)
        $$
        is $m$-torsion for some integer $m$, then the cokernel of the map
        $$
            \Psi^\Lambda_{Y_L} \colon \bigoplus\limits_{l \in V} \CH_1(Y_l \times_k L,\Lambda) \longrightarrow \bigoplus\limits_{\substack{i,j \in V \\ i < j}} \CH_0(Y_{ij} \times_k L,\Lambda)
        $$
        from \eqref{eq:Psi} is $m$-torsion. 
\end{proposition}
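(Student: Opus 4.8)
The plan is to relate the two obstruction maps $\Phi^\Lambda_{Y'_L}$ and $\Psi^\Lambda_{Y_L}$ by exploiting the explicit description of the resolution $\mathcal{X}'$ provided by Lemma \ref{lem:basechange+resolution-general}: the special fibre $Y'$ is obtained from $Y$ by subdividing each edge $e$ of the dual graph $G=(V,E)$ into a chain of $r$ edges, interpolating $Y_{v(e)}$ and $Y_{w(e)}$ by $r-1$ copies of a $\CP^1$-bundle $\pi_e \colon Y'_{(e,n)} \to Y_e$ with two disjoint sections. First I would record, via Lemma \ref{lem:alternativePhi}, that on the level of the strictly semi-stable total space $\mathcal{X}'$ the map $\Phi^\Lambda_{Y',Y'_{w'}}$ is computed by $\iota_{w'}^\ast \iota_{v'\,\ast}$, and note the elementary local structure of the intersections: for a chain-edge, $Y'_{(e,n)} \cap Y'_{(e,n\pm1)}$ is a section of the $\CP^1$-bundle, hence isomorphic to $Y_e$. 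The key computational input is therefore the intersection theory of a $\CP^1$-bundle with two disjoint sections $\sigma, \sigma'$: every class in $\CH_1$ of the bundle that meets a section in a fixed zero-cycle $z$ can, after adding a multiple of a fibre and a vertical correction, be moved so that it meets the other section in the ``same'' zero-cycle, and conversely $\CH_0$ of a section is hit by $\CH_1$-classes of the bundle. This is the mechanism that lets one ``transport'' a relation across a subdivided edge.

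The heart of the argument is then a telescoping/transport construction. Given a target class $(\beta_{ij})_{i<j} \in \bigoplus_{i<j}\CH_0(Y_{ij}\times_k L,\Lambda)$ that one wishes to hit (up to $m$-torsion) by $\Psi^\Lambda_{Y_L}$, I would first lift it to a target for $\Phi^\Lambda_{Y'_L}$: for each original edge $e=(v(e),w(e))$ with $v(e)<w(e)$, put the class $\beta_e := \beta_{v(e)w(e)}$, regarded via $Y_{v(e)w(e)} = Y_e \cong$ (a section of the bundle) $\subset Y'_{(e,1)}$, into the component $\CH_0(Y'_{(e,1)}\times_k L,\Lambda)$ — or more symmetrically spread it out as a ``discrete boundary'' supported at the two ends of the chain — and put $0$ in all other components $\CH_0(Y'_{w'}\times_k L,\Lambda)$, $w'\in V'$. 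One checks this assembled class lies in the image of $\Phi^\Lambda_{Y'_L}$ up to $m$-torsion by hypothesis; say it equals $\Phi^\Lambda_{Y'_L}((\gamma'_{v'})_{v'})$ modulo $m$-torsion. Now I would push the $\CP^1$-bundle components $\gamma'_{(e,n)}$ down the chain: using the two-disjoint-sections structure, replace the chain data by a single ``net flux'' across the edge $e$, i.e.\ produce classes $\gamma_v \in \CH_1(Y_v\times_k L,\Lambda)$ for $v\in V$ (essentially $\gamma_v := \gamma'_v$, since $Y'_v \cong Y_v$) such that the alternating intersection $\gamma_i\cdot_{Y_i}Y_{ij} - \gamma_j\cdot_{Y_j}Y_{ij}$ picks up exactly the accumulated contribution from traversing the chain, which by the telescoping identity collapses to $\beta_{ij}$ modulo $m$-torsion. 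The sign bookkeeping from the orientation $v(e)<w(e)$ and the total order on $V$ must be matched to the signs in the definition \eqref{eq:Psi} of $\Psi$ and in \eqref{eq:Phi} of $\Phi$; this is where one must be careful but it is routine once the bundle computation is in hand.

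The main obstacle I anticipate is precisely the passage from the $\Phi$-side ``vertex-indexed, $\sum$-of-all-adjacent-intersections'' format to the $\Psi$-side ``edge-indexed, difference-of-two-intersections'' format through the subdivided chain: one must show that an arbitrary relation witnessing $\operatorname{coker}\Phi^\Lambda_{Y'_L}$ being $m$-torsion, whose $\CP^1$-bundle components $\gamma'_{(e,n)}$ can a priori be complicated (not just fibre classes or section pullbacks), can nevertheless be normalized, using the relations imposed by the internal chain equations $\Phi^\Lambda_{Y',Y'_{(e,n)}} = 0$-type constraints, so that only the net transport across each edge survives. Concretely, for each fixed edge $e$ the chain $Y'_{v(e)} - Y'_{(e,1)} - \cdots - Y'_{(e,r-1)} - Y'_{w(e)}$ behaves like the degeneration appearing in \cite[Proposition 4.4]{PavicSch}, where such a collapse was carried out implicitly; I would either cite that computation or reprove the one-edge case as a lemma and then assemble the global statement edge-by-edge, noting that the construction is compatible across edges since distinct chains meet only in the original components $Y_v$. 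The fact that $r$ is divisible by $c = \operatorname{char}\Lambda$ is used exactly as in Lemma \ref{lem:basechange+resolution-general} to kill the multiplicities $r$ that appear when intersecting with fibres of the $\CP^1$-bundles, so those error terms vanish in $\CH_\ast(-,\Lambda)$ rather than merely being $m$-torsion.
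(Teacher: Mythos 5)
Your plan matches the paper's proof almost step for step: you lift the target $(\beta_{ij})$ to a $\Phi$-target supported at the $(e,1)$-positions, normalize the $\CP^1$-bundle components so that they become pure pullbacks from $Y_e$ by shifting the section parts down the chain via Lemma~\ref{lem:alternativePhi} (this is the paper's Step 1, following \cite[Lemma 4.3]{PavicSch}), and then collapse the chain to the ``net flux'' $\alpha_{(e,0)}-\alpha_{(e,r)}$ using $r\equiv 0\pmod c$ (the paper's Step 2). The one detail you leave implicit is the precise form of the telescoping identity $\sum_{n=1}^{r-1} n\cdot \Phi^\Lambda_{Y',Y'_{(e,n)}}(\gamma) = \Psi^\Lambda_{Y,Y_e}(q_*\gamma)$, where the weights $n$ (or equivalently, solving the linear recurrence $\alpha_{n+1}-2\alpha_n+\alpha_{n-1}=0$) are exactly where the error term $-r\alpha_{(e,r-1)}$ arises and gets killed by $r\equiv 0\pmod c$; you correctly flag this as the crux and would resolve it as in \cite{PavicSch}, so the approaches coincide.
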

\begin{proof}
    For ease of notation, we will deal with the case $L=k$ in what follows; the general case follows verbatim via the same argument.

    Recall that $q_{(e,n)}:Y'_{(e,n)} \to Y_{e}$ is a $\CP^1$-bundle for $1 \leq n < r$ by item \eqref{item:prop:basechange+res-general:components} in Lemma \ref{lem:basechange+resolution-general}. 
    Thus there exists isomorphisms \begin{align}
        \CH_1(Y'_{(e,n)},\Lambda) &\cong \CH_1(Y_{e},\Lambda) \oplus q_{(e,n)}^\ast \CH_0(Y_{e},\Lambda), \label{eq:projective-bundle-CH1}\\
        \CH_0(Y'_{(e,n)},\Lambda) &\cong \CH_0(Y_{e},\Lambda), \label{eq:projective-bundle-CH0}
    \end{align}
    see \cite[Theorem 3.3 (b)]{fulton}.
    Note that the subspace $q_{(e,n)}^\ast \CH_0(Y_e,\Lambda)$ is canonical, while the subspace $\CH_1(Y_{e},\Lambda)\subset \CH_1(Y'_{(e,n)},\Lambda)$ depends on a choice of a section of $q_{(e,n)}:Y'_{(e,n)} \to Y_{e}$.

    \textbf{Step 1.} We will show that for every $\gamma' \in \bigoplus_{v' \in V'} \CH_1(Y'_{v'},\Lambda)$, there exists another class $\gamma  \in \bigoplus_{v' \in V'} \CH_1(Y'_{v'},\Lambda)$ with the same image $\Phi^\Lambda_{Y'}(\gamma) = \Phi^\Lambda_{Y'}(\gamma')$, such that the component $\gamma_{(e,n)}\in \CH_1(Y'_{(e,n)},\Lambda)$ of $\gamma$ satisfies
    \begin{equation}\label{eq:gamma-assumption}
        \gamma_{(e,n)} \in q^\ast_{(e,n)} \CH_0(Y_e,\Lambda) \subset \CH_1(Y'_{(e,n)},\Lambda)
    \end{equation}
    for each $(e,n) \in E \times \{1,\dots,r-1\}$.

    This follows from the argument in \cite[Lemma 4.3]{PavicSch}, which we explain for the convenience of the reader. Let $\gamma' =(\gamma'_{v'})_{v'} \in \bigoplus_{v' \in V'} \CH_1(Y'_{v'},\Lambda)$ be a collection of one-cycles and let $(e,n) \in E \times \{1,\dots,r-1\}$. Using \eqref{eq:projective-bundle-CH1}, we can write $\gamma'_{(e,n)}$ as
    $$
        \gamma'_{(e,n)} = q_{(e,n)}^\ast \alpha_{(e,n)} + s_{(e,n) \, \ast} \zeta_{(e,n)},
    $$
    for some $\alpha_{(e,n)} \in \CH_0(Y_e,\Lambda)$ and some $\zeta_{(e,n)} \in \CH_1(Y_e,\Lambda)$, where $s_{(e,n)}$ is the section given in item \eqref{item:prop:basechange+res-general:components} of Lemma \ref{lem:basechange+resolution-general}.
    Lemma \ref{lem:alternativePhi} implies that
    \begin{equation*}
        \Phi_{Y'}^\Lambda(s_{(e,n) \, \ast} \zeta_{(e,n)}) = \Phi_{Y'}^\Lambda(s'_{(e,n+1) \, \ast} \zeta_{(e,n)}),
    \end{equation*}
    where $s'_{(e,n+1)} \colon Y'_{(e,n)} \cap Y'_{(e,n+1)} \to Y'_{(e,n+1)}$ is the natural inclusion. Thus $$
        \Phi^\Lambda_{Y'}(\gamma') = \Phi^\Lambda_{Y'}(\gamma''),
    $$
    where $\gamma'' = (\gamma''_{v'})_{v'} \in \bigoplus_{v' \in V'} \CH_1(Y'_{v'},\Lambda)$ is given by
    $$
        \gamma''_{v'} = \begin{cases}
            q_{(e,n)}^\ast \alpha_{(e,n)} & \text{if } v' = (e,n), \\
            \gamma'_{(e,n+1)} + s'_{(e,n+1) \, \ast} \zeta_{(e,n)} & \text{if } v' = (e,n+1),  \\
            \gamma'_v & \text{otherwise,}
        \end{cases}
    $$
    where we set $\gamma'_{(e,r)} = \gamma'_{w(e)} \in \CH_1(Y'_{w(e)},\Lambda)$. Applying this argument for every edge $e \in E$ and $1 \leq n \leq r-1$ (in increasing order) finishes Step 1.

     Since $Y$ is an snc scheme without triple intersections, we have $Y_e \cap Y_{e'} = \emptyset$ for all different $e,e' \in E$. 
     In particular, \begin{equation*}
        \bigoplus\limits_{\substack{i,j \in V \\ \substack i < j}} \CH_0(Y_{ij},\Lambda) = \bigoplus\limits_{e \in E} \CH_0(Y_e,\Lambda).
    \end{equation*}
    Using this identification, there is a natural projection homomorphism $$
        \pr_e \colon \bigoplus\limits_{\substack{i,j \in V \\ \substack i < j}} \CH_0(Y_{ij},\Lambda) \longrightarrow \CH_0(Y_e,\Lambda)
    $$
    for every $e \in E$. We denote the composition $\pr_e \circ \Psi_Y^\Lambda$ by $\Psi^\Lambda_{Y,Y_e}$.

    \textbf{Step 2.} Let $\gamma = (\gamma_{v'})_{v'} \in \bigoplus_{v' \in V'} \CH_1(Y'_{v'},\Lambda)$ be a one-cycle satisfying \eqref{eq:gamma-assumption} and let $q \colon Y' \to Y$ be the natural morphism.  We will show that for every $e \in E$
    \begin{equation}\label{eq:Phi-Psi}
        \sum\limits_{n = 1}^{r-1} n \cdot \Phi^{\Lambda}_{Y',Y'_{(e,n)}}(\gamma) = \Psi^\Lambda_{Y,Y_e}(q_\ast \gamma) \in \CH_0(Y_e,\Lambda),
    \end{equation}
    where we view the zero-cycles on the left hand side as zero-cycles on $Y_e$ using \eqref{eq:projective-bundle-CH0}.

    To prove the above claim, first note that by assumption there exists $\alpha_{(e,n)} \in \CH_0(Y_{e},\Lambda)$ for every $(e,n) \in E \times \{1,\dots,r-1\}$ such that $$
        \gamma_{(e,n)} = q^\ast_{(e,n)} \alpha_{(e,n)} \in \CH_1(Y'_{(e,n)},\Lambda) .
    $$
    To simplify the formulas below, we set additionally 
    \begin{align*}
        \alpha_{(e,0)} &\coloneq  \gamma_{v(e)}|_{Y_{e}} \in \CH_0(Y_{e},\Lambda), \\
        \alpha_{(e,r)} &\coloneq  \gamma_{w(e)}|_{Y_{e}} \in \CH_0(Y_{e},\Lambda),
    \end{align*}
    where $\gamma_{v(e)}|_{Y_{e}}$ is the pullback of $\gamma_{v(e)}$ along the regular embedding $Y_e \hookrightarrow Y_{v(e)}$ of the Cartier divisor $Y_e \subset Y_{v(e)}$.
    Note that we used here the isomorphisms $Y'_{v(e)} \cong Y_{v(e)}$. By \eqref{eq:Phi}, we see that for such a collection of one-cycles $\gamma=(\gamma_{v'})_{v'}$, for $e \in E$ and $1 \leq n < r$, 
    \begin{align*}
        \Phi^{\Lambda}_{Y',Y'_{(e,n)}}(\gamma) &= - 2 \alpha_{(e,n)} + \alpha_{(e,n-1)} + \alpha_{(e,n+1)} \in \CH_0(Y_{e},\Lambda) \cong \CH_0(Y'_{(e,n)},\Lambda).
    \end{align*}
    The following computation then shows the claim in Step 2
    $$
    \begin{aligned}
         \sum\limits_{n = 1}^{r-1} n \cdot \Phi^{\Lambda}_{Y',Y'_{(e,n)}}(\gamma) &= \sum\limits_{n = 1}^{r-1} n \left(- 2\alpha_{(e,n)} + \alpha_{(e,n-1)} + \alpha_{(e,n+1)}\right) \\
        &= \alpha_{(e,0)} - r \alpha_{(e,r-1)} + (r-1) \alpha_{(e,r)} + \sum\limits_{n=1}^{r-2} (-2n + n+1 + n -1) \alpha_{(e,n)} \\
        &= \alpha_{(e,0)} - \alpha_{(e,r)} \\
        &= \gamma_{v(e)}|_{Y_{e}} - \gamma_{w(e)}|_{Y_{e}} \\
        &= \Psi^\Lambda_{Y,Y_e}(q_\ast \gamma),
    \end{aligned}
    $$
    where we used in the third equality that $r$ is divisible by the characteristic $c$ of $\Lambda$.

    We finish the proof of the proposition.
    To this end, let $$
        z = (z_{e})_{e} \in \bigoplus\limits_{e \in E} \CH_0(Y_{e},\Lambda)
    $$
    be a collection of zero-cycles. By assumption, there exists a one-cycle $\gamma \in \bigoplus_{v' \in V'} \CH_1(Y'_{v'},\Lambda)$ such that $\Phi^\Lambda_{Y'}(\gamma) = \beta$, where $\beta$ is the collection of zero-cycles $(\beta_{v'})_{v'}$ in $\bigoplus_{v' \in V'} \CH_0(Y'_{v'},\Lambda)$ with
    $$
        \beta_{v'} = \begin{cases}
            m \cdot z_{e} & \text{for } v' = (e,1), \\
            0 & \text{otherwise.}
        \end{cases}
    $$
    By Step 1, we can assume that $\gamma$ satisfies the condition \eqref{eq:gamma-assumption}. Then Step 2 shows that $m \cdot z = \Psi^\Lambda_Y(q_\ast \gamma)$ is contained in the image of $\Psi^\Lambda_{Y}$, as we want.
\end{proof}

\begin{remark}
    The key point in the above proof is \eqref{eq:Phi-Psi}.
    In the chain of equalities showing \eqref{eq:Phi-Psi}, we used that $\Lambda$ has positive characteristic $c$ and $r$ is divisible by $c$.
We do not know how to perform this step integrally, despite the fact that the (more general) analysis of $\Phi$ under base changes carried out in \cite{Lange} does in fact work integrally.
\end{remark}

\begin{remark}
In \cite{PavicSch}, the obstruction morphism $\Phi^\Z_{Y}$ is studied for strictly semi-stable degenerations $\mathcal X\to \Spec R$ that are proper.
Under this assumption, the image of $\Phi_Y^\Z$  is contained in the kernel of the degree map.
The obstruction to the existence of a decomposition of the diagonal used in \cite{PavicSch} is the cokernel of $\Phi_Y^\Z$, viewed as a map to the subspace of degree-zero classes.
In the above discussion, properness is dropped and so we cannot talk about the degree anymore.
This is the reason why we work directly with the cokernel of $\Phi$ and $\Psi$, respectively. 
An important difference between $\Phi$ and $\Psi$ is the fact that $\Phi$ maps to the Chow groups of zero-cycles of the components, while $\Psi$ maps to $\CH_0$ of the intersections $Y_{ij}$ of two irreducible components of $Y$.
\end{remark}

\subsection{Proof of Theorem \ref{thm:obstruction-general}}

\begin{proof}[Proof of Theorem \ref{thm:obstruction-general}]
    Let $m \coloneq  \Tor^\Lambda(\bar X,\emptyset)$. 
    Then there exists a finite field extension $F/K$ such that $\Tor^\Lambda(X_F,\emptyset) = m$. 

    A suitable localization $R'$ of the integral closure of $R$ in $F$ is also a discrete valuation ring with fraction field $F$ and residue field $k$. (Note that $k$ is algebraically closed.) Up to replacing $R'$ by a ramified extension, we can assume that the ramification index $r$ of $R'/R$ is divisible by the characteristic $c$ of $\Lambda$.
    
    Recall that the special fibre of $\mathcal{X} \to \Spec R$ is an snc scheme $Y$ which has no triple intersections by assumption and we denote its dual graph by $(V,E)$.
    By Lemma \ref{lem:basechange+resolution-general}, there exists a resolution $\tilde {\mathcal{X}} \to \mathcal{X} \times_R R'$ by repeatedly blowing-up the non-Cartier components of the special fibre. 
    The generic fibre of $\tilde {\mathcal{X}} \to \Spec R'$ is isomorphic to $X_F$ and the special fibre $Y'$ has no triple intersections and its dual graph $(V',E')$ is as in Lemma \ref{lem:basechange+resolution-general} \eqref{item:prop:basechange+res-general:dualgraph}. 
    It then suffices by Proposition \ref{prop:basechange+resolution-general}  to show that $\coker \Phi^\Lambda_{Y'_L}$ is $m$-torsion for all field extensions $L/k$.

    Let $L/k$ be a field extension. By inflation of local rings (see e.g. \cite[Chapter IX, Appendice \S 2, Corollaire du Th\'eor\`eme 1 and Exercice 4]{bourbaki}), there exists an unramified extension of discrete valuation rings $A/R'$ such that the induced extension of residue fields is $L/k$.
    Passing to the completion, we may in addition assume that $A$ is complete. 
    We consider the base-change 
    $$
    \tilde {\mathcal{X}}_A\coloneq  \tilde {\mathcal{X}} \times_{R'} A \to \Spec A,
    $$
    which is a strictly semi-stable $A$-scheme, see e.g.\ \cite[Proposition 1.3]{hartl}. We aim to show that the cokernel of the map
    $$
        \Phi^\Lambda_{Y'_L} \colon \bigoplus\limits_{v' \in V'} \CH_1(Y'_{v',L},\Lambda) \longrightarrow \bigoplus\limits_{w' \in V'} \CH_0(Y'_{w',L},\Lambda)
    $$
    defined in \eqref{eq:Phi} is $m$-torsion, where $Y'_{v',L} \coloneq  Y'_{v'} \times_k L$. Let $$
        z = (z_{w'})_{w'} \in \bigoplus\limits_{w' \in V'} \CH_0(Y'_{w',L},\Lambda)
    $$    
    be a collection of zero-cycles. (By a moving lemma, we can assume that no $z_{w'}$ lies in the intersection of $Y'_{w'}$ with another component $Y'_{v'}$.) 
    By Hensel's lemma, see \cite[Th\'eor\`eme 18.5.17]{EGA4.4}, there exists a horizontal one-cycle $h \in Z_1(\tilde {\mathcal{X}}_A,\Lambda)$ such that
    \begin{equation}\label{eq:hrestricted}
        \left.h\right|_{Y'_{w',L}} = z_{w'} \in \CH_0(Y'_{w',L},\Lambda)
    \end{equation}
    holds for all $w'$ already on the level of cycles (without rational equivalence). 
    Recall that $m$ is the $\Lambda$-torsion order of $X_F$ with respect to the empty set.
    Lemma \ref{lem:weak-decomposiiton-of-diagonal} together with Nagata's compactification theorem thus implies that $\CH_0(X_F \times_F F',\Lambda)$ is $m$-torsion for all field extension $F'/F$.
    It follows that the restriction of $m \cdot h$ to the generic fibre $\tilde {\mathcal{X}}_A \times_A F' = X_F \times_F F'$ vanishes, where $F'$ is the fraction field of $A$. Thus the horizontal one-cycle $m \cdot h$ is rationally equivalent to a cycle $\gamma$ supported on the special fibre $Y'_L$ by the localization exact sequence, see \cite[\S 1.8]{fulton}. 
    Hence, we see from \eqref{eq:hrestricted} and Lemma \ref{lem:alternativePhi} that $m \cdot z = \Phi^\Lambda_{Y'_L}(\gamma)$ is contained in the image of $\Phi^\Lambda_{Y'_L}$. 
    This shows that the cokernel of the map $\Phi^\Lambda_{Y'_L}$ is $m$-torsion, which finishes the proof of the theorem.
\end{proof}

\begin{remark}\label{rem:triple-intersection}
    The assumption that the special fibre has no triple intersections in Theorem \ref{thm:obstruction-general} appears to be crucial. 
    In particular, there seems to be no useful generalization of $\Psi$ to snc schemes with deeper strata,
    despite the fact that the definition of $\Phi$ in \eqref{eq:Phi} makes sense in more generality, see \cite[Section 3]{PavicSch}.
    Indeed, if $Z=Y_{i_1}\cap \dots \cap Y_{i_n}$ is a strata of the special fibre $Y$, then we can always perform an $n:1$ base change followed by a resolution as in \cite{hartl} to arrive at a special fibre $\tilde Y$ that contains a component $P_Z$ that is birational to a projective bundle over $Z$.
    However,  for $n\geq 3$, one can show via similar arguments as in \cite{Lange} that the diagonal point of $P_Z$ will automatically be in the image of $\Phi_{\tilde Y_{k(P_Z)}}$.
    (The key difference to $n\leq 2$ is that the blow-up of a component $Y_{i_j}$ along $Z$ contains a positive dimensional projective bundle over $Z$ if $n\geq 3$.) 
    Hence, the strategy for disproving (retract) rationality in \cite{PavicSch,Lange} cannot be applied to strata given by the intersection of more than 2 components. 
%
\end{remark}

\section{Double cone construction}\label{section:doublecone}

In this section we consider an explicit degeneration of a variety birational to a degree $d$ hypersurface $X'$ into a union of two rational varieties whose intersection $Z$ is a degree $d$ hypersurface of lower dimension. 
We aim to apply Theorem \ref{thm:obstruction-proper} to this particular family and show that the $\Lambda$-torsion order of $Z$ divides the $\Lambda$-torsion order of $X'$. In particular, we can inductively increase the dimension of retract irrational hypersurfaces.

Let $k = \overline{k_0(\lambda)}$ be an algebraic closure of the purely transcendental field extension $k_0(\lambda)$ of an algebraically closed field $k_0$. Write $N = n + r+ s$ for some integers $n,r,s \in \Z_{\geq 0}$ and consider integers $d \geq 4$ and $l \geq 1$ such that $2l \leq d$. We denote the homogeneous coordinates of $\CP_k^{N+3}$ by $x_0,\dots,x_n,y_1,\dots,y_{r+1},z_1,\dots,z_s,z,w$. Let \begin{equation}\label{eq:polynomials}
    f, a_0, a_1, \dots, a_l \in k_0[x_0,\dots,x_n,y_1,\dots,\hat{y_j},\dots,y_{r+1},z_1,\dots,z_s]
\end{equation}
be homogeneous polynomials of degree $\deg f = d$ and $\deg a_i = d-2i$ which do  not contain the variable $y_j$ for some $1 \leq j \leq r+1$.
Assume that \begin{equation}\label{eq:conditionirreducible}
    f + a_0 \in k[x_0,\dots,x_n,y_1,\dots,\hat{y_j},\dots,y_{r+1},z_1,\dots,z_s] \text{ is irreducible.}
\end{equation} 
Let $R \coloneq  k[t]_{(t)}$ be the local ring of $\A^1$ at the origin and consider the complete intersection $R$-scheme
\begin{equation}\label{eq:mathcalX}
    \mathcal{X} \coloneq   \left\{f + \sum\limits_{i=0}^l a_ix_0^iy_j^i + x_0^{d-1}z + x_0^{d-2}(\lambda y_j + x_0)w = tx_0^2 + zw = 0\right\} \subset \CP_R^{N+3}.
\end{equation} 

\begin{lemma}\label{lem:singmathcalX}
    The singular locus of $\mathcal X$ in \eqref{eq:mathcalX} is contained in $\{x_0 = 0\} \subset \mathcal X$.
\end{lemma}

\begin{proof}
    Consider the part of the Jacobian given by the derivatives $\partial_t$ and $\partial_z$
    $$
        \begin{pmatrix}
            0 & x_0^{d-1} \\
            x_0^2 & w
        \end{pmatrix}.
    $$
    As the singular locus is given by the vanishing of all $2 \times 2$ minors of the Jacobian, we see that it is contained in $\{x_0 = 0\}$ as claimed.
\end{proof}

The generic fibre of the family $\mathcal X \to \Spec R$ in \eqref{eq:mathcalX} is birational to a degree $d$ hypersurface. 
For the inductive argument to work, it will be important to understand the corresponding birational map, which is the content of the following lemma, where we denote by $K\coloneq k(t)$ the fraction field of $R=k[t]_{(t)}$.

\begin{lemma}\label{lem:genericfibre}
The generic fibre $X = \mathcal X \times_R K$ of the family \eqref{eq:mathcalX} is birational to a geometrically integral degree $d$ hypersurface $X'$ of the form
    \begin{equation}\label{eq:X'}
        X' = \left\{f + \sum\limits_{i=0}^l a_i' y_j^i = 0\right\} \subset \CP^{N+2}_K,
    \end{equation}
    where $f$ is as in \eqref{eq:polynomials} and $a'_0,\dots,a'_l \in K[x_0,\dots,x_n,y_1,\dots,\hat{y_j},\dots,y_{r+1},z_1,\dots,z_{s+1}]$ are homogeneous polynomials of degree $\deg a'_i = d - i$. (An explicit formula for them is given in \eqref{eq:aiprime} below.) Moreover, they satisfy the following properties:
    \begin{enumerate}[(1)]
        \item The birational map induces an isomorphism between the open subsets $\{x_0 \neq 0\} \subset X$ and $\{x_0z_{s+1} \neq 0\} \subset X'$; \label{item:lem:genericfibre1}
        \item If there exists $e \in \N$ such that $x_0^{ie} \mid a_i$ for all $i = 0,\dots,l$, then $x_0^{ie} \mid a'_i$ for all $i = 0,\dots,l$; \label{item:lem:genericfibre2}
        \item If all $a_0,\dots,a_l$ do not contain one of the coordinates $x_1,\dots,x_n$, $y_1,\dots,y_{r+1},$ or $z_1,\dots,z_{s}$, then so do all $a'_0,\dots,a'_l$; \label{item:lem:genericfibre3}
        \item The polynomial $f + a_0'$ is irreducible over $\Bar{K}$. \label{item:lem:genericfibre4} 
    \end{enumerate}
\end{lemma}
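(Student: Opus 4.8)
The plan is to exhibit the birational map $X\dashrightarrow X'$ explicitly over the open locus $\{x_0\neq 0\}$, where the geometry of $\mathcal X$ is controlled by Lemma~\ref{lem:singmathfrakX}. There the quadric $tx_0^2+zw=0$ reads $zw=-tx_0^2$, so on $\{x_0\neq0\}\subset X$ neither $z$ nor $w$ vanishes (recall $t\neq0$ in $K$), and the first equation in \eqref{eq:mathfrakX} is linear in $z$ with the invertible leading coefficient $x_0^{d-1}$ and linear in $w$. Using it to eliminate $z$ and substituting into the quadric realizes $\mathcal X\cap\{x_0\neq0\}$ as an explicit hypersurface; a naive such elimination gives an equation of degree $d+1$, but an explicit further birational transformation, introducing a new homogeneous coordinate $z_{s+1}$ adapted to the conic $\{tx_0^2+zw=0\}$ and simplifying, brings the degree down to $d$ — this is the point at which the precise exponents in \eqref{eq:mathfrakX} are tuned. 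Collecting the monomials of the resulting degree $d$ equation according to the powers of $y_j$ yields homogeneous polynomials $a_0',\dots,a_l'$ of degrees $\deg a_i'=d-i$ not involving $y_j$, whose closed form I would record as \eqref{eq:aiprime}. Since the construction is by design an isomorphism between $\{x_0\neq0\}\subset X$ and $\{x_0z_{s+1}\neq0\}\subset X'$, this proves item~\eqref{item:lem:genericfibre1} and, in particular, that $X$ and $X'$ are birational.

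Items~\eqref{item:lem:genericfibre2} and~\eqref{item:lem:genericfibre3} then follow by inspecting \eqref{eq:aiprime}: the $a_i'$ arise from the $a_i$ only by multiplication with monomials in $x_0$ and by adding correction terms built out of $x_0$, $\lambda$, $t$ and $z_{s+1}$ alone, so a divisibility $x_0^{ie}\mid a_i$ is inherited by $a_i'$, and a coordinate among $x_1,\dots,x_n,y_1,\dots,y_{r+1},z_1,\dots,z_s$ avoided by all the $a_i$ is avoided by all the $a_i'$.

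The heart of the matter is item~\eqref{item:lem:genericfibre4}. By assumption~\eqref{eq:conditionirreducible}, $f+a_0$ is irreducible over $k$; since $k$ is algebraically closed, $\{f+a_0=0\}$ is geometrically integral, so $f+a_0$ remains irreducible of degree $d$ over $\bar K$. From the explicit shape of $a_0'$ one checks that setting $z_{s+1}$ to a suitable value turns $f+a_0'$ into a nonzero scalar multiple of $f+a_0$. Now if $f+a_0'=PQ$ were a factorization over $\bar K$ into homogeneous forms with $\deg P,\deg Q\geq1$, this specialization would write $f+a_0$, up to a nonzero scalar, as a product $\bar P\bar Q$ of the specialized forms; irreducibility of $f+a_0$ forces one of them, say $\bar Q$, to be a nonzero constant, so that $\deg\bar P=d$, whereas $\deg\bar P\leq\deg P=d-\deg Q\leq d-1$, a contradiction. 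Hence $f+a_0'$ is irreducible over $\bar K$. Finally, $X'$ is geometrically integral: a factorization $f+\sum_{i=0}^l a_i'y_j^i=PQ$ over $\bar K$ into homogeneous forms with $\deg P,\deg Q\geq1$ would, upon setting $y_j=0$, yield $f+a_0'=(P|_{y_j=0})(Q|_{y_j=0})$ with both factors homogeneous of the positive degrees $\deg P$ and $\deg Q$, contradicting the irreducibility just proved.

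The step I expect to be the main obstacle is producing \eqref{eq:aiprime}: finding the birational transformation that lowers the degree from $d+1$ to $d$, and writing the resulting $a_i'$ in a form transparent enough to exhibit the specialization used for item~\eqref{item:lem:genericfibre4}; the remaining assertions are then routine. One should also keep in mind that $X$ is singular and only $\{x_0\neq0\}$ is controlled (Lemma~\ref{lem:singmathfrakX}, Corollary~\ref{cor:singgenericfibre}), so all the birational identifications above must be carried out over $\{x_0\neq0\}$.
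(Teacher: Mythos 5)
Your overall strategy — work on $\{x_0\neq 0\}$, exhibit the birational map, read off items (2) and (3) from the explicit formula for $a_i'$, and deduce geometric integrality of $X'$ from irreducibility of $f+a_0'$ by setting $y_j=0$ — matches the paper's. However, there are two substantive issues.

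First, the proof leaves the central construction as a black box. You eliminate $z$ from the degree-$d$ equation (the paper instead solves the quadric for $w=tx_0^2z^{-1}$ and substitutes into the first equation), observe the result has degree $d+1$, and then assert that an unspecified "further birational transformation ... brings the degree down to $d$.'' But this transformation is precisely the content of the lemma; without it, neither the explicit form of $a_i'$ nor any of the four listed properties can actually be checked. The paper's transformation is $y_j\mapsto y_jz/x_0-\lambda^{-1}x_0$, chosen so that $\lambda y_j+x_0$ becomes $\lambda y_jz/x_0$, whose factor of $z$ cancels the $z^{-1}$. Writing the resulting polynomial in powers of $y_j$ then gives
\[
a_i' = z_{s+1}^i\Bigl(\sum_{m=i}^l\binom{m}{i}(-\lambda^{-1})^{m-i}x_0^{2m-2i}a_m\Bigr)+\delta_{i,1}t\lambda x_0^{d-1}+\delta_{i,0}x_0^{d-1}z_{s+1}.
\]

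Second, your argument for item (4) does not go through. You claim that a suitable specialization of $z_{s+1}$ turns $f+a_0'$ into a nonzero scalar multiple of $f+a_0$. But from the formula above,
\[
a_0' = a_0 + \sum_{m=1}^l(-\lambda^{-1})^m x_0^{2m}a_m + x_0^{d-1}z_{s+1},
\]
and to cancel the middle sum one would have to substitute $z_{s+1}=-x_0^{1-d}\sum_{m\geq 1}(-\lambda^{-1})^m x_0^{2m}a_m$, which is not a linear form in the original coordinates (the exponents $2m-d+1$ are negative for small $m$); no admissible specialization of $z_{s+1}$ alone recovers $f+a_0$ up to scalar. The paper's argument is of a different nature and does work: $f+a_0'$ is linear in $z_{s+1}$, equal to $x_0^{d-1}z_{s+1}+(f+b_0)$ where $b_0$ is the $z_{s+1}$-free part of $a_0'$. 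Since $b_0\equiv a_0\ (\mathrm{mod}\ x_0)$ and $f+a_0$ is irreducible of degree $d\geq 4$, $x_0$ does not divide $f+b_0$, so the coefficient $x_0^{d-1}$ and the constant term $f+b_0$ have no common factor, and a polynomial linear in a variable with coprime coefficient and constant term is irreducible. Your reduction of geometric integrality of $X'$ to item (4) by setting $y_j=0$ is fine, but item (4) itself needs the correct argument.
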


\begin{proof}
The parameter $t$ is nonzero on the generic fibre $X$
of \eqref{eq:mathcalX}.
We then work on the open subset $\{x_0z\neq 0\}$  and perform the substitution $w=-tx_0^2z^{-1}$ to obtain the equation
\begin{align} \label{eq:X-double-cone}
        f + \sum\limits_{i=0}^l a_i x_0^i y_j^i + x_0^{d-1}z - x_0^{d-2}(\lambda y_j + x_0)tx_0^2 z^{-1} = 0.
\end{align}
    After the change of coordinates $y_j \mapsto y_jz / x_0 - \lambda^{-1} x_0$ we arrive at the degree $d$ hypersurface given by the vanishing of the polynomial
    $$
        f + \sum\limits_{i = 0}^l a_i (y_j z - \lambda^{-1}x_0^2)^i + x_0^{d-1}z - t\lambda x_0^{d-1}y_j = 0.
    $$
    Reordering the terms and renaming the coordinate $z$ to $z_{s+1}$, yields the claim that $X$ is birational to a degree $d$ hypersurface $X'$ of the form \eqref{eq:X'}, where
    \begin{equation}\label{eq:aiprime}
        a'_i = z_{s+1}^i \left(\sum\limits_{m = i}^l \binom{m}{i} (-\lambda^{-1})^{m-i} x_0^{2 m - 2 i} a_{m}\right) - \delta_{i,1} t \lambda x_0^{d-1} + \delta_{i,0} x_0^{d-1}z_{s+1} .
    \end{equation}
    This shows \eqref{item:lem:genericfibre2} and \eqref{item:lem:genericfibre3}. 
    Item \eqref{item:lem:genericfibre1} follows immediately from the above construction. 
    Indeed, the coordinate transformation $y_j \mapsto y_jz/x_0 - \lambda^{-1}x_0$ is invertible on the set $\{x_0z \neq 0\}$ with inverse given by $y_j \mapsto x_0(y_j + \lambda^{-1}x_0)/z$. Thus the birational map induces an isomorphism between the open subsets $\{x_0 \neq 0\} = \{x_0z \neq 0\} \subset X$ and $\{x_0 z_{s+1} \neq 0\} \subset X'$. (Note that we renamed the $z$-coordinate to $z_{s+1}$.) 
    Next we prove \eqref{item:lem:genericfibre4}. 
    Since $a_0'$ contains the variable $z_{s+1}$ linearly by \eqref{eq:aiprime} and $f$ does not contain $z_{s+1}$, the condition \eqref{eq:conditionirreducible} implies that $f + a_0'$ is irreducible, as claimed. 
    The hypersurface $X'$ is geometrically integral by \eqref{item:lem:genericfibre4}.
\end{proof}

\begin{remark}
The name ``double cone construction'' is taken from \cite{Moe}; it is reflected by the fact that the generic fibre of our degeneration can birationally be described by the equation \eqref{eq:X-double-cone} above, which contains the variable $z$ and its inverse linearly and so its Newton polytope is a double cone.
We note however that the degenerations that we use in this paper in general do not have toric singularities (e.g.\ because the singular hypersurfaces in \cite{Sch-JAMS} do not have toric singularities) and hence is different from the degenerations suitable for the method of \cite{NO,Moe}.
\end{remark}

\begin{corollary}\label{cor:geomintegral}
    The generic fibre $X$ of \eqref{eq:mathcalX} is geometrically integral.
\end{corollary}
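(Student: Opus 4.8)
The plan is to deduce this from Lemma~\ref{lem:genericfibre}. That lemma provides a geometrically integral degree-$d$ hypersurface $X'\subset \CP^{N+2}_K$ together with a birational map $X\dashrightarrow X'$ which, by item~\eqref{item:lem:genericfibre1}, restricts to an isomorphism between the open subscheme $U:=\{x_0\neq 0\}\subset X$ and the open subscheme $U':=\{x_0 z_{s+1}\neq 0\}\subset X'$. Since $X'$ is integral of positive dimension, neither $x_0$ nor $z_{s+1}$ vanishes identically on $X'$, so $U'$ is a nonempty, hence dense, open subscheme of the integral scheme $X'$; in particular $U'$, and therefore $U$, is integral. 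Because $X'$ is even \emph{geometrically} integral, the same holds for the base changes $U'_{\bar K}$ and $U_{\bar K}$.

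It remains to upgrade this to $X$ and $X_{\bar K}$ themselves. The scheme $X\subset \CP^{N+3}_K$ is cut out by the two equations of \eqref{eq:mathfrakX} (with $t$ invertible), hence is a complete intersection of degrees $(d,2)$: it is Cohen--Macaulay, equidimensional of dimension $N+1$, and has no embedded components (the same being true after base change to $\bar K$). I claim that $U$ is dense in $X$, i.e.\ that no irreducible component of $X$ lies in the hyperplane $\{x_0=0\}$; equivalently, $\{x_0=0\}\cap X$ has dimension $\leq N$. This is the one place where the explicit shape of \eqref{eq:mathfrakX} is used: on $\{x_0=0\}$ the two defining equations become $f|_{x_0=0}$ and $zw$, and since $f$ involves none of the coordinates $z,w$ (see \eqref{eq:polynomials}) one checks that $\{x_0=0\}\cap X$ contains no $\CP^{N+1}$ and hence has dimension $\leq N$ (this also keeps track of the fact that $x_0\nmid f$, as guaranteed by the construction of $f$). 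Granting this, $U$ is dense in $X$, so $X$ is irreducible with generic point that of $U$; being Cohen--Macaulay (so without embedded primes) and generically reduced (as $U$ is reduced), $X$ is reduced, hence integral. Running the identical argument over $\bar K$ — where $X_{\bar K}$ is again a complete intersection of dimension $N+1$, and $U_{\bar K}\cong U'_{\bar K}$ is integral and dense in it since $(\{x_0=0\}\cap X)_{\bar K}$ still has dimension $\leq N$ — shows that $X_{\bar K}$ is integral, i.e.\ $X$ is geometrically integral.

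The only part that is not purely formal is this density statement, i.e.\ the absence of spurious components of the complete intersection $X$ inside $\{x_0=0\}$; once it is in place, the rest is the standard fact that integrality of a dense open subscheme of a Cohen--Macaulay scheme propagates to the whole scheme, applied over $K$ and over $\bar K$. Alternatively, one may feed in Corollary~\ref{cor:singgenericfibre} (so that $X$ is smooth away from $\{x_0=0\}$) together with connectedness of a positive-dimensional complete intersection in projective space to rule out a second component meeting $U$; but ruling out a component contained in $\{x_0=0\}$ still comes down to the same dimension estimate, so I expect that estimate to be the main point to address carefully.
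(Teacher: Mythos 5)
Your proposal follows essentially the same route as the paper: invoke Lemma~\ref{lem:genericfibre} to get that $\{x_0\neq 0\}\subset X$ is an open piece of the geometrically integral $X'$, use that $X$ is a complete intersection (hence Cohen--Macaulay, equidimensional, no embedded primes), and then rule out a component of $X$ sitting inside $\{x_0=0\}$ via a dimension count. The paper's write-up is terser, simply noting that $f+a_0$ is irreducible by \eqref{eq:conditionirreducible}, which immediately gives the needed dimension drop.

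One small slip in your write-up: on $\{x_0=0\}$ the first defining equation of \eqref{eq:mathfrakX} becomes $(f+a_0)|_{x_0=0}$, not $f|_{x_0=0}$ (the $i=0$ term of $\sum a_i x_0^i y_j^i$ is $a_0$, which survives). The relevant nonvanishing is $x_0\nmid(f+a_0)$, guaranteed by irreducibility of $f+a_0$ in \eqref{eq:conditionirreducible}, not by a property of $f$ alone. Also, the phrase ``contains no $\CP^{N+1}$ and hence has dimension $\leq N$'' is a non sequitur as stated; the clean version is that $(f+a_0)|_{x_0=0}$ and $zw$ are nonzero forms in disjoint sets of variables, hence a regular sequence in $\CP^{N+2}$, so $\{x_0=0\}\cap X$ has dimension exactly $N<\dim X$. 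With those corrections the argument is the same as the paper's.
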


\begin{proof}
    Since $X$ is a complete intersection in $\CP^{N+3}_K$, it is equidimensional and Cohen-Macaulay. 
    As $X$ is Cohen-Macaulay, $X$ has no embedded components. 
    By Lemma \ref{lem:genericfibre}, we know that the open subset $\{x_0 \neq 0\} \subset X$ is isomorphic to an open subset of the geometrically integral hypersurface $X' \subset \CP^{N+2}_K$ in \eqref{eq:X'}. Hence it suffices to show that the subset $\{x_0 = 0\} \subset X$ is not an irreducible component of $X$, which is clear because $f + a_0$ is an irreducible polynomial by \eqref{eq:conditionirreducible}.
\end{proof}

We turn to the special fibre of the family \eqref{eq:mathcalX}. The special fibre $Y = \mathcal X \times_R k$ of the family \eqref{eq:mathcalX} has two components, namely
\begin{align}\label{eq:Y0}
    Y_0 &\coloneq  \left\{f + \sum\limits_{i=0}^l a_i x_0^i y_j^i + x_0^{d-1} z = 0\right\} \subset \CP_k^{N+2} ,\\ 
\label{eq:Y1}
    Y_1 &\coloneq  \left\{f + \sum\limits_{i=0}^l a_i x_0^i y_j^i + x_0^{d-2}(\lambda y_j + x_0) w = 0 \right\} \subset \CP_k^{N+2}.
\end{align}
The intersection $Z \coloneq  Y_0 \cap Y_1$ is the degree $d$ hypersurface
\begin{equation}\label{eq:Z}
    Z \coloneq  \left\{f + \sum\limits_{i=0}^l a_i x_0^i y_j^i = 0\right\} \subset \CP_k^{N+1},
\end{equation}
The assumption (see \eqref{eq:conditionirreducible}) that $f + a_0$ is irreducible implies that $Z$ is integral.

\begin{lemma}\label{lem:singspecialfibre}
    The singular locus of $Y_0$ is contained in $\{x_0 = 0\}$ and the singular locus of $Y_1$ is contained in the closed subset $\{x_0(\lambda y_j + x_0) = 0\}$. 
    Moreover, $$
        Y_1^{\sing} \cap Z \subset Z^{\sing}.
    $$
\end{lemma}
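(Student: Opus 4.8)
The plan is to read off the first two statements directly from the Jacobian criterion, differentiating only with respect to the ``last'' coordinates $z$ and $w$, and to deduce the third from the observation that, inside $Y_1$, the hypersurface $Z$ is simply the hyperplane section $\{w=0\}$.

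First I would fix notation: let $F_0$ and $F_1$ be the defining polynomials of $Y_0$ and $Y_1$ in \eqref{eq:Y0} and \eqref{eq:Y1}, and let $G=f+\sum_{i=0}^l a_ix_0^iy_j^i$ be the defining polynomial of $Z$ in \eqref{eq:Z}. Since $z$ occurs in $F_0$ only through the monomial $x_0^{d-1}z$, one has $\partial F_0/\partial z=x_0^{d-1}$, whose zero locus is $\{x_0=0\}$ because $d\geq 4$; hence $Y_0^{\sing}\subset\{x_0=0\}$. Likewise $w$ occurs in $F_1$ only through $x_0^{d-2}(\lambda y_j+x_0)w$, so $\partial F_1/\partial w=x_0^{d-2}(\lambda y_j+x_0)$, whose zero locus (again using $d\geq 4$) is $\{x_0(\lambda y_j+x_0)=0\}$; this is the second assertion. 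Note that both steps differentiate a variable occurring linearly, so no integer is inverted and the characteristic plays no role here.

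For the last assertion I would exploit that, in the homogeneous coordinates $x_0,\dots,x_n,y_1,\dots,y_{r+1},z_1,\dots,z_s,w$ of the ambient $\CP^{N+2}_k$ of $Y_1$, the subvariety $Z\subset\CP^{N+1}_k$ is exactly $Y_1\cap\{w=0\}$, the coordinates $x_0,\dots,x_n,y_1,\dots,y_{r+1},z_1,\dots,z_s$ being those of $\CP^{N+1}_k$, and $F_1=G+x_0^{d-2}(\lambda y_j+x_0)\,w$ with $G$ not involving $w$. Now take $p\in Y_1^{\sing}\cap Z$. Since $p\in Z$ we have $w(p)=0$ and $G(p)=0$. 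For every coordinate $u\neq w$ the product rule gives $\partial F_1/\partial u=\partial G/\partial u+w\cdot\partial\big(x_0^{d-2}(\lambda y_j+x_0)\big)/\partial u$, which at $p$ equals $\partial G/\partial u(p)$ because $w(p)=0$. As $p\in Y_1^{\sing}$, all partials of $F_1$ vanish at $p$, so $\partial G/\partial u(p)=0$ for every homogeneous coordinate $u$ of $\CP^{N+1}_k$; combined with $G(p)=0$ and the fact that $Z$ is an integral hypersurface of dimension $N$ (as noted after \eqref{eq:Z}, using \eqref{eq:conditionirreducible}), the Jacobian criterion — i.e.\ a Zariski tangent space dimension count, valid in any characteristic — yields $p\in Z^{\sing}$.

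The only genuinely error-prone point, the ``main obstacle'' such as it is, is the coordinate bookkeeping that identifies $Z$ with $Y_1\cap\{w=0\}$ and thereby matches the partial derivatives of $G$ with those of $F_1$ along $\{w=0\}$; once this is set up the rest is immediate, and in particular the running hypothesis on $\operatorname{char}k$ is not needed for this lemma.
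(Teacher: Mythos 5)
Your proposal is correct and follows essentially the same route as the paper: the first two inclusions are read off from $\partial_z F_0=x_0^{d-1}$ and $\partial_w F_1=x_0^{d-2}(\lambda y_j+x_0)$ exactly as in the text, and your explicit Jacobian computation for the third claim is just the unwound version of the paper's one-line observation that $Z=Y_1\cap\{w=0\}$ is a Cartier divisor on $Y_1$.
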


\begin{proof}
    The derivative of the defining equation of $Y_0$ with respect to $z$ is given by
    $$
        \partial_z \left(f + \sum\limits_{i = 0}^l a_ix_0^iy_j^i + x_0^{d-1}z\right) = x_0^{d-1}.
    $$
    Hence, the singular locus of $Y_0$ is contained in $\{x_0 = 0\}$.

    The derivative of the defining equation of $Y_1$ with respect to $w$ is given by
    $$
        \partial_w \left(f + \sum\limits_{i = 0}^l a_ix_0^iy_j^i + x_0^{d-2}(\lambda y_j + x_0)w\right) = x_0^{d-2} (\lambda y_j + x_0).
    $$
    Hence, the singular locus of $Y_1$ is contained in $\{x_0(\lambda y_j + x_0)=0\}$ as claimed. 
    Finally, the last claim in the lemma is clear, as $Z \subset Y_1$ is a Cartier divisor in $Y_1$ (given by $\{w = 0\} \subset Y_1$).
\end{proof}

    In order to understand the obstruction map \eqref{eq:Psi} for the family \eqref{eq:mathcalX}, we need to control the Chow group of the two components $Y_0$ and $Y_1$.
    
    \begin{lemma}\label{lem:CH1}
        Let $Y_0$ and $Y_1$ be as in \eqref{eq:Y0} and \eqref{eq:Y1}, i.e.\ the irreducible components of the special fibre of the family \eqref{eq:mathcalX}.
        Consider the divisors $D_0 \coloneq  \{x_0 = 0\} \subset Y_0$ and $D_1 \coloneq  \{x_0(\lambda y_j + x_0) = 0\} \subset Y_1$. 
        Then the natural push-forward maps
        $$
            \CH_1(D_0 \times_k L) \longrightarrow \CH_1(Y_0 \times_k L), \quad \CH_1(D_1 \times_k L) \longrightarrow \CH_1(Y_1 \times_k L)
        $$
        are surjective for every field extension $L/k$.
    \end{lemma}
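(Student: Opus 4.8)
The plan is to reduce the assertion, via the localization exact sequence for Chow groups, to the vanishing of $\CH_1$ of the open complements $Y_0\setminus D_0$ and $Y_1\setminus D_1$ after arbitrary base change; this in turn follows once we identify these complements with an affine space, resp.\ with the complement of a hyperplane in an affine space.

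First I would examine $Y_0$. Since the polynomials $f,a_0,\dots,a_l$ from \eqref{eq:polynomials} do not involve the coordinate $z$, the defining equation of $Y_0$ in $\CP^{N+2}_k$ is affine-linear in $z$ with leading coefficient $x_0^{d-1}$. Hence the linear projection $\CP^{N+2}_k\dashrightarrow \CP^{N+1}_k$ forgetting $z$ restricts to an isomorphism $Y_0\setminus D_0 = Y_0\cap\{x_0\neq 0\}\cong \{x_0\neq 0\}\cong \A^{N+1}_k$, whose inverse attaches to a point its unique $z$-coordinate $-x_0^{-(d-1)}\bigl(f+\sum_i a_ix_0^iy_j^i\bigr)$. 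The same argument applies to $Y_1$, whose defining equation is affine-linear in $w$ with leading coefficient $x_0^{d-2}(\lambda y_j+x_0)$: projecting away $w$ identifies $Y_1\setminus D_1 = Y_1\cap\{x_0(\lambda y_j+x_0)\neq 0\}$ with the open subset $\{x_0(\lambda y_j+x_0)\neq 0\}$ of $\CP^{N+1}_k$. Since $\lambda\neq 0$, in the affine chart $\{x_0\neq 0\}\cong \A^{N+1}_k$ this open subset is the complement of an affine hyperplane, hence isomorphic to $\A^{N}_k\times\mathbb{G}_m$.

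Next I would observe that all of the above commutes with base change along any extension $\kappa/k$, so that $(Y_0\setminus D_0)\times_k\kappa\cong \A^{N+1}_\kappa$ and $(Y_1\setminus D_1)\times_k\kappa\cong \A^{N}_\kappa\times\mathbb{G}_m$. By homotopy invariance of Chow groups, $\CH_1(\A^{N+1}_\kappa)=0$ (here one uses $N\geq 1$, which holds in all cases of interest), and the localization sequence attached to $\A^{N}_\kappa\times\{0\}\subset \A^{N+1}_\kappa$ realizes $\CH_1(\A^{N}_\kappa\times\mathbb{G}_m)$ as a quotient of $\CH_1(\A^{N+1}_\kappa)$, so it vanishes as well. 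Finally, the localization exact sequences $\CH_1(D_i\times_k\kappa)\to \CH_1(Y_i\times_k\kappa)\to \CH_1\bigl((Y_i\setminus D_i)\times_k\kappa\bigr)\to 0$ for $i=0,1$ (see \cite[\S 1.8]{fulton}) yield the claimed surjectivity. The only point requiring care — the main, rather mild, obstacle — is verifying that the two projections are genuine isomorphisms onto the stated loci, not merely birational maps; this is forced by the defining equations being linear in $z$, resp.\ $w$, with leading coefficients that are invertible precisely on $Y_0\setminus D_0$, resp.\ $Y_1\setminus D_1$.
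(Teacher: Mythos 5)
Your proposal is correct and follows essentially the same route as the paper's proof: both exploit that the defining equations of $Y_0$ and $Y_1$ are linear in $z$, resp.\ $w$, so that projecting away these coordinates identifies $Y_i\setminus D_i$ with the complement of a union of hyperplanes in $\CP^{N+1}_\kappa$, whose $\CH_1$ vanishes, and then conclude by localization. The only cosmetic difference is that you compute $\CH_1$ of the complement by identifying it with $\A^{N+1}_\kappa$, resp.\ $\A^N_\kappa\times\mathbb{G}_m$, whereas the paper argues that the inclusion of the hyperplane arrangement into $\CP^{N+1}_\kappa$ is surjective on $\CH_1$; these are equivalent computations.
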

    
    \begin{proof}
     Let $L/k$ be a field extension. Recall that \begin{align*}
        Y_0 \times_k L &= \left\{f + \sum\limits_{i=0}^l a_i x_0^i y_j^i + x_0^{d-1}z = 0\right\} \subset \CP_L^{N+2}, \\
        Y_1 \times_k L &= \left\{f + \sum\limits_{i=0}^l a_i x_0^i y_j^i + x_0^{d-2}(\lambda y_j + x_0)w = 0\right\}  \subset \CP_L^{N+2}.
    \end{align*}
    We consider $Y_1 \times_k L$. The projection away from $P = [0:\dots:0:1] \in \CP_L^{N+2}$ induces a rational map $$
        \varphi \colon \CP_L^{N+2} \dashrightarrow \CP_L^{N+1}.
    $$ 
    Since $w$ appears only linearly in the defining equation of $Y_1 \times_k L$, the restriction of $\varphi$ to $Y_1 \times_k L$ yields a birational map
    $$
        Y_1 \times_k L \dashrightarrow \CP_L^{N+1},
    $$
    which induces an isomorphism between the complements of the closed subschemes $D_1 \times_k L \subset Y_1 \times_k L$ and $H_1 \coloneq  \{x_0 (\lambda y_j + x_0)=0\} \subset \CP_L^{N+1}$, respectively. 
    Since $H_1$ is a union of two hyperplanes in $\CP_L^{N+1}$, the pushforward along the natural inclusion
    $$
        \CH_1(H_1) \longrightarrow \CH_1(\CP^{N+1}_L) \cong \Z
    $$
    is surjective. 
    Thus, by the localization exact sequence (see \cite[Proposition 1.8]{fulton}), we find that $$
        \CH_1(D_1 \times_k L) \longrightarrow \CH_1(Y_1 \times_k L)
    $$
    is surjective, as $\CH_1(Y_1 \times_k L \setminus D_1 \times_k L) \cong \CH_1(\CP^{N+1}_L \setminus H_1) = 0$ by the above discussion. A similar argument shows that $$
        \CH_1(D_0 \times_k L) \longrightarrow \CH_1(Y_0 \times_k L)
    $$
    is surjective. This finishes the proof of the lemma.
\end{proof}

The following proposition is the main result of this section; it will be used for the induction step in our inductive argument.

\begin{proposition}\label{prop:inductionstep}
    Let $\Lambda$ be a ring of positive characteristic such that the exponential characteristic of $k_0$ is invertible in $\Lambda$. 
    Let $\mathcal X \to \Spec R$ be the projective family from \eqref{eq:mathcalX} with generic fibre $X$ for some $l \geq 2$. Let $Y_0$ and $Y_1$ be the irreducible components of the special fibre as in \eqref{eq:Y0} and \eqref{eq:Y1} and denote their scheme-theoretic intersection by $Z \coloneq  Y_0 \cap Y_1$. 
    Let $$
        h \in R[x_0,\dots,x_n,y_1,\dots,y_{r+1},z_1,\dots,z_s]
    $$
    be any homogeneous polynomial such that its reduction $h_0$ modulo the maximal ideal in $R$ has coefficients in $k_0$, i.e.\ $h_0 \in k_0[x_0,\dots,x_n,y_1,\dots,y_{r+1},z_1,\dots,z_s]$  and such that $X$ is smooth over $K=k(t)$ away from $W_X \coloneq  \{x_0 h = 0\} \subset X$.
    Then
    $$
        \Tor^\Lambda(Z,W_Z) \mid \Tor^\Lambda(\bar{X},W_{\bar{X}}),
    $$
    where $W_Z \coloneq  \{x_0 h_0 = 0\} \cup Z^{\sing} \subset Z$ and  $W_{\bar{X}} \coloneq  W_X \times_K \bar K  \subset \bar X \coloneq  X\times_K\bar K$. 
    In particular, $$
        \Tor^\Lambda(Z,W_Z) \mid \Tor^\Lambda(\bar{X}',W'),
    $$
    where $X'$ is as in Lemma \ref{lem:genericfibre}, $\bar{X}' \coloneq  X' \times_K \bar{K}$, and $W' \coloneq  \{x_0 z_{s+1} h = 0\} \subset \bar{X}'$.
\end{proposition}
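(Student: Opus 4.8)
The plan is to apply Theorem~\ref{thm:obstruction-proper} to the family $\mathcal X\to\Spec R$ from \eqref{eq:mathfrakX} for a suitably chosen closed subscheme $W_{\mathcal X}\subset\mathcal X$, and then to transfer the resulting divisibility from $X$ to its birational model $X'$ using Lemma~\ref{lem:genericfibre}. Thus the first displayed divisibility of the proposition is the substantive part, and the ``in particular'' is a short deduction.

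First I would verify the hypotheses of Theorem~\ref{thm:obstruction-proper}. The scheme $\mathcal X\subset\CP^{N+3}_R$ is a complete intersection of two relative hypersurfaces, hence proper, flat and separated over $R$ (the two equations form a regular sequence since $t$ is a nonzerodivisor and the special fibre has the expected dimension), and its generic fibre $X$ is geometrically integral by Corollary~\ref{cor:geomintegral}. As the closed subscheme I would take
\[
W_{\mathcal X} := \bigl(\{x_0 h = 0\}\cap\mathcal X\bigr)\;\cup\;\mathcal X^{\sing}\;\cup\;\Sigma,
\]
where $\Sigma\subset Y$ is the locus where the special fibre $Y=Y_0\cup Y_1$ fails to be snc (equivalently, the locus where $\mathcal X^\circ$ would fail to be strictly semi-stable). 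By Lemmas~\ref{lem:singmathfrakX} and~\ref{lem:singspecialfibre}, both $\mathcal X^{\sing}$ and $\Sigma$ lie in $\{x_0=0\}\cup\bigl(\{x_0(\lambda y_j+x_0)=0\}\cap Y_1\bigr)$, and by Corollary~\ref{cor:singgenericfibre} their traces on the generic fibre lie in $\{x_0=0\}$; hence $W_{\mathcal X}\cap X=\{x_0h=0\}\cap X$, so that $W_{\bar X}=\{x_0h=0\}$ as in the statement. By construction $\mathcal X^\circ:=\mathcal X\setminus W_{\mathcal X}$ is strictly semi-stable over $R$ and its special fibre $Y^\circ=Y_0^\circ\cup Y_1^\circ$ has only two components, hence no triple intersections. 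The key bookkeeping point is to check, using the last assertion of Lemma~\ref{lem:singspecialfibre} (namely $Y_1^{\sing}\cap Z\subseteq Z^{\sing}$) and the fact that $Z$ is a Cartier divisor in both $Y_0$ and $Y_1$, that $\Sigma\cap Z\subseteq Z^{\sing}$; combined with $h|_Z=h_0|_Z$ (the coefficients of $h$ lie in $R$) this yields $W_{\mathcal X}\cap Z=\{x_0h_0=0\}\cup Z^{\sing}=W_Z$, so that $Y_0^\circ\cap Y_1^\circ=Z^\circ:=Z\setminus W_Z$.

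Next I would analyse the obstruction map $\Psi^\Lambda_{Y_L^\circ}\colon \CH_1(Y_0^\circ\times_kL,\Lambda)\oplus\CH_1(Y_1^\circ\times_kL,\Lambda)\to\CH_0(Z^\circ\times_kL,\Lambda)$, whose cokernel is $\Tor^\Lambda(\bar X,W_{\bar X})$-torsion for every $L/k$ by Theorem~\ref{thm:obstruction-proper}. By Lemma~\ref{lem:CH1}: since $\{x_0=0\}\cap Y_0\subset W_{\mathcal X}$, the localization sequence forces $\CH_1(Y_0^\circ\times_kL,\Lambda)=0$; and every class in $\CH_1(Y_1^\circ\times_kL,\Lambda)$ is a push-forward from the Cartier divisor $\{x_0(\lambda y_j+x_0)=0\}\cap Y_1^\circ$ of $Y_1^\circ$, so by the projection formula (and $Z^\circ\subset Y_1^\circ$ being Cartier) the image of $\gamma_1$ in $\CH_0(Z^\circ\times_kL,\Lambda)$ is supported on $\{x_0(\lambda y_j+x_0)=0\}\cap Z^\circ$. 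Feeding this information back into the cokernel statement and specialising to $L=k(Z)$ applied to the diagonal point, one concludes that $\delta_{Z^\circ}\in\CH_0(Z^\circ\times_k k(Z),\Lambda)$ is $\Tor^\Lambda(\bar X,W_{\bar X})$-torsion, that is, $\Tor^\Lambda(Z,W_Z)\mid\Tor^\Lambda(\bar X,W_{\bar X})$. Finally, Corollary~\ref{cor:singgenericfibre} shows $\bar X^{\sing}\subseteq\{x_0=0\}$, so $\bar X\setminus W_{\bar X}$ is a smooth open subvariety, and Lemma~\ref{lem:genericfibre}\eqref{item:lem:genericfibre1} identifies it, compatibly with the identification of function fields and diagonal points, with $\bar X'\setminus W'$ (here $W'=\{x_0z_{s+1}h=0\}$, the polynomial $h$ being transported through the coordinate change of Lemma~\ref{lem:genericfibre}; when $h$ does not involve $y_j$ this is literally $h$). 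Since $\Tor^\Lambda(\,\cdot\,,\,\cdot\,)$ of a variety relative to a closed subset depends only on the open complement, $\Tor^\Lambda(\bar X,W_{\bar X})=\Tor^\Lambda(\bar X',W')$, and the ``in particular'' follows.

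The main obstacle is the step in the third paragraph where one passes from the cokernel of $\Psi^\Lambda_{Y_L^\circ}$ to the torsion statement for $\delta_{Z^\circ}$. In contrast to the situations of \cite{PavicSch,Lange}, the component $Y_1$ is only birational (not isomorphic away from a divisor) to projective space, so the ``extra'' divisor $\{\lambda y_j+x_0=0\}$ in $Y_1$ makes $\CH_1(Y_1^\circ\times_kL,\Lambda)$ genuinely nontrivial, and its contributions feed cycles supported on $\{\lambda y_j+x_0=0\}\cap Z^\circ$ into the image of $\Psi$; the heart of the argument (this is the new input making the double cone construction work in the present generality, and the reason for the hypothesis $l\geq 2$ in \eqref{eq:mathfrakX}) is to control these contributions and show they do not enlarge the relevant subset of $Z$ beyond $W_Z$. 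The remaining ingredients—flatness of $\mathcal X$, strict semi-stability of $\mathcal X^\circ$, and the identification $W_{\mathcal X}\cap Z=W_Z$—are a routine, if somewhat lengthy, verification using Lemmas~\ref{lem:singmathfrakX}, \ref{lem:singspecialfibre} and Corollary~\ref{cor:singgenericfibre}.
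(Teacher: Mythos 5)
Your Step~1 (choice of $W_{\mathcal X}$ and verification of the hypotheses of Theorem~\ref{thm:obstruction-proper}) and your Step~2 (identifying the cokernel of $\Psi^\Lambda_{Y^\circ_L}$ with the obstruction, using Lemma~\ref{lem:CH1} to kill $\CH_1(Y_0^\circ)$ and to represent every class in $\CH_1(Y_1^\circ)$ by a cycle supported on $\{x_0(\lambda y_j+x_0)=0\}$) match the paper's proof. But you correctly flag in your last paragraph that this leaves a genuine gap, and you do not fill it: you get only
\[
m\cdot\delta_{Z^\circ}=(\iota^\ast\gamma)\cdot_{Y_1^\circ}Z^\circ
\quad\text{with }\gamma\text{ supported on }\{\lambda y_j+x_0=0\},
\]
and there is no general reason why the right-hand side should vanish. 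Your wording ``one concludes that $\delta_{Z^\circ}$ is $\Tor^\Lambda(\bar X,W_{\bar X})$-torsion'' asserts exactly what remains to be shown, and your suggested reason ($l\geq 2$) is a red herring --- the hypothesis $l\geq 2$ plays no role in controlling these contributions.

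The missing idea (the paper's Step~3) is to specialize $\lambda\to 0$. Since $f$, the $a_i$, and $h_0$ have coefficients in $k_0$, the open scheme $Z^\circ$ and the families $\mathcal Z,\mathcal Y_1$ over $B=k_0[\lambda]_{(\lambda)}$ have $\lambda$-independent generic fibres (up to base field), so Fulton's specialization (Lemma~\ref{lem:specialization-functoriality}) sends $\delta_{Z^\circ}$ to $\delta_{Z^\circ_0}$, where $Z_0$ is the ``same'' hypersurface over $\overline{k_0}$. Crucially, the troublesome divisor $\{\lambda y_j+x_0=0\}\subset Y_1$, on which $\gamma$ is supported, degenerates under $\lambda\to 0$ to $\{x_0=0\}$, which is contained in the specialization of $W_{Y_1}$. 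Hence the specialized one-cycle restricts to zero in $\CH_1(Y^\circ_{1,0})$, giving $m\cdot\delta_{Z^\circ_0}=0$, i.e.\ $\Tor^\Lambda(Z_0,W_{Z_0})\mid m$; by Lemma~\ref{lem:torsion-order-observations}\eqref{item:0:torsion-order-observations} this equals $\Tor^\Lambda(Z,W_Z)$. This specialization is why $k$ is chosen as $\overline{k_0(\lambda)}$ in Section~\ref{section:doublecone} and why the base polynomials are required to have coefficients in $k_0$ --- points your setup records but whose purpose you do not exploit. Your ``in particular'' deduction at the end is correct.
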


\begin{remark}
As a consequence of Lemma \ref{lem:singmathcalX}, the smoothness of $X \setminus W_X$ is automatically satisfied in characteristic $0$.
This is not true in general, but we will choose in our applications $W_X$ carefully so that the condition holds (over any field). 
\end{remark} 

\begin{proof}
    Since $\Lambda$ is a ring of positive characteristic, 
    $$
    m\coloneq \Tor^\Lambda(\bar{X},W_{\bar{X}})
    $$ is a positive integer $m \in \Z_{\geq 1}$. 

    \textbf{Step 1.} 
    We will check that the assumptions in Theorem \ref{thm:obstruction-proper} are satisfied for the projective family $\mathcal{X} \to \Spec R$
    from \eqref{eq:mathcalX} with the closed subset $$
        W_{\mathcal X} \coloneq  \{x_0h = 0\} \cup Y_1^{\sing} \cup Z^{\sing} \subset \mathcal X.
    $$
    As in Theorem \ref{thm:obstruction-proper}, let $W_Y \coloneq  W_{\mathcal{X}} \cap Y$. We note that $W_X$ agrees with $W_{\mathcal{X}} \cap X$.
   The generic fibre $X$ of $\mathcal X \to \Spec R$ is geometrically integral by Corollary \ref{cor:geomintegral}. 
  The special fibre $Y^\circ \coloneq  Y \setminus W_Y$ consists of two components $Y^\circ = Y_0^\circ \cup Y_1^\circ$ such that $Y_0^\circ$, $Y_1^\circ$, and their intersection $Z^\circ \coloneq  Y_0^\circ \cap Y_1^\circ$ are smooth and integral, see also Lemma \ref{lem:singspecialfibre}.
  In particular, $Y^\circ$ is an snc scheme, see Definition \ref{def:snc}.
    The singular locus of $\mathcal X$ is contained in $\{x_0 = 0\}$ by Lemma \ref{lem:singmathcalX}. 
    It follows from this that $Y^\circ_i$ is a Cartier divisor on $\mathcal X^\circ$ for $i=0,1$. The generic fibre of the $R$-scheme $\mathcal{X}^\circ \coloneq  \mathcal X \setminus W_{\mathcal X}$ is equal to $X \setminus W_X$ and thus smooth by assumption. 
    In particular, $\mathcal{X}^\circ\to \Spec R$ is strictly semi-stable, see Definition \ref{def:strictly-semi-stable}.
    It follows that the assumptions \eqref{item:thm:obstruction-proper-strictlysemistable} and \eqref{item:thm:obstruction-proper-3intersection} in Theorem \ref{thm:obstruction-proper} are satisfied for $\mathcal X \to \Spec R$ and the closed subset $W_{\mathcal{X}} \subset \mathcal X$.
    This concludes Step 1.

Recall that $Z$ is integral and let $\delta_{Z^\circ}\in  Z^\circ_{k(Z)}$ denote the diagonal point of $Z^\circ$, which is dense open in $Z$.

\textbf{Step 2.}
We will show that there is a one-cycle $\gamma \in \CH_1(Y_{1} \times_k k(Z),\Lambda)$, supported on $\{x_0(\lambda y_j + x_0) =0 \} \subset Y_1\times_k k(Z)$, such that
    \begin{equation}\label{eq:deltaZ}
        m \cdot \delta_{Z^\circ} = (\iota^\ast \gamma)|_{Z^\circ_{k(Z)}} \in \CH_0(Z^\circ_{k(Z)},\Lambda) ,
    \end{equation}
    where $ \iota \colon Y_1^\circ \times_k k(Z) \hookrightarrow Y_1 \times_k k(Z)$ denotes the natural open embedding and $(\iota^\ast \gamma)|_{Z^\circ_{k(Z)}}$ the pullback of the one-cycle to the Cartier divisor $Z^\circ_{k(Z)} \subset Y_1^\circ \times_k k(Z)$ along the natural regular embedding.

    By Step 1, Theorem \ref{thm:obstruction-proper} implies that the cokernel of the map $$
        \Psi^\Lambda_{Y_L^\circ} \colon \CH_1(Y_0^\circ \times_k L,\Lambda) \oplus \CH_1(Y_1^\circ \times_k L,\Lambda) \longrightarrow \CH_0(Z^\circ \times_k L,\Lambda)
    $$
    from \eqref{eq:Psi} (with $0 < 1$) is $m$-torsion for every field extension $L/k$.
    In particular,  
    $$
        m \cdot \delta_{Z^\circ} \in \im \Psi^\Lambda_{Y^\circ_{k(Z)}}.
    $$
     Note that $\CH_1(Y_0^\circ \times_k k(Z),\Lambda) = 0$ by Lemma \ref{lem:CH1} and that the pull-back
    $$
        \iota^\ast \colon \CH_1(Y_1 \times_k k(Z),\Lambda) \longrightarrow \CH_1(Y_1^\circ \times_k k(Z),\Lambda)
    $$ 
    is surjective by \cite[Proposition 1.8]{fulton}. 
    Hence there exists a one-cycle $\gamma \in \CH_1(Y_{1} \times_k k(Z),\Lambda)$ such that $ m \cdot \delta_{Z^\circ} = (\iota^\ast \gamma)|_{Z^\circ_{k(Z)}}$. 
    By Lemma \ref{lem:CH1} we can further assume that $\gamma$ is supported on $\{x_0(\lambda y_j + x_0) =0 \} \subset Y_1\times_k k(Z)$.
    This concludes Step 2.

\textbf{Step 3.}
    We specialize now $\lambda \to 0$ and aim to compute the image of \eqref{eq:deltaZ} under the corresponding specialization map on Chow groups (see Lemma \ref{lem:specialization-functoriality}); we will show that the specialization of the one-cycle $\gamma$ vanishes and so does the specialization of $m\cdot \delta_{Z^\circ}$.  
    
    Consider the discrete valuation ring $B = k_0[\lambda]_{(\lambda)}$ with residue field $k_0$ and fraction field $k_0(\lambda)$. Recall that $k$ is an algebraic closure of $k_0(\lambda)$. Then consider the flat projective $B$-schemes
    $$
        \begin{aligned}
            \mathcal{Z} &\coloneq  \left\{f + \sum\limits_{i=0}^l a_i x_0^i y_j^i = 0\right\} \subset \CP^{N+1}_B, \\
            \mathcal{Y}_1 &\coloneq  \left\{f + \sum\limits_{i=0}^l a_i x_0^i y_j^i + x_0^{d-2}(\lambda y_j + x_0)w = 0\right\} \subset \CP^{N+2}_B,
        \end{aligned}
    $$
    where $f$ and $a_i$ are as in \eqref{eq:polynomials}. 
    Note that $Z$ and $Y_1$ are the geometric generic fibre of $\mathcal{Z}$ and $\mathcal{Y}_1$, respectively.
    Let $\mathcal{Z}^\circ \subset \mathcal{Z}$ and $\mathcal{Y}_{1}^\circ \subset \mathcal{Y}_1$ be the complement of the closure of $W_Z$ in $\mathcal{Z}$ and of $W_{Y_1}$ in $\mathcal{Y}_1$, respectively.
    Note that $Z^\circ = Z \setminus W_Z$ and $Y_1^\circ = Y_1 \setminus W_{Y_1}$ are the geometric generic fibres of $\mathcal{Z}^\circ$ and $\mathcal{Y}_1^\circ$, respectively. We denote the special fibres by $Z_0^\circ$ and $Y_{1,0}^\circ$, respectively. 
    By Lemma \ref{lem:specialization-functoriality}, there exist specialization maps induced by Fulton's specialization map for the flat $\mathcal{O}_{\mathcal{Z},Z_0}$-schemes $\mathcal{Z}^\circ \times_B \mathcal{O}_{\mathcal{Z},Z_0}$, $\mathcal{Y}_1^\circ \times_B \mathcal{O}_{\mathcal{Z},Z_0}$, and $\mathcal{Y}_1 \times_B \mathcal{O}_{\mathcal{Z},Z_0}$    
    $$
        \begin{aligned}
            \specialization_{Z^\circ} &\colon \CH_0(Z^\circ \times_k k(Z),\Lambda) \longrightarrow \CH_0(Z_0^\circ \times_{k_0} k_0(Z_0),\Lambda), \\
            \specialization_{Y_1^\circ} &\colon \CH_1(Y_1^\circ \times_k k(Z),\Lambda) \longrightarrow \CH_1(Y_{1,0}^\circ \times_{k_0} k_0(Z_0),\Lambda), \\
            \specialization_{Y_1} &\colon \CH_1(Y_1 \times_k k(Z),\Lambda) \longrightarrow \CH_1(Y_{1,0} \times_{k_0} k_0(Z_0),\Lambda),
        \end{aligned}
    $$
    where $Y_{1,0}$ denotes the special fibre of the $B$-scheme $\mathcal{Y}_1$. 

    We apply now the specialization $\specialization_{Z^\circ}$ to the zero-cycle \eqref{eq:deltaZ}. By Lemma \ref{lem:specialization-functoriality}, we get
    \begin{equation}\label{eq:SpecializationdeltaZ}
        m \cdot \delta_{Z_0^\circ} = \specialization_{Z^\circ}(m \cdot \delta_{Z^\circ}) = \specialization_{Z^\circ}((\iota^\ast \gamma)|_{Z^\circ_{k(Z)}}) = \specialization_{Y_1^\circ}(\iota^\ast \gamma)|_{Z^\circ_{0,k_0(Z_0)}},
    \end{equation}
    where $|_{Z^\circ_{k(Z)}}$ and $|_{Z^\circ_{0,k_0(Z_0)}}$ denote the pullback along the regular embedding of the Cartier divisor $Z^\circ_{k(Z)}$ and $Z^\circ_{0,k_0(Z_0)}$ in $Y_1^\circ \times_k k(Z)$ and $Y_{1,0}^\circ \times_{k_0} k_0(Z_0)$, respectively. 
    Recall that the one-cycle $\gamma$ is supported on $\{x_0(\lambda y_j + x_0) =0 \} \subset Y_1\times_k k(Z)$. Thus the specialization $\specialization_{Y_1}(\gamma) \in \CH_1(Y_{1,0},\Lambda)$ is supported on $\{x_0^2 =0\} \subset Y_{1,0}$. In particular
    $$
        \specialization_{Y_1^\circ}(\iota^\ast \gamma) = 0 \in \CH_1(Y_{1,0}^\circ),
    $$
    as the subset $\{x_0 = 0\} \subset Y_{1,0}$ is contained in the specialization of $W_{Y_1}$ and $\specialization$ commutes with pullbacks along open immersions. 
    Hence, the right hand side of \eqref{eq:SpecializationdeltaZ} vanishes, which concludes Step 3. 
    
    By Step 3, 
    $$
        \Tor^\Lambda(Z_0,W_{Z_0}) \mid m,
    $$
    where $W_{Z_0} \subset Z_0$ is the specialization of $W_Z \subset Z$.
    We note that $Z = Z_0 \times_{k_0} k$ and $W_Z = W_{Z_0} \times_{k_0} k$, because the defining equations of $Z$ and $W_Z$ are defined over $k_0$.
    Hence, the proposition follows from Lemma \ref{lem:torsion-order-observations} \eqref{item:0:torsion-order-observations}, as $m = \Tor^\Lambda(\bar{X},W_{\bar{X}})$.
\end{proof}

\section{Base case}

Our argument will rely on an inductive application of a degeneration as in Section \ref{section:doublecone}.
For the start of the induction we will use the explicit example of a singular hypersurface with nontrivial unramified cohomology from the proof of \cite[Theorem 7.1]{Sch-torsion}. 
We recall the example in what follows.

Let $k$ be an algebraically closed field and let $m \geq 2$ be an integer coprime to the exponential characteristic of $k$. 
Let $n \geq 2$ and $r \leq 2^n-2$ be positive integers.
Let $x_0,\dots,x_n,y_1,\dots,y_{r+1}$ be the coordinates of $\CP^{n+r+1}$ and let $\pi \in k$ be an element that is transcendental over the prime field of $k$.
Consider the homogeneous polynomial from \cite[Equation (21)]{Sch-torsion}
\begin{equation}\label{eq:gfromSch-torsion}
    g(x_0,\dots,x_n) \coloneq  \pi \cdot  \left(\sum\limits_{i=0}^n x_i^{\left\lceil \frac{n+1}{m}\right\rceil}\right)^m - (-1)^n x_0^{m \left\lceil\frac{n+1}{m}\right\rceil - n} x_1 x_2 \cdots x_n
\end{equation}
in $k[x_0,\dots,x_n]$ of degree $\deg g = m \left\lceil \frac{n+1}{m}\right\rceil  \leq n+m$. 
Using this we define the homogeneous polynomial
\begin{equation*}
    F \coloneq  g(x_0,\dots,x_n) x_0^{m+n-\deg(g)} + \sum\limits_{j = 1}^{r} x_0^{n-\deg c_j} c_j(x_1,\dots,x_n) y_j^m + (-1)^n x_1 x_2 \dots x_n y_{r+1}^m
\end{equation*}
in $k[x_0,\dots,x_n,y_1,\dots,y_{r+1}]$ of degree $m+n$, where \begin{equation}\label{eq:FermatPfister}
    c_j(x_1,\dots,x_n) \coloneq  (-x_1)^{\varepsilon_1} (-x_2)^{\varepsilon_2} \cdots (-x_n)^{\varepsilon_n} \in k[x_1,\dots,x_n]
\end{equation}
with $\varepsilon_i$ the $(i-1)$-th digit in the $2$-adic representation of $j$, i.e.\ $j = \sum_{i} \varepsilon_i 2^{i-1}$ with $\varepsilon_i \in \{0,1\}$.
Consider the associated hypersurface
\begin{equation} \label{eq:Z-Sch-torsion}
Z\coloneq \{F = 0\} \subset \CP^{n+r+1}_k
\end{equation}
of degree $m+n$.

\begin{theorem} \label{thm:InductionStart}
Let $l\in k[x_0,\dots ,x_n]$ be any nontrivial homogeneous polynomial and consider 
$$
W_{Z} \coloneq  \{ y_{r+1} l = 0\} \cup Z^{\sing} \subset Z 
$$
where $Z$ is as in \eqref{eq:Z-Sch-torsion}.
Then $\Tor^{\Z/m}(Z,W_Z)=m$.
\end{theorem}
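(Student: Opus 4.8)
The plan is to detect the class $\delta_U\in\CH_0(U_{k(Z)},\Z/m)$ --- where $U:=Z\setminus W_Z$, so that $\Tor^{\Z/m}(Z,W_Z)$ is by definition its order --- by pairing it with the nontrivial unramified cohomology class on $Z$ produced in the proof of \cite[Theorem 7.1]{Sch-torsion}, carrying the additional divisor $\{l=0\}$ through the argument. The upper bound $\Tor^{\Z/m}(Z,W_Z)\mid m$ is automatic, since $\CH_0(U_{k(Z)},\Z/m)$ is annihilated by $m$. Note also that $U$ is a nonempty smooth $k$-variety: it is smooth because $W_Z\supseteq Z^{\sing}$, and dense in $Z$ because $Z$ is irreducible and its defining polynomial $F$ involves the variable $y_{r+1}$, so $Z\not\subseteq\{l=0\}$. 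It remains to show $m\mid\Tor^{\Z/m}(Z,W_Z)$.

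I would recall from the proof of \cite[Theorem 7.1]{Sch-torsion} the nonzero class $\alpha\in H^{n+1}_{\mathrm{nr}}(k(Z)/k,\Z/m)$ of order exactly $m$ (with its natural Tate twist). It is built from the projection $p\colon Z\dashrightarrow\CP^n_k$ onto the coordinates $x_0,\dots,x_n$, whose generic fibre is a Fermat--Pfister type hypersurface, and --- this is the property I want to isolate --- the construction, together with a cohomological dimension estimate using $k=\bar k$, shows that $\alpha$ restricts to zero on $p^{-1}(B)$ for every proper closed subvariety $B\subsetneq\CP^n_k$: on such a pre-image the restriction of $\alpha$ is governed by $k(B)$, a field of transcendence degree $\le n-1<n+1$ over $k$.

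The decisive step is that $W_Z$ is small relative to $p$: its image lies in a proper closed subset $B_0\subsetneq\CP^n_k$. Indeed $\{l=0\}$ maps into the hypersurface $\{l=0\}\subset\CP^n_k$; the indeterminacy locus $\{x_0=\dots=x_n=0\}$ of $p$ is contained in $Z^{\sing}$, and any resolution replaces it by a divisor which is a projective bundle over $\CP^{r}$, hence rational, so $\alpha$ restricts to zero there; and the rest of $Z^{\sing}$ maps into the coordinate divisor $\{x_1\cdots x_n=0\}\subset\CP^n_k$ --- this is checked as in \cite{Sch-torsion} using that $\pi$ is transcendental over the prime field, since over a point with all $x_i\neq 0$ the equation $F=0$ together with the vanishing of the partial derivatives would force $\pi$ times an algebraic number to equal an algebraic number. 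Passing to a smooth projective model $\tilde Z$ of $k(Z)$ (a resolution of $Z$ in characteristic zero, or an alteration of degree prime to $m$ in general, exactly as in the proof of Lemma~\ref{lem:weak-decomposiiton-of-diagonal}), we may arrange that $\tilde U:=\tilde Z\setminus\tilde W$ is the pre-image of $U$ and that every component of $\tilde W$ either dominates a subvariety of $B_0$ or is ruled over a rational or lower-dimensional base; in all cases $\alpha$ restricts to zero on it.

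Finally, as $\tilde Z$ is smooth and projective and $\alpha$ is unramified on $\tilde Z$, the standard pairing between unramified cohomology and zero-cycles gives a homomorphism $\rho\colon\CH_0(\tilde Z_{k(Z)},\Z/m)\to H^{n+1}(k(Z),\Z/m)$ with $\rho(\delta_{\tilde Z})=\alpha$ which annihilates the image of $\CH_0(\tilde W_{k(Z)},\Z/m)$, because $\alpha$ restricts to zero on every component of $\tilde W$. If some proper divisor $m'\mid m$ satisfied $m'\cdot\delta_U=0$, then by the localization sequence $m'\cdot\delta_{\tilde Z}$ would lie in the image of $\CH_0(\tilde W_{k(Z)},\Z/m)$, whence $m'\alpha=\rho(m'\delta_{\tilde Z})=0$, contradicting that $\alpha$ has order $m$; transporting this back from $\tilde Z$ to $Z$ via the correspondence argument in the proof of Lemma~\ref{lem:weak-decomposiiton-of-diagonal} then gives $\Tor^{\Z/m}(Z,W_Z)=m$. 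The main obstacles are the two facts drawn from \cite{Sch-torsion} --- the existence and precise shape of $\alpha$, and that it vanishes on the pre-image of every proper subvariety of $\CP^n_k$ --- together with the bookkeeping needed to pass to the smooth projective model $\tilde Z$ while keeping the exceptional divisors of the resolution under control.
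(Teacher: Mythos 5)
Your overall strategy matches the paper's: invoke the unramified class from \cite{Sch-torsion} built from the projection to $\CP^n$, show it vanishes on the preimage of $W_Z$, then detect $\delta_U$ via the Merkurjev pairing after passing to an alteration of degree prime to $m$. However, there is a genuine gap in your treatment of the exceptional divisor of $Bl_P Z$, $P=\{x_0=\dots=x_n=0\}$. You claim that ``any resolution replaces it by a divisor which is a projective bundle over $\CP^r$, hence rational, so $\alpha$ restricts to zero there.'' This is not correct: $P\cong\CP^r\subset Z$, the hypersurface $Z$ has multiplicity exactly $n$ along $P$, and the exceptional divisor of $Bl_P Z$ is the projectivized normal cone, which in the coordinates of the paper is the bidegree $(n,m)$ hypersurface
$$
\sum_{j=1}^r x_0^{\,n-\deg c_j}c_j(x_1,\dots,x_n)\,y_j^m+(-1)^n x_1\cdots x_n\,y_{r+1}^m=0
$$
inside $\CP^n\times\CP^r$. (You are presumably thinking of the exceptional divisor of $Bl_P\CP^{n+r+1}$, which is indeed $\CP^n\times\CP^r$; but the exceptional divisor of $Bl_P Z$ is a hypersurface in it, and it dominates $\CP^n$.) Since this divisor dominates $\CP^n$ via the projection $f$, the cohomological-dimension argument you use for subvarieties mapping into proper closed $B\subsetneq\CP^n$ does not apply, and no rationality claim is available. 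The paper handles precisely this case by citing the nontrivial norm-form computation of \cite[Theorem 5.3]{Sch-torsion}, which shows that $f^\ast\alpha$ already vanishes at the generic point of the exceptional divisor, and then uses the Bloch--Ogus sequence to extend this vanishing to all subvarieties of $Y'$ mapping into $W_Z$. Without this input your argument does not close.

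Two smaller issues: the class $\alpha$ (and its pullback $\gamma=f^\ast\alpha$) lives in degree $n$, namely $H^n_{\mathrm{nr}}(k(Z)/k,\mu_m^{\otimes n})$, not degree $n+1$; your cohomological-dimension estimate still goes through since $\operatorname{trdeg}_k k(B)\le n-1<n$ for $B\subsetneq\CP^n$, but the degree should be fixed. Also, the claim that the remainder of $Z^{\sing}$ maps into $\{x_1\cdots x_n=0\}$ is asserted without verification; in the paper this is subsumed into the statement that $\gamma$ vanishes on any subvariety mapping into $W_Z$, which again relies on \cite[Theorem 5.3]{Sch-torsion} and \cite{BO} rather than an explicit description of $Z^{\sing}$.
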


We have the following immediate corollary.

\begin{corollary}\label{cor:Induction-Start-3}
    Let $h,l',l'' \in k[x_0,\dots,x_n]$ be nontrivial homogeneous polynomials such that $h$ is irreducible of degree $m + n + \deg l'$. Let $k' = k(\rho)$ be a purely transcendental field extension of $k$. Consider the hypersurface
    $$
        Z_\rho \coloneq  \{\rho h + l' \cdot F = 0\} \subset \CP^{n+r+1}_{k'}
    $$
    of degree $m + n + \deg l'$. Then $\Tor^{\Z/m}(\bar{Z}_\rho,W_{\bar{Z}_\rho}) = m$, where $\bar{Z}_\rho = Z_\rho \times_{k'} \Bar{k'}$ and
    $$
        W_{\bar{Z}_\rho} \coloneq   \{y_{r+1}l'l'' = 0\} \cup Z_\rho^{\sing} \times_{k'} \bar{k'}  \subset \bar{Z}_\rho. 
    $$
\end{corollary}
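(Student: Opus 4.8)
Since the group $\CH_0(-,\Z/m)$ is $m$-torsion, the relative torsion order $\Tor^{\Z/m}(\bar Z_\rho,W_{\bar Z_\rho})$ automatically divides $m$; so the plan is to establish the reverse divisibility $m\mid\Tor^{\Z/m}(\bar Z_\rho,W_{\bar Z_\rho})$ by degenerating $Z_\rho$ to a variety built from the hypersurface $Z$ of \eqref{eq:Z-Sch-torsion} and invoking Theorem \ref{thm:InductionStart}. Concretely, I would set $R:=k[\rho]_{(\rho)}$, a discrete valuation ring with residue field $k$ and fraction field $k'$, and form the $R$-scheme
$$
\mathcal Z:=\{\rho h+l'F=0\}\subset\CP^{n+r+1}_R,
$$
which is flat over $R$ since its defining equation does not vanish modulo $\rho$. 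Its generic fibre is $Z_\rho$, and its special fibre is $\{l'F=0\}=Z\cup\{l'=0\}$, where $Z=\{F=0\}$ is precisely the hypersurface of Theorem \ref{thm:InductionStart}; on the locus $\{l'\neq0\}$ the special fibre is scheme-theoretically $\{F=0\}$, so $Z$ occurs reducedly.

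The next step is to apply Lemma \ref{lem:Lambda-torsion-order-specialize} to $\mathcal Z\to\Spec R$ with the closed subset
$$
W_{\mathcal Z}:=(\{l'=0\}\cap\mathcal Z)\cup(\{l''=0\}\cap\mathcal Z)\cup\overline{Z_\rho^{\sing}}\subset\mathcal Z,
$$
where $\overline{Z_\rho^{\sing}}$ is the closure of the singular locus of the generic fibre. Taking the full schemes $\{l'=0\}\cap\mathcal Z$ rather than merely the closures of $\{l'=0\}\cap Z_\rho$ leaves the generic fibre $W_X=W_{\mathcal Z}\cap X$ unchanged — it still base-changes over $\bar{k'}$ to $W_{\bar Z_\rho}$ — but forces the special fibre of $W_{\mathcal Z}$ to contain the whole component $\{l'=0\}$ of $Z\cup\{l'=0\}$. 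Hence the fibres of $\mathcal Z\setminus W_{\mathcal Z}$ over $R$ are non-empty and geometrically integral: the special one is the dense open $Z\setminus W''$ of the integral hypersurface $Z$, where $W'':=Z\cap(W_{\mathcal Z}\times_R k)$, and the generic one is an open of $\bar Z_\rho$, which is a variety since $\rho h+l'F$ is geometrically irreducible. (This is routine: $h$ is irreducible and does not divide $l'F$, because $F$ involves the $y$-variables and $h$ does not while $\deg h=\deg(l'F)=m+n+\deg l'$; hence the associated map $\CP^{n+r+1}_k\dashrightarrow\CP^1$ has the integral fibre $\{h=0\}$ of full degree, which forces its generic fibre to be integral, and it is separable as $m$ is invertible in $k$.) With these hypotheses in place, Lemma \ref{lem:Lambda-torsion-order-specialize} together with Remark \ref{rem:torsion-order-X-reducible} yields $\Tor^{\Z/m}(Z,W'')\mid\Tor^{\Z/m}(\bar Z_\rho,W_{\bar Z_\rho})$.

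It remains to bound $W''$. Its parts coming from $\{l'=0\}\cap\mathcal Z$ and $\{l''=0\}\cap\mathcal Z$ are contained in $\{l'l''=0\}\cap Z$. Its part coming from $\overline{Z_\rho^{\sing}}\times_R k$, intersected with $Z$: at a point where $l'\neq0$ one divides the equation of $\mathcal Z$ by the local unit $l'$, rewrites it as $F+\rho(h/l')=0$, and observes that setting $\rho=0$ in the equations cutting out $Z_\rho^{\sing}$ lands inside $\{F=0,\ \partial F=0\}=Z^{\sing}$; at a point where $l'=0$ it lies in $\{l'=0\}\cap Z$. Thus $W''\subseteq\{l'l''=0\}\cup Z^{\sing}$, so Theorem \ref{thm:InductionStart}, applied with the nontrivial homogeneous polynomial $l:=l'l''\in k[x_0,\dots,x_n]$, gives $\Tor^{\Z/m}\bigl(Z,\{l'l''=0\}\cup Z^{\sing}\bigr)=m$, whence $m\mid\Tor^{\Z/m}(Z,W'')$ by Lemma \ref{lem:torsion-order-observations}\eqref{item:torsion-order-decrease-for-larger-sets}. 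Combining the two divisibilities with $\Tor^{\Z/m}(\bar Z_\rho,W_{\bar Z_\rho})\mid m$ forces equality. The main obstacle is precisely this last step: controlling that the singular locus of $Z_\rho$ specializes, away from $\{l'=0\}$, into $Z^{\sing}$ and not into a larger subset of $Z$, since that is exactly what allows $W''$ to be fed into Theorem \ref{thm:InductionStart}; verifying the geometric irreducibility of $\rho h+l'F$ is a subsidiary technical point.
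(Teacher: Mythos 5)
Your proposal is correct and follows essentially the same route as the paper: specialize $\rho\to 0$ over $R=k[\rho]_{(\rho)}$, observe that the special fibre $\{l'F=0\}$ has integral open part $Z\setminus(\{l'l''=0\}\cup Z^{\sing})$ matching the setup of Theorem~\ref{thm:InductionStart} with $l=l'l''$, and combine Lemma~\ref{lem:Lambda-torsion-order-specialize} with the trivial bound $\Tor^{\Z/m}\mid m$. The paper's own proof is terser (it simply writes down $(Z_0,W_{Z_0})$ and identifies $U_0$ with $Z\setminus W_Z$), whereas you explicitly construct a suitable $W_{\mathcal Z}$ containing the whole divisor $\{l'=0\}\cap\mathcal Z$, verify the geometric-integrality hypothesis of Lemma~\ref{lem:Lambda-torsion-order-specialize}, and track the specialization of $Z_\rho^{\sing}$ into $\{l'=0\}\cup Z^{\sing}$; this extra care is consonant with (and arguably makes more transparent) what the paper leaves implicit.
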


\begin{proof}
    Consider the pair $(Z_\rho,W_{Z_\rho})$ and let $(Z_0,W_{Z_0})$ be the pair obtained by specializing $\rho \to 0$, i.e.
    $$
        \begin{aligned}
            Z_0 &= \{l' \cdot F = 0\} \subset \CP^{n+r+1}_k, \\
            W_{Z_0} &= \{y_{r+1}l'l'' =0 \} \cup Z_{0}^{\sing} \subset Z_0.
        \end{aligned}
    $$
    Note that the scheme $Z_0$ is reducible, but the open subscheme $U_0 \coloneq  Z_0 \setminus W_{Z_0} \subset Z_0$ is integral as the polynomial $F$ is irreducible. Hence, the torsion order $\Tor^{\Z/m}(Z_0,W_{Z_0})$ is defined, see Remark \ref{rem:torsion-order-X-reducible}.
    We observe that $U_0 = Z \setminus W_Z$, where the pair $(Z,W_Z)$ is as in Theorem \ref{thm:InductionStart} with $l = l' \cdot l'' \in k[x_0,\dots,x_n]$. Thus, we get $\Tor^{\Z/m}(Z_0,W_{Z_0}) = m$ by Theorem \ref{thm:InductionStart}.
    Hence, $m$ divides $\Tor^{\Z/m}(\bar{Z}_\rho,W_{\bar{Z}_\rho})$ by Lemma \ref{lem:Lambda-torsion-order-specialize}. 
    Conversely, any $\Z/m$-torsion order can be at most $m$. Hence, $\Tor^{\Z/m}(\bar{Z}_\rho,W_{\bar{Z_{\rho}}}) =m$, as claimed.
\end{proof}

\begin{proof}[Proof of Theorem \ref{thm:InductionStart}]
This follows from arguments similar to those in the proof of \cite[Theorem 6.1 and 7.1]{Sch-torsion};
we give some details for the reader's convenience.
Let $P=\{x_0=\dots=x_n=0 \}\subset \CP^{n+r+1}_k$ and consider the blow-up $Y\coloneq Bl_PZ$, which can be described via the vanishing locus of 
 \begin{equation}\label{eq:induction-start-first-resolution}  
g(x_0,\dots,x_n) x_0^{m+n-\deg(g)} y_0^m + \sum\limits_{j = 1}^{r} x_0^{n-\deg c_j} c_j(x_1,\dots,x_n) y_j^m + (-1)^n x_1 x_2\dots x_n y_{r+1}^m, 
\end{equation}  
inside the projective bundle $ \CP_{\CP^n}(\mathcal O(-1)\oplus \mathcal O^{\oplus (r+1)})$ over $\CP^n$,
where $y_0$ denotes a local coordinate that trivializes $\mathcal O(-1)$ and $y_1,\dots ,y_{r+1}$ trivialize $\mathcal O^{\oplus (r+1)}$.
The projection to the $x$-coordinates induces a morphism $f:Y\to \CP^n_k$.
We furthermore pick an alteration $\tau':Y'\to Y$ of order coprime to $m$, which can be done by \cite{temkin}, see also \cite{IT}.
The corresponding alteration of $Z$ is denoted by $\tau:Y'\to Z$.

As detailed in \cite[\S 7]{Sch-torsion}, an application of \cite[Theorem 5.3]{Sch-torsion} shows that the pullback of the class $\alpha=(x_1/x_0,\dots ,x_n/x_0)\in H^n(k(\CP^n),\mu_m^{\otimes n})$ yields an unramified class
$$
\gamma\coloneq f^\ast \alpha\in H^n_{nr}(k(Y)/k,\mu_m^{\otimes n})=H^n_{nr}(k(Z)/k,\mu_m^{\otimes n})
$$
of order $m$ such that for any subvariety $E\subset Y'$ which does not dominate $\CP^n$ via $ f\circ \tau'$, the class $(\tau')^\ast \gamma$ vanishes in $H^n(k(E),\mu_m^{\otimes n})$.
Moreover, the class $f^\ast \alpha$ vanishes at the generic point of the exceptional divisor $D_0$ of $Y\to Z$, which is cut out by $y_0=0$, 
    and at the generic point of the strict transform $D_{r+1}$ of the divisor $\{y_{r+1} = 0\} \subset Z$ under the blow-up morphism $Y \to Z$.
    Indeed, the divisors $D_0$ and $D_{r+1}$ in $Y$ map via $f$ onto $\CP^n$ and the generic fibres $D_{0,\eta}$ and $D_{r+1,\eta}$ of $\left.f\right|_{D_i}$ are the hypersurfaces in (different) $\CP^r_{k(\CP^n)}$ given by
    $$
    \begin{aligned}
        &\sum\limits_{j = 1}^{r} c_j(x_1,\dots,x_n) y_j^m + (-1)^n x_1 x_2 \dots x_n y_{r+1}^m =0 \quad \text{and} \\ 
        &\pi \cdot  \left(1+\sum\limits_{i=1}^n x_i^{\left\lceil \frac{n+1}{m}\right\rceil}\right)^m y_0^m - (-1)^n x_1 x_2 \cdots x_n y_0^m + \sum\limits_{j = 1}^{r} c_j(x_1,\dots,x_n) y_j^m =0, 
    \end{aligned}
    $$
    respectively, see \eqref{eq:induction-start-first-resolution} and \eqref{eq:gfromSch-torsion}. Thus $D_{0,\eta}$ and $D_{r+1,\eta}$ are each isomorphic as $k(\CP^n)$-varieties to a subvariety of the hypersurface given by the vanishing of the $n$-th Fermat-Pfister form of degree $m$
    $$
        \sum\limits_{\varepsilon \in \{0,1\}^n} (-x_1)^{\varepsilon_1} \dots (-x_n)^{\varepsilon_n} y^m_{j(\varepsilon)}
    $$
    with $j(\varepsilon) = \sum\limits_{i=0}^n \varepsilon_i 2^{i-1}$ as defined in \cite[Equation (7)]{Sch-torsion}. The vanishing of $f^\ast \alpha$ along the generic points of $D_0$ and $D_{r+1}$ follows from \cite[Corollary 4.3]{Sch-torsion}. 
Since the generic fibre of $f$ is smooth ($m$ is invertible in $k$), we conclude via \cite{BO} that 
\begin{align} \label{eq:vanishing-alpha}
((\tau')^\ast \gamma)|_E=0\in H^n(k(E),\mu_m^{\otimes n})
\end{align} 
for any subvariety $E\subset Y'$ with 
$\tau (E)\subset W_Z = Z^{\sing}\cup \{ y_{r+1} l=0\}$, where $l\in k[x_0,\dots ,x_n]$ is any nontrivial homogeneous polynomial in $x_0,\dots ,x_n$ as in the statement of the theorem.

A computation with the Merkurjev pairing similar to  \cite[\S 3]{Sch-JAMS} or \cite[Theorem 8.6]{Sch-survey} then shows $\Tor^{\Z/m}(Z,W_Z)=m$; we sketch the argument for convenience.
For a contradiction assume that there is a positive integer $m'< m$ with
$$
m'\cdot \delta_{Z}=z+m\cdot \zeta \in \CH_0(Z_{k(Z)}) 
$$
for some zero-cycle $z$ whose support $\supp z$ lies in $(W_Z)_{k(Z)}$ and some zero-cycle $\zeta\in \CH_0(Z_{k(Z)})$ that reflects the fact that we work with $\Z/m$-torsion orders.
We restrict the above identity to the regular locus of $Z_{k(Z)}$ and pull this pack to $\tau^{-1}(Z^{\sm}_{k(Z)})$.
The localization sequence then yields
$$
m'\cdot \delta_\tau=z'+m\cdot \zeta'\in \CH_0(Y'_{k(Z)}) , 
$$
where $\delta_\tau=\tau^\ast \delta_Z$ is the point induced by the graph of $\tau:Y'\to Z$, $z'$ is a zero-cycle with $\supp z'\subset \tau^{-1}(W_Z)_{k(Z)}$ and $\zeta'\in \CH_0(Y'_{k(Z)}) $. Note that this used $Z^{\sing}\subset W_Z$.
We pair the above zero-cycle via the Merkurjev pairing (see \cite[\S 2.4]{merkurjev} or \cite[\S 5]{Sch-survey}) with the unramified class $\gamma$ from above.
This yields
$$
\langle m'\cdot \delta_\tau, \tau^\ast \gamma \rangle =0 ,
$$
because $\gamma$ is $m$-torsion, hence pairs to zero with $m\cdot \zeta'$, and it restricts to zero on generic points of subvarieties of $\tau^{-1}(W_Z )$ by \eqref{eq:vanishing-alpha}, hence pairs to zero with $z'$.
Conversely, the definition of the Merkurjev pairing directly implies that 
\begin{align*}
0&=\langle m'\cdot \delta_\tau, \tau^\ast \gamma \rangle= m' \cdot \tau_\ast \tau^\ast \gamma=m'\cdot \deg(\tau)\cdot \gamma \in H^n(k(Z),\mu_m^{\otimes n}),
\end{align*}
 as  $\delta_\tau\in Y'_{k(Z)}$ is the point associated to the graph $\Gamma_\tau\subset Y'\times Z$ of $\tau$. 
This contradicts the fact that $ \deg(\tau)$ is coprime to $m$, that $1\leq m'<m$, and that $\gamma$ has order $m$.
This concludes the proof of the theorem.
\end{proof}

\section{Proof of the main results}

Let $k$ be an algebraically closed field and let $X\subset \CP^{N+1}_k$ be a smooth Fano hypersurface, i.e.\ a smooth hypersurface of degree $d\leq N+1$.
Then $\deg \colon \CH_0(X)\to \Z$ is an isomorphism and so the torsion order $\Tor(X)$ of $X$ is the torsion order $\Tor^\Z(X,W)$ of $X$ relative to any closed zero-dimensional subset $W \subset X$, see Lemma \ref{lem:torsion-order-observations} \eqref{item:4:torsion-order-observations}.
The main results of this paper, stated in the introduction, will follow from the following result. 

\begin{theorem}\label{thm:main-torsion}
    Let $k$ be a field and let $m \geq 2$ be an integer invertible in $k$. 
    Let $n \geq 2$, $r \leq 2^n-2$, and let
    $$   
    s \leq \sum\limits_{l = 1}^n \binom{n}{l} \left\lfloor \frac{n-l}{m} \right\rfloor
    $$
    be non-negative integers. Write $N\coloneq  n+r+s$.
    Then the torsion order $\Tor(X_d)$ of a very general Fano hypersurface $X_d \subset \CP^{N+1}_k$ of degree $d \geq m + n$ is divisible by $m$.
\end{theorem}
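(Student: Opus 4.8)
The plan is to construct, over a suitable algebraically closed field, a (very singular) hypersurface $Y\subset\CP^{N+1}$ of degree $d$ and dimension $N$ together with a closed subset $W\subset Y$ of the form $\{x_0\cdot\ell=0\}$ with $Y^{\sing}\subset W$, such that $\Tor^{\Z/m}(Y,W)=m$, and then to transfer this to the very general smooth hypersurface $X_d$. For the transfer, note that the base change $\bar X_d$ of $X_d$ to an algebraic closure is again a very general hypersurface, hence (being Fano) satisfies $\CH_0(\bar X_d)\cong\Z$; by Lemma \ref{lem:very-general} the geometric generic fibre of the universal family degenerates, after a base change, to $Y$, so $m=\Tor^{\Z/m}(Y,W)\mid\Tor^{\Z/m}(\bar X_d,W_{\bar X_d})$ by Lemma \ref{lem:Lambda-torsion-order-specialize} (applied to a model of the degeneration with $W_{\mathcal X}$ defined by the same equations as $W$). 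Choosing a closed point $p\in W_{\bar X_d}$ and using Lemma \ref{lem:torsion-order-observations}\eqref{item:torsion-order-Z/m-Z}, \eqref{item:torsion-order-decrease-for-larger-sets}, \eqref{item:4:torsion-order-observations} we obtain $m\mid\Tor^{\Z/m}(\bar X_d,\{p\})\mid\Tor^{\Z}(\bar X_d,\{p\})=\Tor(\bar X_d)\mid\Tor(X_d)$. Thus it suffices to produce the pair $(Y,W)$.

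\textbf{The induction.} Put $r':=\min(r+s,\,2^n-2)$ and $s':=N-n-r'=\max(r+s-2^n+2,\,0)$, so that $0\le s'\le s$ and $r'\le 2^n-2$. The base of the induction is the degree $d$ hypersurface of dimension $n+r'$ furnished by Corollary \ref{cor:Induction-Start-3} (or Theorem \ref{thm:InductionStart} when $d=m+n$): take $l'=x_0^{d-m-n}$, the polynomial $F$ defining \eqref{eq:Z-Sch-torsion} built on the coordinates $x_0,\dots,x_n,y_1,\dots,y_{r'+1}$, and $h$ a general irreducible form of degree $d$; this gives $Z_\rho=\{\rho h+x_0^{d-m-n}F=0\}$ together with $W^{(0)}=\{x_0h_0=0\}\cup Z_\rho^{\sing}$ and $\Tor^{\Z/m}(\overline{Z_\rho},W^{(0)})=m$, and $Z_\rho$ can be written in the double cone form $\{f+\sum_{i=0}^{m}a_ix_0^iy_j^i=0\}$ of Section \ref{section:doublecone} with respect to any $y_j$ whose coefficient monomial has degree $\le n-m$ (in particular with $f+a_0$ irreducible, thanks to the $\rho h$ term). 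One then applies Proposition \ref{prop:inductionstep} a total of $s'$ times. Each application takes the current degree $d$ hypersurface of dimension $n+r'+t$, written in double cone form with respect to some $y_j$ (so $l=m\ge 2$, and $2m\le d$ since $y_j$ is used only when $n-\ell_j\ge m$), and replaces it by the degree $d$ hypersurface $X'$ of Lemma \ref{lem:genericfibre} of dimension $n+r'+t+1$; Proposition \ref{prop:inductionstep} yields $m=\Tor^{\Z/m}(\cdot,W^{(t)})\mid\Tor^{\Z/m}(\bar X',W^{(t+1)})$, and $W^{(t+1)}=\{x_0z_{s+1}h=0\}$ again has the required shape and contains the singular locus by Corollary \ref{cor:singX'}. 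After $s'$ steps one arrives at $(Y,W)$ in dimension $n+r'+s'=N$; at each step the working field $\overline{k_0(\lambda)}$ of Section \ref{section:doublecone} is enlarged (by the parameters $\lambda,t$), so one introduces a fresh pair of auxiliary transcendentals each time.

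\textbf{The budget.} It remains to verify that the $s'$ steps can be performed without running out of usable variables. A variable $y_j$ enters with coefficient a square-free monomial of degree $\ell_j\in\{1,\dots,n\}$ (with $\ell_{r'+1}=n$ for the last one), so in double cone form its top coefficient is $a_m=x_0^{\,n-m-\ell_j}c_j$, which is legitimate precisely when $n-\ell_j\ge m$. By Lemma \ref{lem:genericfibre}\eqref{item:lem:genericfibre2} the divisibility $x_0^{im}\mid a_i$ passes to the $a_i'$, so after each use of $y_j$ one divides the new coefficients by $x_0^i$ and re-enters the construction with the $x_0$-order of the top coefficient dropped by $m$; hence $y_j$ supports $\lfloor(n-\ell_j)/m\rfloor$ steps (at least that many when $d>m+n$). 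When $r'=2^n-2$ the coefficient monomials exhaust all square-free monomials of positive degree, so the number of available steps is $\sum_{l=1}^{n}\binom nl\lfloor(n-l)/m\rfloor\ge s\ge s'$; when $r'=r+s<2^n-2$ one has $s'=0$ and nothing is needed. Parts \eqref{item:lem:genericfibre3} and \eqref{item:lem:genericfibre4} of Lemma \ref{lem:genericfibre} ensure that after each step the equation is still in double cone form with respect to the remaining variables (the active variable is absent from the coefficients and $f+a_0$ stays irreducible), so the induction closes up.

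\textbf{Main obstacle.} The technical heart is the bookkeeping across the induction: one has to keep the defining equation in the precise shape demanded in Section \ref{section:doublecone} — absence of the active variable from every $a_i$, irreducibility of $f+a_0$, and especially divisibility of the $a_i$ by the correct power of $x_0$ — through all $s'$ iterations and through the growing tower of base fields and auxiliary parameters, and to match the closed subsets $W^{(t)}$ at each stage so that Proposition \ref{prop:inductionstep} and the specialization lemmas apply verbatim. Once this is in place, the numerical bound reduces to the elementary identity $\sum_{l=1}^{n}\binom nl\lfloor(n-l)/m\rfloor=\sum_{j=1}^{2^{n}-1}\lfloor(n-\ell_j)/m\rfloor$.
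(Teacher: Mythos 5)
Your proof is correct and takes essentially the same approach as the paper: a base case from Corollary \ref{cor:Induction-Start-3} followed by iterated applications of the double-cone induction step (Proposition \ref{prop:inductionstep}), with the budget tracked via the $x_0$-divisibility exponents $e_j$ and transferred to the very general hypersurface by Lemma \ref{lem:Lambda-torsion-order-specialize} and Lemma \ref{lem:torsion-order-observations}. The only cosmetic differences are that you make explicit the rewriting $r'=\min(r+s,2^n-2)$, $s'=N-n-r'$ (which the paper handles implicitly with ``this sum becomes maximal when $r$ is maximal''), you use the conservative per-variable budget $\lfloor (n-\ell_j)/m\rfloor$ rather than the sharper $\lfloor (d-m-\ell_j)/m\rfloor$ used in the paper, and you shrink $W_{\bar X_d}$ to a closed point a posteriori rather than cutting $W$ in the total space down to relative dimension zero by hyperplane sections as the paper does; none of these affects the argument.
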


\begin{remark}
    For $s=0$, the result is proven in \cite[Theorem 7.1]{Sch-torsion}.
\end{remark}

\begin{proof}[Proof of Theorem \ref{thm:main-torsion}]
Note that the torsion order of any variety is divisible by the torsion order of the base-change to any field extension.  
Moreover, the definition of very general (see Definition \ref{def:very-general}) is stable under extension of the base field.
Up to replacing $k$ by a field extension, we can therefore assume that $k$ is algebraically closed and uncountable. 

We fix positive integers $n \geq 2$ and $r \leq 2^{n} -2$. 
For a non-negative integer $s$ as in the theorem, we define inductively an integral degree $d$-hypersurface $Z = Z_s$ of dimension $N = n + r + s$. 

\textbf{Step 1.}
Suppose there exists an integral degree $d$-hypersurface $Z_s \subset \CP_k^{n+r+s+1}$ given by the vanishing of a homogeneous polynomial of the form
    \begin{equation}\label{eq:Zs}
        f_0^{(s)} + a_0^{(s)} +  a_{r+1}^{(s)} y_{r+1}^m + \sum\limits_{j=1}^{r} \sum\limits_{i=1}^m a_{i,j}^{(s)} \cdot y_j^i \in k[x_0,\dots,x_n,y_1,\dots,y_{r+1},z_1,\dots,z_s]
    \end{equation}
    for some homogeneous polynomials $$
        f_0^{(s)}, a_0^{(s)}, a_{r+1}^{(s)}, a_{i,j}^{(s)} \in k[x_0,\dots,x_n,z_1,\dots,z_s]
    $$
    such that \begin{enumerate}[(1)]
        \item $f_0^{(s)} + a_0^{(s)} \in k[x_0,\dots,x_n,z_1,\dots,z_s]$ is an irreducible polynomial of degree $d$; \label{item:PfTorsionOrder1} 
        \item for each $j$, if $x_0^{e m} \mid a_{m,j}^{(s)}$ for a non-negative integer $e$, then $x_0^{e i} \mid a_{i,j}^{(s)}$ for all $1 \leq i \leq m$. We denote the maximal such $e$ by $e_j^{(s)}$; \label{item:PfTorsionOrder2}
        \item $\Tor^{\Z/m}(Z_s,W_{s}) = m$, where $W_s \coloneq  \{x_0 a_{r+1}^{(s)} y_{r+1} h^{(s)} = 0\} \cup Z_s^{\sing} \subset Z_s$ for some homogeneous polynomial $h^{(s)} \in k[x_0,\dots,x_n,z_1,\dots,z_s]$. \label{item:PfTorsionOrder3}
    \end{enumerate}
    Assume that there exists some $1 \leq j_0 \leq r$ such that $e_{j_0}^{(s)} \geq 1$. 
    Then we construct an integral degree $d$-hypersurface $Z_{s+1}$ of the same form \eqref{eq:Zs} satisfying the condition \eqref{item:PfTorsionOrder1}, \eqref{item:PfTorsionOrder2}, and \eqref{item:PfTorsionOrder3} as follows. 
    
    Consider $k' = \overline{k(\lambda)}$ an algebraic closure of the purely transcendental field extension $k(\lambda)$ of $k$. 
    Rewrite the equation \eqref{eq:Zs} as
    $$
        \left(f_0^{(s)} + a_{r+1}^{(s)} y_{r+1}^m + \sum\limits_{j\neq j_0} \sum\limits_{i=1}^m a_{i,j}^{(s)} \cdot y_j^i\right) + a_0^{(s)} + \sum\limits_{i=1}^m a_{i,j_0}^{(s)} y_{j_0}^i.
    $$
    We are now in the situation of Section \ref{section:doublecone}.
    Note that condition \eqref{item:PfTorsionOrder1} implies the assumption \eqref{eq:conditionirreducible}.
    Let $Z_{s+1}$ be the integral degree $d$-hypersurface $X' \times_{K} \Bar{K} \subset \CP^{n+r+s+2}_{\Bar{K}}$, where $X'$ is as in Lemma \ref{lem:genericfibre} and $K = k'(t)$. We choose an isomorphism $k \cong \Bar{K}$, which exists because both fields are algebraically closed, have the same characteristic and have the same  uncountable transcendence degree over their prime fields.
    Thus we can view $Z_{s+1}$ as a variety over $k$. 
    We aim to check that $Z_{s+1}$ satisfies the assumptions above. Recall from Lemma \ref{lem:genericfibre} that $Z_{s+1}\subset \CP^{n+r+s+2}_k$ is cut out by the homogeneous polynomial
    \begin{equation} \label{eq:induction-step-polynomial}
        \left(f_0^{(s)} + a_{r+1}^{(s)} y_{r+1}^m + \sum\limits_{j\neq j_0} \sum\limits_{i=1}^m a^{(s)}_{i,j} \cdot y_j^i\right) + a^{(s+1)}_0 + \sum\limits_{i=1}^m a^{(s+1)}_{i,j_0} y_{j_0}^i,
    \end{equation}
    where $a^{(s+1)}_0,a^{(s+1)}_{i,j_0}\in k[x_0,\dots,x_n,z_1,\dots,z_{s+1}]$ are defined as in \eqref{eq:aiprime}. 
    Note this uses also Lemma \ref{lem:genericfibre} \eqref{item:lem:genericfibre3}.
    Hence, the defining equation of $Z_{s+1}$ has the form \eqref{eq:Zs} with
    $$
    f_0^{(s+1)} \coloneq  f_0^{(s)}, \quad a_{r+1}^{(s+1)} \coloneq  a_{r+1}^{(s)}, \quad \text{and} \quad a^{(s+1)}_{i,j}\coloneq a^{(s)}_{i,j}\ \ \ \text{for $j\neq j_0$}.
    $$ 
    Since $f_{0}^{(s+1)}$ and $a_0^{(s+1)}$ do not contain any $y_i$, condition \eqref{item:PfTorsionOrder1} follows from Lemma \ref{lem:genericfibre} \eqref{item:lem:genericfibre4}.
    As $a_{i,j}^{(s+1)} = a_{i,j}^{(s)}$ for $j \neq j_0$, condition \eqref{item:PfTorsionOrder2} is clearly satisfied for $j \neq j_0$ and we note that $e_j^{(s+1)} = e_j^{(s)}$ for $j \neq j_0$. For $j = j_0$, condition \eqref{item:PfTorsionOrder2} follows from Lemma \ref{lem:genericfibre} \eqref{item:lem:genericfibre2}. 
    Proposition \ref{prop:inductionstep} shows that $\Tor^{\Z/m}(Z_{s+1},W_{s+1}) = m$, where $W_{s+1} = \{x_0 a_{r+1}^{(s)} y_{r+1} h^{(s)} z_{s+1} = 0\} \subset Z_{s+1}$.  
    Note that the derivative of \eqref{eq:induction-step-polynomial} with respect to $y_{r+1}$ is equal to $m a_{r+1}^{(s)} y_{r+1}^{m-1}$;
    thus the singular locus of $Z_{s+1}$ is contained in $\{a_{r+1}^{(s)} y_{r+1}=0\} \subset Z_{s+1}$.
    By definition, $a_{r+1}^{(s+1)} = a_{r+1}^{(s)}$ and so condition \eqref{item:PfTorsionOrder3} is satisfied for $h^{(s+1)} = h^{(s)} z_{s+1}$. 

\textbf{Step 2.}
    Consider the hypersurface $Z_0 \coloneq  Z_\rho$ with defining equation $\rho h + x_0^{d-m-n} F$ as in Corollary \ref{cor:Induction-Start-3}, where $h \in k[x_0,\dots,x_n]$ is an irreducible polynomial of degree $d$, e.g.
    $$
        h = \begin{cases}
            x_0^d + \sum\limits_{i = 1}^n x_{i-1} x_i^{d-1} & \text{if } p > 0 \text{ and } p \mid d, \\
            \sum\limits_{i = 0}^n x_i^d & \text{otherwise,}
        \end{cases}
    $$
    where $p$ is the characteristic of $k$.
    We will prove that $Z_0$ satisfies the condition \eqref{item:PfTorsionOrder1}, \eqref{item:PfTorsionOrder2}, and \eqref{item:PfTorsionOrder3} above. 

    Consider the following polynomials in $k[x_0,\dots,x_n]$ 
    \begin{align*}
        f_0^{(0)} &\coloneq  \rho h + x_0^{d-\deg(g)} g, 
       \quad   a_0^{(0)} \coloneq  0, \quad
         a_{r+1}^{(0)}  \coloneq  (-1)^n x_0^{d-m-n} x_1 x_2 \dots x_n, \\ 
        a_{i,j}^{(0)} &\coloneq  0, \quad \text{for } 1 \leq i \leq m-1 \text{ and } 1 \leq j \leq r, \\
        a_{m,j}^{(0)} &\coloneq  x_0^{d-m-\deg(c_j)} c_j(x_1,\dots,x_n), \quad \text{for } 1 \leq j \leq r,
    \end{align*}
    where $g$ is defined in \eqref{eq:gfromSch-torsion} and the $c_j$'s are defined in \eqref{eq:FermatPfister}. Then, by construction, 
    $$
        \rho h + x_0^{d-m-n} F = f_0^{(0)} + a_0^{(0)} +  a_{r+1}^{(s)} y_{r+1}^m + \sum\limits_{j=1}^r \sum\limits_{i=1}^m a_{i,j}^{(0)} \cdot y_j^i \in k[x_0,\dots,x_n,y_1,\dots,y_{r+1}]
    $$
    is of the form \eqref{eq:Zs}. 
    The polynomial $f_0^{(0)}$ is irreducible because the polynomial $h$ is irreducible and $\rho$ is a transcendental parameter over the prime field of $k$, which is algebraically independent from $\pi$.
    Hence condition \eqref{item:PfTorsionOrder1} holds.
    Condition \eqref{item:PfTorsionOrder2} is clearly satisfied as $a_{i,j}^{(0)} = 0$ for $1 \leq i \leq m-1$. 
    In particular, we see from the definition of $a_{m,j}^{(0)}$ above that \begin{equation}
        e_j^{(0)} = \left\lfloor \frac{d-m-\deg(c_j)}{m} \right\rfloor.
    \end{equation}
    Corollary \ref{cor:Induction-Start-3} shows that condition \eqref{item:PfTorsionOrder3} holds as well, where we note that we can choose $h^{(0)} = 1$.
    This concludes Step 2.

By Steps 1 and 2 above, we can apply the double cone construction (see Section \ref{section:doublecone}) as long as at least one of the $e_j$'s defined in \eqref{item:PfTorsionOrder2} is positive. In each step, we reduce one of them by $1$. Hence, the number of steps is equal to the sum
    $$
        \sum\limits_{j=1}^{r} e_j^{(0)} = \sum\limits_{j=1}^{r} \left\lfloor \frac{d-m-\deg(c_j)}{m} \right\rfloor.
    $$
    This sum becomes maximal when $r$ is maximal, i.e. $r = 2^n -2$. Then the sum reads
    \begin{equation*}
        \sum\limits_{j=1}^{2^n-2} e_j^{(0)} = \sum\limits_{l=1}^{n-1} \binom{n}{l} \left\lfloor \frac{d-m-l}{m} \right\rfloor.
    \end{equation*}

    Let now $X_d$ be a very general hypersurface of degree $d$ and dimension $N$ over $k$.
    Up to replacing $k$ by a larger algebraically closed field (which does not affect the torsion order by Lemma \ref{lem:torsion-order-observations}), we can by Lemma \ref{lem:very-general} assume that $X_d$ degenerates to $Z_s$. 
    By choosing $N$ general hyperplane sections through a closed point of $W_s$, we can assume that there exists a closed subset $W$ in the total space of the degeneration which has relative dimension $0$ and whose restriction to the special fibre $Z_s$ is contained in $W_s$. 
    Applying Lemma \ref{lem:Lambda-torsion-order-specialize} to the degeneration with $W$ as closed subset yields that
    $$
        \Tor^{\Z/m}(X_d,W_{X_d}) = m,
    $$
    where $W_{X_d} \coloneq  W \cap X_d \subset X_d$ is a closed non-empty zero-dimensional subset of $X_d$. Thus, Lemma \ref{lem:torsion-order-observations} implies that $m$ divides $\Tor(X_d)$, which finishes the proof of the theorem.
\end{proof}

The following lemmas yield explicit estimates for the bound given in Theorem \ref{thm:main-torsion}.

\begin{lemma}\label{lem:bound}
    Let $n,m \geq 2$ be positive integers. Then
    \begin{equation}\label{eq:estimate}
         \left(\left\lfloor\frac{n}{m}\right\rfloor - 1 \right) (2^{n-1}-1) \leq \sum\limits_{l=1}^n \binom{n}{l} \left\lfloor \frac{n-l}{m} \right\rfloor \leq \left\lfloor\frac{n}{m}\right\rfloor (2^{n-1}-1).
    \end{equation}
\end{lemma}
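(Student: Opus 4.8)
The plan is to \emph{symmetrize} the sum via the substitution $l\mapsto n-l$ and then apply the elementary two-sided estimate $\lfloor a+b\rfloor-1\le\lfloor a\rfloor+\lfloor b\rfloor\le\lfloor a+b\rfloor$, valid for all real numbers $a,b$. The right inequality is standard; the left one follows because $\lfloor a\rfloor+\lfloor b\rfloor>(a-1)+(b-1)=(a+b)-2\ge\lfloor a+b\rfloor-2$ and both sides are integers.

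First I would note that the summand for $l=n$ is $\binom{n}{n}\lfloor 0/m\rfloor=0$, so the middle term of \eqref{eq:estimate} equals $T:=\sum_{l=1}^{n-1}\binom{n}{l}\bigl\lfloor\tfrac{n-l}{m}\bigr\rfloor$. Re-indexing by $l\mapsto n-l$ and using $\binom{n}{l}=\binom{n}{n-l}$ gives $T=\sum_{l=1}^{n-1}\binom{n}{l}\bigl\lfloor\tfrac{l}{m}\bigr\rfloor$ as well, so adding the two expressions yields
$$
2T=\sum_{l=1}^{n-1}\binom{n}{l}\left(\Bigl\lfloor\tfrac{n-l}{m}\Bigr\rfloor+\Bigl\lfloor\tfrac{l}{m}\Bigr\rfloor\right).
$$

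Now apply the estimate above with $a=\tfrac{n-l}{m}$, $b=\tfrac{l}{m}$, so that $a+b=\tfrac{n}{m}$: this gives $\lfloor\tfrac{n}{m}\rfloor-1\le\lfloor\tfrac{n-l}{m}\rfloor+\lfloor\tfrac{l}{m}\rfloor\le\lfloor\tfrac{n}{m}\rfloor$ for each $l$. Since $\sum_{l=1}^{n-1}\binom{n}{l}=2^n-2$, summing these inequalities against the (nonnegative) coefficients $\binom{n}{l}$ yields
$$
\bigl(2^n-2\bigr)\left(\Bigl\lfloor\tfrac{n}{m}\Bigr\rfloor-1\right)\le 2T\le\bigl(2^n-2\bigr)\Bigl\lfloor\tfrac{n}{m}\Bigr\rfloor,
$$
and dividing by $2$ gives exactly \eqref{eq:estimate}. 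There is no real obstacle here; the only points requiring a moment of care are the vanishing of the $l=n$ term and the verification of the left-hand floor inequality, both of which are elementary.
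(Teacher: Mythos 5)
Your proof is correct and follows essentially the same route as the paper: symmetrize the sum via $l \mapsto n-l$ to obtain $\tfrac{1}{2}\sum_{l=1}^{n-1}\binom{n}{l}\bigl(\lfloor\tfrac{n-l}{m}\rfloor + \lfloor\tfrac{l}{m}\rfloor\bigr)$, bound the inner expression by $\lfloor\tfrac{n}{m}\rfloor - 1 \le \cdot \le \lfloor\tfrac{n}{m}\rfloor$, and sum against $\sum\binom{n}{l} = 2^n - 2$. The only difference is that you spell out the justification of the two-sided floor inequality, which the paper states without proof.
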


\begin{proof}
The sum in question can be rewritten as follows
    \begin{align*}
        \sum\limits_{l=1}^n \binom{n}{l} \left\lfloor \frac{n-l}{m} \right\rfloor &= \frac{1}{2} \sum\limits_{l=1}^{n-1} \binom{n}{l} \left\lfloor \frac{n-l}{m} \right\rfloor + \frac{1}{2} \sum\limits_{l=1}^{n-1} \binom{n}{n-l} \left\lfloor \frac{n-l}{m} \right\rfloor \\
        &= \frac{1}{2} \sum\limits_{l=1}^{n-1} \binom{n}{l} \left\lfloor \frac{n-l}{m} \right\rfloor + \frac{1}{2} \sum\limits_{l=1}^{n-1} \binom{n}{l} \left\lfloor \frac{l}{m} \right\rfloor \\
        &= \frac{1}{2} \sum\limits_{l=1}^{n-1} \binom{n}{l} \left(\left\lfloor \frac{n-l}{m} \right\rfloor + \left\lfloor \frac{l}{m} \right\rfloor\right).
    \end{align*}
    The estimates in \eqref{eq:estimate} follow now from the observation $$
        \left(\left\lfloor\frac{n}{m}\right\rfloor - 1 \right) \leq \left(\left\lfloor \frac{n-l}{m} \right\rfloor + \left\lfloor \frac{l}{m} \right\rfloor\right) \leq \left\lfloor \frac{n}{m} \right\rfloor
    $$
    for all $0 \leq l \leq n$ together with the summation formula for binomial coefficients.
\end{proof}

We provide more explicit formulas for $m = 2$ and $m = 3$. These bounds are used
in Theorem \ref{thm:main-irrationality-intro} and Theorem \ref{thm:main-irrationality-intro-char2}.

\begin{lemma}\label{lem:bound-for2and3}
    Let $n$ be a positive integer. Then the following formulas hold
    \begin{align}
        \sum\limits_{l=1}^n \binom{n}{l} \left\lfloor \frac{n-l}{2} \right\rfloor &= (n-1) 2^{n-2} - \left\lfloor \frac{n}{2} \right\rfloor, \label{eq:bound-2} \\
        \sum\limits_{l=1}^n \binom{n}{l} \left\lfloor \frac{n-l}{3} \right\rfloor &= \frac{n-2}{3} 2^{n-1} - \frac{n}{3} + \delta, \label{eq:bound-3}
    \end{align}
    where $\delta$ depends on the remainder of $n$ modulo $6$ and is given by the following table.
    $$
        \begin{array}{l|c|c|c|c|c|c}
            n \mod 6& 0 & 1 & 2 & 3 & 4 & 5  \\ \hline
            \delta & \frac{1}{3} & \frac{2}{3} & \frac{2}{3} & -\frac{1}{3} & 0 & \frac{2}{3}.
        \end{array}
    $$
\end{lemma}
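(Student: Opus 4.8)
The plan is to reduce both identities to elementary binomial sums by splitting the floor function into a linear and a fractional part. Writing $\rho_m(a)\in\{0,\dots,m-1\}$ for the residue of $a$ modulo $m$, we have $\left\lfloor\frac{n-l}{m}\right\rfloor=\frac{n-l}{m}-\frac{\rho_m(n-l)}{m}$, hence
\[
\sum_{l=1}^{n}\binom{n}{l}\left\lfloor\frac{n-l}{m}\right\rfloor
=\frac1m\sum_{l=1}^{n}\binom{n}{l}(n-l)-\frac1m\sum_{l=1}^{n}\binom{n}{l}\rho_m(n-l).
\]
The first sum is computed by the substitution $j=n-l$ together with $\sum_{j=0}^{n}j\binom{n}{j}=n2^{n-1}$, which gives $\sum_{l=1}^{n}\binom{n}{l}(n-l)=n2^{n-1}-n$ (the $l=0$ summand equals $n$, the $l=n$ summand vanishes). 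Thus everything reduces to evaluating $T_m(n):=\sum_{j=0}^{n}\binom{n}{j}\rho_m(j)$ and then subtracting the $l=0$ contribution $\rho_m(n)$.

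For $m=2$ this is immediate: $\rho_2(j)=1$ precisely when $j$ is odd, so $T_2(n)=\sum_{j\ \mathrm{odd}}\binom{n}{j}=2^{n-1}$ for $n\ge1$. Plugging in and distinguishing the parity of $n$ (which determines $\rho_2(n)\in\{0,1\}$), a short simplification shows that the two cases combine into the single closed form $(n-1)2^{n-2}-\left\lfloor\frac{n}{2}\right\rfloor$, which is \eqref{eq:bound-2}.

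For $m=3$ I would use the roots-of-unity filter. Letting $\omega=e^{2\pi i/3}$, one checks on $j=0,1,2$ the identity $\rho_3(j)=1+\tfrac13(\omega-1)\omega^{j}+\tfrac13(\omega^2-1)\omega^{2j}$ (both sides being $3$-periodic in $j$, it then holds for all integers $j$), whence
\[
T_3(n)=2^{n}+\frac{\omega-1}{3}(1+\omega)^{n}+\frac{\omega^2-1}{3}(1+\omega^2)^{n}.
\]
Using $1+\omega=-\omega^2$ and $1+\omega^2=-\omega$ turns this into $T_3(n)=2^{n}+\tfrac{(-1)^n}{3}\big((\omega-1)\omega^{2n}+(\omega^2-1)\omega^{n}\big)$; since the sign $(-1)^n$ has period $2$ and the $\omega$-powers have period $3$, the correction term depends only on $n\bmod 6$, and a direct case check over the six residues gives $T_3(n)=2^n+\varepsilon(n)$ with $(\varepsilon(0),\dots,\varepsilon(5))=(-1,-1,0,1,1,0)$. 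Substituting back yields
\[
\sum_{l=1}^{n}\binom{n}{l}\left\lfloor\frac{n-l}{3}\right\rfloor=\frac{n-2}{3}2^{n-1}-\frac{n}{3}+\frac{\rho_3(n)-\varepsilon(n)}{3},
\]
and tabulating $\delta:=\frac{\rho_3(n)-\varepsilon(n)}{3}$ over $n\bmod6$ reproduces the asserted table, proving \eqref{eq:bound-3}. The one point requiring care is the bookkeeping in the $m=3$ case: the interaction of $(-1)^n$ with the powers $\omega^{n},\omega^{2n}$ is exactly what forces period $6$ rather than period $3$; apart from that, all the computations are routine.
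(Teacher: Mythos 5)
Your proof is correct and takes essentially the same approach as the paper: both arguments split $\left\lfloor\frac{n-l}{m}\right\rfloor$ into the linear part $\frac{n-l}{m}$ and a correction depending on $n-l$ modulo $m$, evaluate the linear part by the standard identity $\sum_j j\binom{n}{j}=n2^{n-1}$, and handle the $m=3$ correction with a roots-of-unity filter before tabulating over $n\bmod 6$. The only cosmetic difference is that you package the correction as a single quantity $T_m(n)=\sum_j\binom{n}{j}\rho_m(j)$ with $\rho_3(j)$ written explicitly in terms of $\omega^j,\omega^{2j}$, whereas the paper computes the two lacunary sums $\sum_{l\equiv r\,(3)}\binom{n}{l}$ separately; the arithmetic is otherwise identical.
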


\begin{proof}
    We check \eqref{eq:bound-2} by the following computation:
    \begin{align*}
        \sum\limits_{l=1}^n \binom{n}{l} \left\lfloor \frac{n-l}{2} \right\rfloor &= \sum\limits_{l=1}^n \frac{n-l}{2} \binom{n}{l} - \frac{1}{2}\sum\limits_{\substack{l=1 \\ n -l \text{ odd}}}^n \binom{n}{l} \\
        &= \frac{n}{2} (2^{n} - 1) - \frac{1}{2}\sum\limits_{l=1}^n l \binom{n}{l} - \frac{1}{2} \sum\limits_{\substack{l=1 \\ n-l \text{ odd}}}^n \left[\binom{n-1}{l-1} + \binom{n-1}{l}\right] \\
        &= \frac{n}{2} (2^{n} - 1) - n 2^{n-2} - 2^{n-2} + \left(\frac{1}{4} - (-1)^n \frac{1}{4}\right) \\
        &= (n-1)2^{n-2} - \left\lfloor \frac{n}{2} \right\rfloor.
    \end{align*}
    We turn to \eqref{eq:bound-3} and prove first a combinatorial formula for a lacunary sum of binomial coefficients, see e.g. \cite[Section 4, Problem 8, p.161]{Riordan}. Let $\xi$ be a primitive third root of unity in $\C$, then the following holds
    $$
        \begin{aligned}
            3 \sum\limits_{l \equiv r \ (3)} \binom{n}{l} &= \sum\limits_{l = 0}^n \binom{n}{l} (1 + \xi^{l-r} + \xi^{{2l-2r}}) \\
            &= 2^n + \xi^{-r}(1+\xi)^{n} + \xi^{-2r} (1+\xi^2)^n \\
             &= 2^n + (-1)^n (\xi^{n+r})^2 + (-1)^n\xi^{n+r}.
        \end{aligned}
    $$
    Write $n = 3a + b$ for integers $a \in \Z_{\geq 0}$ and $b \in \{0,1,2\}$. Then we get
    $$
    \begin{aligned}
        \sum\limits_{l=1}^n \binom{n}{l} \left\lfloor \frac{n-l}{3} \right\rfloor &= \sum\limits_{l=1}^n \frac{n-l}{3} \binom{n}{l} - \frac{1}{3}\sum\limits_{\substack{n-l \equiv 1 \ (3) \\ l \neq 0}} \binom{n}{l} - \frac{2}{3}\sum\limits_{\substack{n-l \equiv 2 \ (3) \\ l \neq 0}} \binom{n}{l}  \\
        &= \frac{n}{3} (2^{n-1}-1) - \frac{1}{3}\sum\limits_{l \equiv 1 \ (3)} \binom{n}{l} - \frac{2}{3}\sum\limits_{l \equiv 2 \ (3)} \binom{n}{l}  + \frac{b}{3}\\
        &= \frac{n-2}{3} 2^{n-1} - \frac{n}{3} + \frac{b}{3} - \frac{(-1)^n}{9} \left(\xi^{2b+2} + \xi^{b+1} + 2 \xi^{2b+1} + 2 \xi^{b+2}\right). \\
    \end{aligned}
    $$
    A simple computation shows that
    $$
        \frac{b}{3} - \frac{(-1)^n}{9} \left(\xi^{2b+2} + \xi^{b+1} + 2 \xi^{2b+1} + 2 \xi^{b+2}\right) = \delta,
    $$
    which proves \eqref{eq:bound-3} and thus the lemma.
\end{proof}

\begin{proof}[Proof of Theorem \ref{thm:main-irrationality-intro}]
    If $d = 4$, then the statement follows from \cite{totaro} for $N \leq 4$ (see also \cite[Theorem 1.1]{Sch-JAMS}) and \cite[Theorem 1.1]{PavicSch} for $N = 5$, see also
    Example \ref{ex:quartic5folds}.
    Let now $d \geq 5$ and $N \leq \frac{d+1}{16} 2^d$ be positive integers. If $3 \leq N \leq (d-2) + 2^{d-2}-2$, then the theorem follows from \cite[Theorem 1.1]{Sch-JAMS}, because we can then write $N$ uniquely as $N = n + r$ for integers $n,r \geq 1$ satisfying $2 \leq n \leq d-2$ and $2^{n-1}-2 \leq r \leq 2^n - 2$. Hence we can assume that $N \geq d-4+2^{d-2}$. Let $n = d-2$, $r = 2^{d-2} - 2$ and $s = N - n - r$ be non-negative integers. We claim that $$
        s \leq (d-3) 2^{d-4} - \left\lfloor \frac{d-2}{2} \right\rfloor.
    $$
    Indeed, otherwise we get
    $$
        N = n+r+s > d-4 + 2^{d-2} + (d-3) 2^{d-4} - \left\lfloor \frac{d-2}{2} \right\rfloor \geq \frac{d+1}{16}2^d,
    $$
    which yields a contradiction to the assumption $N \leq \frac{d+1}{16} 2^d$. Thus, Theorem \ref{thm:main-torsion} implies by Lemma \ref{lem:bound-for2and3} that $\Tor(X_d)$ is divisible by $2$ for a very general degree $d$ hypersurface $X_d \subset \CP^{N+1}$. In particular, $X_d$ does not admit an integral decomposition of the diagonal.
\end{proof}

\begin{proof}[Proof of Theorem \ref{thm:main-irrationality-intro-char2}]
    Let $d \geq 5$ and $3 \leq N \leq \frac{d+1}{48} 2^d$ be integers. If $3 \leq N \leq (d-3) + 2^{d-3} - 2$, then the theorem follows from \cite[Theorem 7.1]{Sch-torsion}, because we can write $N = n+r$ for unique positive integers $n,r$ satisfying $2 \leq n \leq d-3$ and $2^{n-1}-2 \leq r \leq 2^n -2$. Hence we can assume that $N \geq d-5 + 2^{d-3}$. Let $n = d-3$, $r = 2^{d-3}-2$ and $s = N -n-r$ be non-negative integers. We claim that $$
        s \leq \frac{d-5}{3} 2^{d-4} - \frac{d-3}{3} + \delta,
    $$
    where $\delta$ is defined as in Lemma \ref{lem:bound-for2and3}. (Note that $n = d-3$.) Indeed, otherwise we get
    $$
        N = n + r + s > d-3 + 2^{d-3}-2 + \frac{d-5}{3} 2^{d-4} - \frac{d-3}{3} + \delta = \frac{d+1}{48} 2^d + \frac{2d}{3} + \delta - 4 \geq \left\lfloor\frac{d+1}{48} 2^d\right\rfloor,
    $$
    which yields a contradiction to the assumption that $N$ is an integer satisfying $N \leq \frac{d+1}{48}2^d$. Thus, Theorem \ref{thm:main-torsion} implies by Lemma \ref{lem:bound-for2and3} that $3 \mid \Tor(X_d)$ for a very general degree $d$ hypersurface $X_d \subset \CP^{N+1}$. In particular, $X_d$ does not admit an integral decomposition of the diagonal.
\end{proof}

\begin{proof}[Proof of Theorem \ref{thm:main-torsion-order-intro}]
    This follows from Theorem \ref{thm:main-torsion} together with the estimate in Lemma \ref{lem:bound}.
\end{proof}

\section*{Acknowledgement}
The first named author is supported by the Studienstiftung des deutschen Volkes.
We are grateful to Simen Moe for discussions and to Jean-Louis Colliot-Thélène and the anonymous referee for comments that improved the exposition. 
This project has received funding from the European Research Council (ERC) under the European Union's Horizon 2020 research and innovation programme under grant agreement No 948066 (ERC-StG RationAlgic).


\end{document}